\newtheorem{theorem}{Theorem}
\newtheorem{lemma}[theorem]{Lemma}
\newtheorem{corollary}[theorem]{Corollary}
\newtheorem{proposition}[theorem]{Proposition}
\theoremstyle{definition}
\newtheorem{example}[theorem]{Example}
\newtheorem{examples}[theorem]{Examples}
\newtheorem{remark}[theorem]{Remark}
\newcommand{\remarkend}{\leavevmode\unskip\penalty9999 \hbox{}\nobreak\hfill
  \quad\hbox{\ensuremath{\Diamond}}}
\newcommand{\exampleend}{\leavevmode\unskip\penalty9999 \hbox{}\nobreak\hfill
  \quad\hbox{\ensuremath{\Diamond}}}
\newcommand{\examplesend}{\leavevmode\unskip\penalty9999 \hbox{}\nobreak\hfill
  \quad\hbox{\ensuremath{\Diamond}}}
\newcommand{\op}{\operatorname}
\newcommand{\ra}{\rightarrow}
\newcommand{\sra}{\twoheadrightarrow}
\newcommand{\hra}{\hookrightarrow}
\newcommand{\xra}[1]{\xrightarrow{#1}}
\newcommand{\sira}{\xra{\sim}}
\newcommand{\sila}{\overset{\sim}{\leftarrow}}
\newcommand{\ol}[1]{{\overline{#1}}}
\newcommand{\sar}{\ar@{->>}}
\newcommand{\iar}{\ar@{^{(}->}}
\newcommand{\gar}{\ar@{=}}
\newcommand{\Bl}[1]{{\mathbb{#1}}}
\newcommand{\DZ}{\Bl{Z}}
\newcommand{\DN}{\Bl{N}}
\newcommand{\DR}{\Bl{R}}
\newcommand{\DC}{\Bl{C}}
\newcommand{\DK}{{\Bl{K}}}
 \newcommand{\Hom}{\op{Hom}}
 \newcommand{\complexHom}{{\mkern3mu\mathcal{H}{\op{om}}\mkern3mu}}
\newcommand{\complexEnd}{{\mkern3mu\mathcal{E}{\op{nd}}\mkern3mu}}
\newcommand{\Ext}{\op{Ext}}
\newcommand{\Lotimes}{\overset{L}{\otimes}}
\newcommand{\Mod}{\op{Mod}}
\newcommand{\gmod}{\op{gmod}}
\newcommand{\gMod}{\op{gMod}}
\newcommand{\Mof}{\op{mod}}
\newcommand{\gMof}{\op{gmod}}
\newcommand{\gProjf}{\op{gproj}}
\newcommand{\Modover}[1]{{{#1}\text{-}\Mod}}
\newcommand{\Mofover}[1]{{{#1}\text{-}\Mof}}
\newcommand{\soc}{\op{soc}}
\newcommand{\End}{\op{End}}
\newcommand{\Mat}{\op{Mat}}
\newcommand{\length}{\lambda}
\newcommand{\leftadjointtores}{\op{prod}}
\newcommand{\pro}{\leftadjointtores}
\newcommand{\Gr}{\op{Gr}}
\newcommand{\Kern}{\op{ker}}
\newcommand{\Kokern}{\op{cok}}
\newcommand{\Bild}{\op{im}}
\newcommand{\bild}{\op{im}}
\newcommand{\supp}{\op{supp}}
\newcommand{\mar}{{\ar@{|->}}}
\newcommand{\dgDer}{\op{dgDer}}
\newcommand{\dgHot}{\op{dgHot}}
\newcommand{\dgHotproj}{{\op{dgHotp}}}
\newcommand{\dgMod}{\op{dgMod}}
\newcommand{\dgPer}{\op{dg\textbf{P}er}}
\newcommand{\satzdgPer}{\op{dg\emph{\textbf{P}}er}}
\newcommand{\dgPerDer}{{\op{dgPer}}}
\newcommand{\dgPrae}{\op{dg\textbf{P}rae}}
\newcommand{\satzdgPrae}{\op{dg\emph{\textbf{P}}rae}}
\newcommand{\dgPraeDer}{\op{dgPrae}}
\newcommand{\dgFilt}{\op{dg\textbf{F}ilt}}
\newcommand{\satzdgFilt}{\op{dg\emph{\textbf{F}}ilt}}
\newcommand{\dgFilMod}{\op{dgFlag}}
\newcommand{\dgFiltDer}{\op{dgFilt}}
\newcommand{\heart}{\heartsuit}
\newcommand{\tzmat}[4]{{\big[\begin{smallmatrix} {#1} & {#2} \\ {#3} & {#4} \end{smallmatrix}\big]}}
\newcommand{\svek}[2]{{\big[\begin{smallmatrix} {#1} \\ {#2} \end{smallmatrix}\big]}}
\renewcommand{\tilde}[1]{\widetilde{#1}}
\renewcommand{\hat}[1]{\widehat{#1}}
\newcommand{\comp}{\circ}
\newcommand{\opp}{{\op{op}}}
\newcommand{\define}[1]{{\textbf{#1}}}
\newcommand{\stress}[1]{{\textit{#1}}}
\begin{document}

\title[Perfect Derived Categories]{Perfect Derived Categories of\\
  Positively Graded DG Algebras}
\author{Olaf M.\ Schn{\"u}rer}
\address{Mathematisches Institut, Universit{\"a}t Bonn, 
Beringstra\ss{}e 1, D-53115 Bonn, Germany}
\email{olaf.schnuerer@math.uni-bonn.de}
\thanks{Supported by a grant of the state of Baden-W{\"u}rttemberg}

\date{October 2008, revised January 2010.}

\keywords{Differential Graded Module, DG Module, t-Structure, Heart,
  Koszul Duality.}

\subjclass[2000]{18E30, 16D90}

\begin{abstract}
  We investigate the perfect derived category $\dgPerDer(\mathcal{A})$ of a
  positively graded differential graded (dg) algebra $\mathcal{A}$ whose degree zero
  part is a dg 
  subalgebra and semisimple as a ring. 
  We introduce an equivalent subcategory of $\dgPerDer(\mathcal{A})$
  whose objects are easy to describe, define a t-structure on
  $\dgPerDer(\mathcal{A})$ and study its heart. We show that
  $\dgPerDer(\mathcal{A})$ is a Krull-Remak-Schmidt category.  
  Then we consider the heart in the case that $\mathcal{A}$ is a
  Koszul ring with differential zero satisfying some finiteness
  conditions. 
\end{abstract}

\maketitle

\setcounter{tocdepth}{1}
\tableofcontents

\section{Introduction}
\label{cha:introduction}

Let $k$ be a commutative ring and 
$\mathcal{A}=(A=\bigoplus_{i \in \DZ} A^i, d)$ 
a differential graded $k$-algebra (= dg algebra). 
Let $\dgDer(\mathcal{A})$ be the derived category of dg (right)
modules over $\mathcal{A}$ (= $\mathcal{A}$-modules), 
and $\dgPerDer(\mathcal{A})$ the perfect derived category,
i.\,e.\ the smallest strict full triangulated subcategory of
$\dgDer(\mathcal{A})$ containing $\mathcal{A}$ and closed under
forming direct summands; the objects of $\dgPerDer(\mathcal{A})$ are
precisely the compact objects of $\dgDer(\mathcal{A})$ (see
\cite{Keller-deriving-dg-cat, Keller-construction-of-triangle-equiv}). 

The aim of this article is to provide some description of
$\dgPerDer(\mathcal{A})$ and to define a t-structure on
this category if $\mathcal{A}=(A,d)$ is a dg algebra
satisfying the following conditions:
\begin{enumerate}[label={(P\arabic*)}]
\item 
\label{enum:intro-pg}
$A$ is positively graded, i.\,e.\ $A^i=0$ for $i < 0$;
\item 
\label{enum:intro-ss}
$A^0$ is a semisimple ring;
\item 
\label{enum:intro-sdga}
the differential of $\mathcal{A}$ vanishes on $A^0$, i.\,e.\ $d(A^0)=0$. 
\end{enumerate}
At the end of this introduction we 
explain our main motivation 
for studying such perfect derived categories.
The only related and in fact motivating description (with a definition
of a t-structure) we know of can be found
in \cite[11]{BL}; the dg algebra $(\DR[X_1, \dots, X_n], d=0)$
considered there is a polynomial algebra with generators in strictly
positive even degrees.

We give an account of the results of this article, always assuming
that $\mathcal{A}$ is a dg algebra satisfying the 
conditions \ref{enum:pg}-\ref{enum:sdga}. 

\subsection*{Alternative descriptions of the perfect derived category}
\label{sec:altern-descr-dgperd}

The semisimple ring $A^0$ has only a finite number
of non-isomorphic simple (right) modules $(L_x)_{x \in W}$.
We view $A^0$ as a dg subalgebra $\mathcal{A}^0$ of $\mathcal{A}$ and the
$L_x$ as $\mathcal{A}^0$-modules concentrated in degree zero.
Let $\dgPraeDer(\mathcal{A})$ be the smallest strict full triangulated
subcategory of $\dgDer(\mathcal{A})$ that contains all
$\mathcal{A}$-modules $\hat L_x:= L_x \otimes_{\mathcal{A}^0}
\mathcal{A}$. (The name $\dgPraeDer$ was chosen because this category seemed
to be a precursor of the perfect derived category: It is not required
to be closed under taking direct summands. But indeed it is closed
under this operation, cf.\ Theorem~\ref{t:describe}.)
Define $\dgFiltDer(\mathcal{A})$ to be the full subcategory of
$\dgDer(\mathcal{A})$ whose objects are $\mathcal{A}$-modules
$M$ admitting a finite filtration 
$0 = F_0(M) \subset F_1(M) \subset \dots \subset
F_n(M) =M$ by dg submodules with subquotients
$F_{i}(M)/F_{i-1}(M) \cong \{l_i\}\hat L_{x_i}$
for suitable $l_1 \geq l_2 \geq \dots \geq l_n$ and $x_i \in W$; here
$\{1\}$ denotes the shift functor.
We have inclusions $\dgFiltDer(\mathcal{A}) \subset
\dgPraeDer(\mathcal{A}) \subset \dgPerDer(\mathcal{A})$.

\begin{theorem}[{cf.\ Theorems \ref{t:filt-iso-prae} and \ref{t:t-cat-prae}}]
  \label{t:describe}
  \rule{0mm}{1mm}
  \begin{enumerate}[label={(\arabic*)}]
  \item 
    $\dgPraeDer(\mathcal{A})$ is closed under taking direct summands,
    i.\,e.\ 
    \begin{equation*}
      \dgPraeDer(\mathcal{A})=\dgPerDer(\mathcal{A}).
    \end{equation*}
  \item The inclusion
    $\dgFiltDer(\mathcal{A})\subset \dgPerDer(\mathcal{A})$ is an
    equivalence of categories.
  \end{enumerate}
\end{theorem}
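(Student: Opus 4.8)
The inclusion $\dgFiltDer(\mathcal{A})\subset\dgPerDer(\mathcal{A})$ is that of a full subcategory, hence automatically fully faithful, so the only issue is essential surjectivity; together with part~(1) it suffices to show that every object of $\dgPraeDer(\mathcal{A})$ is isomorphic in $\dgDer(\mathcal{A})$ to an object of $\dgFiltDer(\mathcal{A})$, and that $\dgPraeDer(\mathcal{A})$ is closed under direct summands. First I would record two consequences of \ref{enum:intro-ss} and \ref{enum:intro-sdga}. Since $A^0$ is semisimple, $\mathcal{A}^0\cong\bigoplus_{x\in W} L_x^{\,m_x}$ as a right $\mathcal{A}^0$-module for suitable $m_x\geq 1$; as the defining idempotents lie in $A^0$ and $d(A^0)=0$, applying $-\otimes_{\mathcal{A}^0}\mathcal{A}$ yields a decomposition $\mathcal{A}\cong\bigoplus_{x\in W}\hat L_x^{\,m_x}$ of dg $\mathcal{A}$-modules. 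Thus each $\hat L_x$ is a dg direct summand of $\mathcal{A}$, giving $\dgPraeDer(\mathcal{A})\subset\dgPerDer(\mathcal{A})$, while conversely $\mathcal{A}\in\dgPraeDer(\mathcal{A})$ as a finite direct sum of the generators. Hence part~(1) reduces \emph{exactly} to the idempotent completeness of $\dgPraeDer(\mathcal{A})$.

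The engine of both parts is the computation of morphisms between generators. By the adjunction between restriction and $-\otimes_{\mathcal{A}^0}\mathcal{A}$, and because $\mathcal{A}^0$ is a semisimple ring, $\Hom_{\dgDer(\mathcal{A})}(\hat L_x,\{l\}\hat L_y)$ is a subquotient of $\Hom_{\mathcal{A}^0}(L_x, L_y\otimes_{\mathcal{A}^0}A^{l})$; by positivity \ref{enum:intro-pg} this vanishes for $l<0$, and for $l=0$ it equals $\Hom_{\mathcal{A}^0}(L_x,L_y)$, a division ring if $x=y$ and $0$ otherwise. This non-negativity of the morphism grading is precisely what allows an \emph{ordered} Postnikov tower. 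Given $P\in\dgPraeDer(\mathcal{A})$, I would construct a minimal semifree model: a quasi-isomorphic dg module $P'$ whose underlying graded module is $\bigoplus_i\{l_i\}\hat L_{x_i}$ and whose differential, reduced modulo the graded radical $\bigoplus_{i\geq 1}A^i$, vanishes (minimality is available exactly because $A^0$ is semisimple and $d(A^0)=0$). Positivity then forces the differential to carry each generator into the span of generators of shift $\geq$ its own, so filtering by the value of the shift gives a finite filtration of $P'$ by dg submodules; refining within each shift into a suitable one-cell-at-a-time order yields single subquotients $\{l_i\}\hat L_{x_i}$, and reindexing so that $l_1\geq l_2\geq\dots\geq l_n$ exhibits $P'\in\dgFiltDer(\mathcal{A})$. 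This proves $\dgFiltDer(\mathcal{A})\subset\dgPraeDer(\mathcal{A})$ is essentially surjective, hence an equivalence.

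For idempotent completeness I would exploit the same shift-filtration on $\End_{\dgDer(\mathcal{A})}(P')$: the endomorphisms preserving it surject onto the endomorphisms of the associated graded $\bigoplus_i\{l_i\}\hat L_{x_i}$, a finite product of matrix rings over the division rings $\End_{\mathcal{A}^0}(L_x)$ and hence semisimple, while those strictly raising it form a two-sided ideal that is nilpotent by positivity and finiteness of the tower. Idempotents therefore lift from the semisimple quotient and split, so any idempotent of $P'$ in $\dgDer(\mathcal{A})$ is realized by a direct sum decomposition inside $\dgFiltDer(\mathcal{A})$. Consequently $\dgPraeDer(\mathcal{A})$ is idempotent complete; being a strict full triangulated subcategory that contains $\mathcal{A}$ and is closed under summands, it contains the smallest such, namely $\dgPerDer(\mathcal{A})$, and combined with the reverse inclusion from the first paragraph this gives part~(1). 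Part~(2) then follows by composing the equivalence of the second paragraph with the identification $\dgPraeDer(\mathcal{A})=\dgPerDer(\mathcal{A})$. The main obstacle is the construction in the second paragraph: proving that the minimal model exists, is \emph{finite} (so that $P$ is genuinely perfect rather than merely bounded), and can be straightened into a tower with single, decreasingly-ordered subquotients by removing the possible within-shift twisting of the differential. This straightening is where positivity prevents an infinite regress and semisimplicity guarantees minimality, and it is the technical core that Theorems~\ref{t:filt-iso-prae} and~\ref{t:t-cat-prae} must supply.
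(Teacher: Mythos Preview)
Your approach is genuinely different from the paper's on both fronts, and largely sound in outline, though one step in part~(1) needs more care.

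For part~(2), the paper (Theorem~\ref{t:filt-iso-prae}) does not build a minimal semifree model abstractly; instead it proves directly, by induction on $\lambda(\ol X)$ and the octahedral axiom, that the cone of any morphism between objects of $\dgFilt$ is again isomorphic in $\dgHot$ to an object of $\dgFilt$. The base case is an explicit analysis of a cone $C(f)$ with $f\colon\hat L_x\to Y$, locating an acyclic dg submodule $V\subset C(f)$ and showing $C(f)/V\in\dgFilt$. Your minimal-model route would reach the same conclusion, but the finiteness and the ordering of shifts that you flag as the ``main obstacle'' are exactly what this cone argument supplies; the paper's later Proposition~\ref{p:dgper-tfae} then identifies the objects of $\dgFilt$ with the homotopically minimal objects of $\dgPer$, confirming your picture after the fact.

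For part~(1), the paper avoids any direct analysis of endomorphism rings: it first establishes a \emph{bounded} t-structure $(\dgPrae^{\leq 0},\dgPrae^{\geq 0})$ on $\dgPrae$ (the truncation $M\mapsto M_{\leq 0}$ is visibly available on $\dgFilt$), and then invokes the Le--Chen theorem that any triangulated category carrying a bounded t-structure is Karoubian. Your argument is in the same spirit as the proof of that theorem, but as written it has a gap. The $\hat L$-filtration on $P'$ is not canonical---only the coarser filtration by the $P'_{\leq i}$ is---so arbitrary endomorphisms in $\dgDer$ need not preserve it, and there is no well-defined surjection from $\End_{\dgDer}(P')$ onto $\End(\bigoplus_i\{l_i\}\hat L_{x_i})$. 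Using the truncation filtration instead gives a nilpotent ideal whose quotient embeds in $\prod_i\End_\heart(H^i_\heart(P'))$, which is only semiprimary, not semisimple. More seriously, knowing that $\End_{\dgDer}(P')$ is semiperfect tells you idempotents split in $\dgDer$ (which is already idempotent complete), not that the summands lie in $\dgPrae$; to get a splitting in $\dgFilt$ you must lift the idempotent from $\End_{\dgHot}(P')$ to an honest idempotent in $\End_{\dgMod}(P')$, and you have not argued that the kernel of this surjection permits such a lift. This can be repaired (for instance via a Fitting-type argument as in the paper's Proposition~\ref{p:fitting-dgFilt}), but it is not automatic from the ring-theoretic structure alone.
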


The proof of this theorem relies on the existence of a bounded 
t-structure (as
described below) on $\dgPraeDer(\mathcal{A})$.

The objects of $\dgFiltDer(\mathcal{A})$ can be characterized as the
homotopically projective objects in $\dgPerDer(\mathcal{A})$ that are
homotopically minimal, cf.\ Proposition \ref{p:dgper-tfae}.
The equivalence $\dgFiltDer(\mathcal{A}) \subset
\dgPerDer(\mathcal{A})$ enables us to prove that $\dgPerDer(\mathcal{A})$
is a Krull-Remak-Schmidt category, cf.\ Proposition
\ref{p:krs-category}.

\subsection*{t-structure}
\label{sec:t-structure-2}
Let $\dgPerDer^{\leq 0}$ (and $\dgPerDer^{\geq 0}$) be the full
subcategories of $\dgPerDer(\mathcal{A})$ consisting of objects
$\mathcal{M}$ such that $H^i(\mathcal{M} \Lotimes_{\mathcal{A}} \mathcal{A}^0)$
vanishes for $i > 0$ (for $i < 0$, respectively);
here 
$(? \Lotimes_{\mathcal{A}}\mathcal{A}^0)$ is the left derived functor of
the extension of scalars functor 
$(? \otimes_{\mathcal{A}}\mathcal{A}^0)$.
Let $\dgMod(\mathcal{A})$ be the abelian category of
$\mathcal{A}$-modules
and $\dgFilMod(\mathcal{A})$ the full subcategory
consisting of objects that have an 
$\hat{L}_x$-flag, i.\,e.\ a
finite filtration with subquotients isomorphic to objects of $\{\hat{L}_x\}_{x \in W}$
(without shifts). 

\begin{theorem}[{cf.\ Theorem \ref{t:t-cat-prae} and Propositions
    \ref{p:dgfilmod-abelian-sub}, \ref{p:dgfilmod-equi}}]
  \label{t:t-cat-per}
  \rule{0mm}{1mm}
  \begin{enumerate}[label={(\arabic*)}]
  \item 
    $(\dgPerDer^{\leq 0}, \dgPerDer^{\geq 0})$ defines a bounded
    (hence non-de\-ge\-ner\-ate) 
    t-structure on
    $\dgPerDer(\mathcal{A})$.
  \item 
  The heart $\heart$ of this t-structure is
  equivalent to $\dgFilMod(\mathcal{A})$. More precisely,
  $\dgFilMod(\mathcal{A})$ is a full abelian subcategory
  of $\dgMod(\mathcal{A})$ and the obvious functor
  $\dgMod(\mathcal{A}) \ra \dgDer(\mathcal{A})$ 
  induces an equivalence 
  $\dgFilMod(\mathcal{A}) \sira \heart$.
  \item 
    Any object in $\heart$ has finite length, and  
    the simple objects in $\heart$ are (up to isomorphism) the
    $\{\hat{L}_x\}_{x \in W}$.
  \end{enumerate}
\end{theorem}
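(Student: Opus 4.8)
The plan is to reduce all three parts to the combinatorics of the flags provided by Theorem~\ref{t:describe}, together with homotopical minimality (Proposition~\ref{p:dgper-tfae}). First I would record how the defining functor behaves on the building blocks. Since each $L_x$ is a direct summand of $A^0$ as a right $A^0$-module, $\hat L_x=L_x\otimes_{\mathcal{A}^0}\mathcal{A}$ is a direct summand of $\mathcal{A}$, hence homotopically projective, and $\hat L_x\Lotimes_{\mathcal{A}}\mathcal{A}^0=\hat L_x\otimes_{\mathcal{A}}\mathcal{A}^0=L_x$, concentrated in degree $0$. Now take $\mathcal{M}\in\dgPerDer(\mathcal{A})$; by Theorem~\ref{t:describe} I may replace it by a module $M\in\dgFiltDer(\mathcal{A})$ with flag $0=F_0\subset\dots\subset F_n=M$ and subquotients $\{l_i\}\hat L_{x_i}$, $l_1\geq\dots\geq l_n$. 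Since $M$ is homotopically minimal one has $d(M)\subseteq M\cdot A^{\geq 1}$, so the differential induced on $M\otimes_{\mathcal{A}}\mathcal{A}^0=M/MA^{\geq 1}$ vanishes; hence $H^\bullet(M\Lotimes_{\mathcal{A}}\mathcal{A}^0)=M\otimes_{\mathcal{A}}\mathcal{A}^0\cong\bigoplus_i\{l_i\}L_{x_i}$, with $\{l_i\}L_{x_i}$ sitting in cohomological degree $-l_i$ (using the convention that $\{l\}$ places a degree-zero module into degree $-l$). Thus membership of $M$ in $\dgPerDer^{\leq 0}$ (resp.\ $\dgPerDer^{\geq 0}$) is equivalent to $l_i\geq 0$ (resp.\ $l_i\leq 0$) for all $i$.

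With this dictionary, part (1) becomes bookkeeping. Closure under the relevant shifts is immediate from $H^i(\{1\}\mathcal{M}\Lotimes_{\mathcal{A}}\mathcal{A}^0)=H^{i+1}(\mathcal{M}\Lotimes_{\mathcal{A}}\mathcal{A}^0)$. For the truncation triangle I would split the flag at the sign change of the $l_i$: as $l_1\geq\dots\geq l_n$, let $m$ be the number of indices with $l_i\geq 0$; then $F_m$ lies in $\dgPerDer^{\leq 0}$ and $M/F_m$ in $\dgPerDer^{\geq 1}$ (both inherit flags, so the dictionary applies), and the short exact sequence of modules with outer terms $F_m$ and $M/F_m$ yields the required distinguished triangle. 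For orthogonality I would use the (derived) induction--restriction adjunction $\Hom_{\dgDer(\mathcal{A})}(\hat L_x,\mathcal{N})\cong\Hom_{\dgDer(\mathcal{A}^0)}(L_x,\mathcal{N}|_{\mathcal{A}^0})$. If $\mathcal{N}\in\dgPerDer^{\geq 1}$, its minimal flag has all $l_j\leq -1$, so as a graded module $N$ is concentrated in degrees $\geq 1$, and shifting by $\{-l\}$ with $l\geq 0$ preserves this. Since $A^0$ is semisimple the adjoint Hom equals $\Hom_{A^0}(L_x,H^0(N|_{\mathcal{A}^0}))$, which vanishes because $N^0=0$; dévissage along the flag of an $\mathcal{M}\in\dgPerDer^{\leq 0}$ then gives $\Hom(\mathcal{M},\mathcal{N})=0$. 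Boundedness is clear since each flag is finite.

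For part (2) the dictionary identifies the heart with the flagged modules having all $l_i=0$, that is, the $\mathcal{A}$-modules carrying an $\hat L_x$-flag without shifts, which are exactly the objects of $\dgFilMod(\mathcal{A})$; so the functor $\dgMod(\mathcal{A})\to\dgDer(\mathcal{A})$ carries $\dgFilMod(\mathcal{A})$ into $\heart$ and is essentially surjective onto it. For full faithfulness I would use that such $M$ is homotopically projective, whence $\Hom_{\dgDer}(M,N)=\Hom_{\dgMod}(M,N)/(\text{homotopy})$, and then show any null-homotopic chain map between $\dgFilMod$-objects is zero: a homotopy has degree $-1$ and both modules live in degrees $\geq 0$, so it vanishes on degree $0$; since $d(M^0)\subseteq M^0A^1$ and $\mathcal{A}$-linearity then forces the map to kill $M^0$, which generates $M$, the map vanishes. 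It remains to see that $\dgFilMod(\mathcal{A})$ is an abelian subcategory of $\dgMod(\mathcal{A})$, i.e.\ that kernels and cokernels formed in $\dgMod(\mathcal{A})$ stay flagged; I would prove this by descending to the semisimple degree-zero level through the correspondence $M\leftrightarrow M^0$ (each flagged module is $M^0\otimes_{\mathcal{A}^0}\mathcal{A}$ as a graded module, carrying a twisted differential), where everything splits. This closure statement is the step I expect to be the main obstacle, since one must control the twisted differentials to guarantee that kernels and cokernels remain flagged.

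Finally, part (3) follows from the exact faithful functor $H^0(-\Lotimes_{\mathcal{A}}\mathcal{A}^0)\colon\heart\to\rModover{A^0}$, which sends $\hat L_x$ to the simple module $L_x$ and is exact on the heart because the flanking terms $H^{\pm 1}$ in the long exact sequence vanish for heart objects. If $\hat L_x$ had a proper nonzero subobject $K$ in $\heart$, then $0\neq H^0(K\Lotimes_{\mathcal{A}}\mathcal{A}^0)\hookrightarrow L_x$ would force equality and hence the quotient to vanish under the faithful functor, so $\hat L_x$ is simple. Conversely, for any simple $S\in\heart$ the bottom step $\hat L_{x_1}$ of its $\hat L_x$-flag is a nonzero subobject, whence $S\cong\hat L_{x_1}$. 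Finite length is then immediate, since the $\hat L_x$-flag is a finite filtration with simple subquotients, i.e.\ a composition series.
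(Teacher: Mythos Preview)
Your treatment of Parts (1) and (3) is correct and, in places, a pleasant variant of the paper's arguments. For orthogonality in Part (1) you go through the induction--restriction adjunction $\Hom_{\dgDer(\mathcal{A})}(\hat L_x,-)\cong\Hom_{\dgDer(\mathcal{A}^0)}(L_x,(-)|_{\mathcal{A}^0})$; the paper simply observes that for $X\in\dgFilt^{\leq 0}$ and $Y\in\dgFilt^{\geq 1}$ every morphism of the underlying \emph{graded} $A$-modules already vanishes, since $X$ is generated in degrees $\leq 0$ while $Y$ lives in degrees $\geq 1$. Both work; the paper's is a one-liner. Your truncation triangle and boundedness are exactly the paper's. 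For Part (3) you use the exact, object-faithful functor $H^0(-\Lotimes_{\mathcal{A}}\mathcal{A}^0)\colon\heart\to\Mod(A^0)$ to detect simplicity; the paper instead argues inside $\dgFilMod$ via Lemma~\ref{l:zero-or-iso}. Your route is a clean alternative. (A small remark: you do not need Proposition~\ref{p:dgper-tfae} for the dictionary---the flag itself already gives $d(M)\subseteq MA^+$, as in Remarks~\ref{rem:objects-dgfilt} and~\ref{rem:tstruktur-dgfilt}; this avoids a forward reference.)

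The genuine gap is exactly where you flag it, in Part (2). Descending to degree zero only tells you that the kernel $K$ of a map in $\dgFilMod$ has underlying graded module $K^0\otimes_{A^0}A$ with $K^0$ finite semisimple and $d(K)\subseteq KA^+$. These conditions do \emph{not} force an $\hat L$-flag. For example, take $A=k\langle x,y\rangle/(xy,yx)$ with $|x|=|y|=1$ and $d=0$, and set $K=A\oplus A$ with differential determined by $d(e_1)=e_2x$, $d(e_2)=e_1y$: then $K^0=k^2$ and $d(K)\subseteq KA^+$, yet $Z^0(K)=0$, so no copy of $\hat L$ sits at the bottom of any flag and $K\notin\dgFilMod$ (indeed $K$ is acyclic but not contractible, hence not even in $\dgPer$). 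What your sketch is missing is the paper's key move: use the \emph{induced} filtration from the ambient object. If $\mathcal{U}\subset\mathcal{M}$ is a dg submodule with $\mathcal{M}\in\dgFilMod$ and the graded module $U$ lies in the essential image $\mathcal{B}$ of $\pro_{A^0}^A$, intersect an $\hat L$-flag of $\mathcal{M}$ with $\mathcal{U}$; each associated graded piece is then a subobject of some $\hat L_{x_i}$ inside $\mathcal{B}$, hence $0$ or all of $\hat L_{x_i}$ by simplicity there, producing an $\hat L$-flag on $\mathcal{U}$ (and dually on $\mathcal{M}/\mathcal{U}$). Applying this to $\mathcal{U}=\ker f$ and to $\mathcal{U}=\bild f\subset\mathcal{N}$ gives the closure statement. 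So it is the ambient flag, not the intrinsic degree-zero data, that controls the twisted differential.
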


The truncation functors of this t-structure 
have a very simple description on $\dgFiltDer(\mathcal{A})$.

\subsection*{Koszul case}
\label{sec:koszul-case}

Assume now that the differential of $\mathcal{A}$ vanishes and that
the underlying graded ring $A$ is a Koszul ring (cf.\ \cite{BGS}). 
Let
$E(A):=\Ext^\bullet_A(A^0,A^0)$ be the graded ring
of self-extensions of the right
$A$-module $A^0=A/A^+$ (in the category of (non-graded) $A$-modules). 

\begin{theorem}[{cf.\ Theorem \ref{t:endo-koszul}}]
  Assume that $A$ is a Koszul ring with a Koszul resolution of finite
  length with finitely generated components. 
  Let $\mathcal{A}=(A,d=0)$.
  Then the heart $\heart$
  of $\dgPerDer(\mathcal{A})$ is
  equivalent to the opposite category of the category of finitely
  generated left $E(A)$-modules.
\end{theorem}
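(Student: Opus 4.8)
The plan is to combine the equivalence $\heart \simeq \dgFilMod(\mathcal{A})$ from Theorem~\ref{t:t-cat-per}(2) with classical Koszul duality, translating the statement into a claim about module categories that no longer mentions t-structures. First I would recall that by Theorem~\ref{t:t-cat-per}(2) it suffices to identify $\dgFilMod(\mathcal{A})$, the category of $A$-modules admitting an $\hat L_x$-flag (where now $d=0$, so $\hat L_x = L_x \otimes_{A^0} A$ is just the projective $A$-module $P_x$ generated by $L_x$ in degree zero). Since $A$ is Koszul with a Koszul resolution of finite length and finitely generated components, the $\hat L_x$ are precisely the indecomposable projective $A$-modules, and I expect $\dgFilMod(\mathcal{A})$ to coincide with the category $\gProjf(A)$ of finitely generated projective graded $A$-modules concentrated in a single interval of degrees, or more precisely the category of finitely generated $A$-modules filtered by these projectives.

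The core of the argument is the Koszul duality functor. Writing $E = E(A) = \Ext^\bullet_A(A^0,A^0)$, Koszul duality (in the finiteness situation of \cite{BGS}) furnishes a contravariant equivalence between suitable categories of graded $A$-modules and graded $E$-modules, sending each indecomposable projective $P_x = \hat L_x$ to the corresponding simple $E$-module, or dually sending the simple $A^0$-module summands to indecomposable projective $E$-modules. The second step is therefore to show that the functor $\Ext^\bullet_A(-, A^0)$ (the derived Hom into $A^0$, computed via the Koszul resolution) carries an object with an $\hat L_x$-flag to a finitely generated $E$-module, that this assignment is contravariant and fully faithful on $\dgFilMod(\mathcal{A})$, and that every finitely generated left $E$-module arises this way. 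The finite-length and finite-generation hypotheses on the Koszul resolution are exactly what guarantee that $E$ is a reasonable (e.g.\ finite-dimensional in each degree, and generated in degrees $0$ and $1$) ring and that the functor lands in and surjects onto $E\text{-}\Mof$.

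I would organize the proof as: (i) identify $\heart$ with $\dgFilMod(\mathcal{A})$ using the already-proved Theorem~\ref{t:t-cat-per}; (ii) under $d=0$, identify the $\hat L_x$ with the indecomposable projective graded $A$-modules and describe $\dgFilMod(\mathcal{A})$ as the full subcategory of $A$-modules admitting a finite filtration by these; (iii) apply the contravariant Koszul duality equivalence to send $\dgFilMod(\mathcal{A})$ to finitely generated left $E$-modules; (iv) check this functor is exact in the appropriate sense so that it matches the abelian structure on $\heart$, yielding an equivalence $\heart \xrightarrow{\sim} (E\text{-}\Mof)^{\opp}$. The opposite category appears because $\Hom$-duality is contravariant and because left $E$-modules are dual to right $A$-module data.

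The main obstacle, I expect, will be step (iii)--(iv): verifying that the Koszul duality functor is well-defined and fully faithful precisely on the subcategory $\dgFilMod(\mathcal{A})$ and that it is essentially surjective onto \emph{all} finitely generated left $E$-modules, not just those with some flag. Concretely, one must match the abelian structure of the heart $\heart$ (whose short exact sequences come from distinguished triangles in $\dgPerDer(\mathcal{A})$) with genuine short exact sequences of $E$-modules under the duality, and control the $\Ext$-computations using the finite-length finitely-generated Koszul resolution so that $\Ext^\bullet_A(\hat L_x, A^0)$ is exactly the indecomposable projective left $E$-module and higher terms behave as required. Establishing that the correspondence simples-to-projectives and projectives-to-simples is compatible with the filtrations on both sides, together with essential surjectivity, is where the real work lies; the rest is formal once Theorem~\ref{t:t-cat-per} is invoked.
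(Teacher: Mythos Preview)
Your plan contains a fundamental misidentification in step~(ii). Even though the differential on $\mathcal{A}=(A,d=0)$ vanishes, the differential on a dg $\mathcal{A}$-module need not: for $(M,d_M)\in\dgMod(\mathcal{A})$ the map $d_M$ is an $A$-linear degree-$1$ endomorphism with $d_M^2=0$, and this differential carries essential information. Objects of $\dgFilMod(\mathcal{A})$ are therefore \emph{not} merely graded $A$-modules with an $\hat L_x$-flag, and $\dgFilMod(\mathcal{A})$ is not a subcategory of $\gMod(A)$, let alone equal to $\gProjf(A)$ concentrated in degree zero. For instance, the object $\mathcal{K}(A)$ built from the Koszul resolution lies in $\dgFilMod(\mathcal{A})$ and has nonzero differential; that differential is precisely what encodes the $E(A)$-module structure you hope to extract. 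Your proposed functor $\Ext^\bullet_A(-,A^0)$ acts only on graded $A$-modules and ignores $d_M$, so step~(iii) as written cannot be carried out. (The remark after Proposition~\ref{p:dgfiltzero-heart} explicitly warns that objects of $\dgFilt^0$ are more general than linear complexes, so one cannot simply transplant the BGS equivalence.)

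The paper's argument is quite different and sidesteps this issue. It constructs a specific object $\mathcal{K}(A)\in\heart$ by totalizing the Koszul resolution, shows that $\mathcal{K}(A)$ is injective in $\heart$ (since $H^1(\mathcal{K}(A))=0$, via Proposition~\ref{p:injective-in-dgheart}) and contains $\mathcal{A}$ and hence every simple $\hat L_x$, and then applies the general representability principle of Proposition~\ref{p:heart-module-category}: for such an injective cogenerator $I$, the functor $\Hom_{\heart}(?,I)$ gives an equivalence $\heart\sira(\Mofover{\End_{\heart}(I)})^\opp$. The remaining work is a direct computation identifying $\End_{\heart}(\mathcal{K}(A))$ with $E(A)$, done by realizing both as the same subring of $\bigoplus_{i}\mathcal{R}_{-i}^i$, where $\mathcal{R}_n=\complexHom_{\gMod(A)}(P,\{n\}P)$.
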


\subsection*{Motivation}
\label{sec:motivation}

We became interested in perfect derived categories when we studied the
Borel-equivariant 
derived category of sheaves on the flag variety of a 
complex reductive group. 
We show in \cite{OSdiss} 
(or \cite{OSdiss-equi-mathz}) that this category
with its perverse t-structure is t-equivalent to the perfect derived category
$\dgPerDer(\mathcal{E})$ for some dg algebra $\mathcal{E}$
(that meets the conditions \ref{enum:pg}-\ref{enum:sdga}).
More precisely, $\mathcal{E}$ is the graded
algebra of self-extensions of the direct sum of the simple equivariant
perverse sheaves and has differential $d=0$. 
For the proof of this equivalence we need the t-structures 
$(\dgPerDer^{\leq 0}, \dgPerDer^{\geq 0})$ introduced above on several
categories of the form $\dgPerDer(\mathcal{A})$.

There are similar equivalences between equivariant derived categories
of sheaves and categories of the form
$\dgPraeDer(\mathcal{E})=\dgPerDer(\mathcal{E})$, see
\cite{BL, Luntstoric, guillermou}.
The strategy to obtain these equivalences
is quite general (see \cite[0.3]{Luntstoric}), the tricky point
however is to establish the
formality of some dg algebra $\mathcal{B}$ whose perfect derived
category is equivalent to the considered category of
sheaves. Formality means that $\mathcal{B}$ and
its cohomology $H(\mathcal{B})$ (a dg algebra with differential zero)
can be connected by a sequence of quasi-isomorphisms of dg algebras. 
Since each quasi-isomorphism of dg algebras induces an equivalence
between their (perfect) derived categories, formality enables us to
consider the more accessible dg algebra $H(\mathcal{B})$ instead of
$\mathcal{B}$.
Moreover, usually $H(\mathcal{B})$ identifies with the extension
algebra of some nice object from the geometric side.

This general strategy shows that categories of the form
$\dgPraeDer(\mathcal{A})=\dgPerDer(\mathcal{A})$ are natural candidates for
describing certain triangulated categories.

\subsection*{Overview}
\label{sec:overview-1}
This article is organized as follows.
In Chapters
\ref{sec:preliminaries} and \ref{sec:review-dg-modules}
we introduce our notation, prove some
basic results on graded modules and recall some results on dg modules. 
We show the equivalence $\dgFiltDer(\mathcal{A}) \subset
\dgPraeDer(\mathcal{A})$ in Chapter \ref{sec:filtered-dg-modules}.
In the following two chapters we define the t-structure on
$\dgPerDer(\mathcal{A})$, show that
$\dgPraeDer(\mathcal{A})=\dgPerDer(\mathcal{A})$ and describe the heart.
We give alternative characterizations of the objects of
$\dgFiltDer(\mathcal{A})$ in Chapter \ref{sec:homot-minim-dg}.
Chapter \ref{sec:indecomposables} contains some results on
indecomposables, a Fitting lemma for objects of
$\dgFiltDer(\mathcal{A})$ and the proof 
that 
$\dgPerDer(\mathcal{A})$ is a
Krull-Remak-Schmidt category. The last chapter (which is independent of
Chapter \ref{sec:indecomposables}) concerns the case that
$A$ is a Koszul ring.

\subsection*{Acknowledgments}
\label{sec:acknowledgements}

Most of the results of this article (apart from some improvements and Chapters
\ref{sec:homot-minim-dg} and \ref{sec:indecomposables}) can be
found in the algebraic part 
of my thesis \cite{OSdiss}. I am grateful to my advisor Wolfgang Soergel who
introduced me to the dg world. 
I would like to thank 
Peter Fiebig, 
Bernhard Keller, 
Henning Krau\-se,
Catharina Stroppel and
Geordie Williamson
for helpful discussions and interest.
I wish to thank the referee for carefully reading the manuscript and
for questions and comments.

\section{Preliminaries}
\label{sec:preliminaries}

We introduce our notation and prove some easy and probably well-known statements
that are crucial for the rest of this article.
We fix some commutative ring $k$.
All rings and algebras (= $k$-algebras) are assumed to be associative and unital.

If $A$ is a ring, we denote the category of
right $A$-modules by $\Mod(A)$ and the full subcategory of finitely
generated modules by $\Mof(A)$.
If $A$ is graded (= $\DZ$-graded) we write $\gMod(A)$ for the category
of graded modules and $\gMof(A)$ for the full subcategory of finitely
generated graded modules.

Let $A=\bigoplus_{i \in \DN} A^i$ be a
positively graded $k$-algebra.
Let $A^+= \bigoplus_{i > 0} A^i$.
The projection $A \ra A/A^+=A^0$ gives rise to the extension of scalars functor 
\begin{equation*}
\gMod(A) \ra\gMod(A^0), \quad M \mapsto \ol{M}:=M/MA^+=M\otimes_A A^0  .
\end{equation*}

\begin{lemma}
  \label{l:isocrit}
  Let $f:M \ra N$ be a morphism in $\gMof(A)$.
  If $\ol{f}:\ol M \ra \ol N$ is an isomorphism and $N$ is flat as an
  $A$-module, then $f$ is an isomorphism.
\end{lemma}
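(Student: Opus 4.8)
The plan is to reduce the statement to a graded Nakayama-type argument, treating surjectivity and injectivity of $f$ separately. The essential feature is the positive grading on $A$: every finitely generated graded module is bounded below in its grading, and $\ol{M} = M/MA^+ = M \otimes_A A^0$ behaves like reduction modulo a graded radical, so that vanishing of $\ol{M}$ detects vanishing of $M$ for bounded-below modules.

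First I would prove that $f$ is surjective. Set $C := \Cok(f)$, which is a finitely generated graded $A$-module since it is a quotient of $N$. Applying the right exact functor $? \otimes_A A^0$ to the exact sequence $M \xra{f} N \ra C \ra 0$ yields an exact sequence $\ol{M} \xra{\ol{f}} \ol{N} \ra \ol{C} \ra 0$. As $\ol{f}$ is an isomorphism, hence surjective, we obtain $\ol{C}=0$, i.e.\ $C = CA^+$. Because $C$ is a quotient of the finitely generated module $N$ it is bounded below, so if $C$ were nonzero I could pick its lowest nonzero degree $d$ and compute $(CA^+)^d = \sum_{j>0} C^{d-j}A^j = 0$, contradicting $C^d = (CA^+)^d \neq 0$. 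Thus $C=0$ and $f$ is surjective.

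For injectivity I would write $K := \Ker(f)$, so that $0 \ra K \ra M \xra{f} N \ra 0$ is exact by the previous step. Tensoring over $A$ with $A^0$ produces the long exact $\Tor$-sequence, whose relevant piece reads $\Tor_1^A(N, A^0) \ra \ol{K} \ra \ol{M} \xra{\ol{f}} \ol{N} \ra 0$. This is exactly where the flatness of $N$ enters: it forces $\Tor_1^A(N, A^0)=0$, leaving the short exact sequence $0 \ra \ol{K} \ra \ol{M} \xra{\ol{f}} \ol{N} \ra 0$. Since $\ol{f}$ is injective, $\ol{K}=0$, i.e.\ $K = KA^+$.

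The step I expect to require the most care is concluding $K=0$: without a Noetherian hypothesis the submodule $K \subseteq M$ need not be finitely generated, so the finitely generated form of Nakayama's lemma does not apply directly. The remedy is to observe that the lowest-degree argument only needs $K$ to be bounded below, and this holds because $K$ is a submodule of $M$, which is bounded below as it is finitely generated over the positively graded ring $A$. The same lowest-degree computation as for $C$ then forces $K=0$, so $f$ is injective; combined with surjectivity, $f$ is an isomorphism.
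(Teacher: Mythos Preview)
Your proof is correct and follows essentially the same approach as the paper: both arguments separate into surjectivity (via right exactness of $?\otimes_A A^0$) and injectivity (via flatness of $N$), and both reduce to the graded Nakayama-type observation that a bounded-below graded module $X$ with $\ol X=0$ must vanish. Your version is slightly more explicit about why bounded-belowness of $K$ suffices in place of finite generation, but the structure is identical.
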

\begin{proof}
  We use the following trivial observation:
  If a graded $A$-module $X$ with $X^i=0$ for $i \ll 0$ satisfies
  $\ol X=X/XA^+=0$, then $X=0$.

  Since $(? \otimes_A A^0)$ is right exact we obtain
  $\ol{\Kokern f}=\Kokern \ol f=0$, and our observation implies
  $\Kokern f=0$. 
  Applying $(? \otimes_A A^0)$ to 
  the short exact sequence $(\Kern f, M,
  N)$ and using the
  flatness of $N$, we see that $0=\ol{\Kern f}$. Our observation shows
  that $\Kern f =0$. So $f$ is an isomorphism.
\end{proof}

We also have the extension of scalars functor coming from the
inclusion $A^0 \subset A$,
\begin{equation}
  \label{eq:prodhat}
  \pro_{A^0}^A:  
  \gMod(A^0) \ra \gMod(A), \quad M \mapsto \hat M := M \otimes_{A^0} A.
\end{equation}
We often view $A^0$-modules as graded $A^0$-modules concentrated in
degree zero.

Assume now that $A^0$ is a semisimple ring.
Then $A^0$ has
only a finite number
of non-isomorphic simple (right) modules $(L_x)_{x \in W}$.
In particular we obtain projective graded $A$-modules $\hat L_x= L_x
\otimes_{A^0} A$. 

\begin{lemma}
  \label{l:zero-or-iso}
    Let $x$, $y \in W$.
    If $f: \hat L_x \ra \hat L_y$ is a non-zero morphism in
    $\gMod(A)$, it is an isomorphism and $x=y$.
\end{lemma}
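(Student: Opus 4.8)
The plan is to reduce everything to the degree-zero parts, exploiting positivity to control morphisms and semisimplicity to apply a Schur-type argument, and then to upgrade the resulting degree-zero isomorphism to a genuine isomorphism by means of the criterion in Lemma~\ref{l:isocrit}.

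First I would record the relevant structure of the modules involved. Since $A$ is positively graded and $L_x$ sits in degree zero, $\hat L_x = L_x \otimes_{A^0} A$ is concentrated in non-negative degrees, with $(\hat L_x)^0 = L_x \otimes_{A^0} A^0 = L_x$, and it is generated as an $A$-module by this degree-zero part. Hence any morphism $f \colon \hat L_x \ra \hat L_y$ in $\gMod(A)$ is completely determined by its restriction $f^0 \colon L_x \ra L_y$ to degree zero; in particular $f \neq 0$ forces $f^0 \neq 0$. The map $f^0$ is a morphism of $A^0$-modules between the two simple modules $L_x$ and $L_y$, so if it is non-zero then $\Kern f^0$ is a proper submodule of the simple module $L_x$ and $\bild f^0$ a non-zero submodule of the simple module $L_y$; both must be trivial, so $f^0$ is an isomorphism. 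This already gives $L_x \cong L_y$, and since the $L_x$ are pairwise non-isomorphic, $x = y$.

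It then remains to pass from the isomorphism $f^0$ back to $f$. Applying $(?\otimes_A A^0)$ yields $\ol{\hat L_x} = L_x \otimes_{A^0} A \otimes_A A^0 = L_x$ and likewise $\ol{\hat L_y} = L_y$, both concentrated in degree zero, and under these identifications $\ol f$ is precisely $f^0$, hence an isomorphism. Because $A^0$ is semisimple, the simple module $L_y$ is projective over $A^0$, so $\hat L_y = L_y \otimes_{A^0} A$ is a finitely generated projective, and in particular flat, $A$-module. Lemma~\ref{l:isocrit}, applied to $f \colon \hat L_x \ra \hat L_y$ with $\ol f$ an isomorphism and $\hat L_y$ flat, then shows that $f$ itself is an isomorphism.

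The genuinely mathematical content—reduction to degree zero via positivity together with the simplicity argument—is routine; I expect the main point requiring care to be the bookkeeping that identifies $\ol f$ with the degree-zero component $f^0$ and the verification that the flatness and finite-generation hypotheses of Lemma~\ref{l:isocrit} are met, so that the degree-zero isomorphism can legitimately be promoted to an isomorphism of $A$-modules.
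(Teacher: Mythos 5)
Your proof is correct and takes essentially the same route as the paper's: reduce to the degree-zero part (which is exactly $\ol f$ under the identification $\ol{\hat L_x}=L_x$), apply the Schur-type argument for the simple $A^0$-modules to get that $\ol f$ is an isomorphism and $x=y$, and then promote this to an isomorphism of graded $A$-modules via Lemma~\ref{l:isocrit}, using that $\hat L_y$ is projective, hence flat, over $A$. The paper's proof is just a terser version of the same argument, leaving implicit the bookkeeping you spell out.
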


\begin{proof}
  Since $\hat{L}_x$ and $\hat{L}_y$ are generated in degree zero (=
  generated as an $A$-module by their degree zero components),
  $\ol{f}: L_x \ra L_y$ is non-zero, hence an isomorphism and $x=y$.
  Now use Lemma \ref{l:isocrit}.
\end{proof}

Let $\gProjf(A)$ be the full subcategory of projective objects in
$\gMof(A)$.
It is clear that any finite direct sum of shifted objects $\hat L_x$ is in
$\gProjf(A)$. 
The converse is also true. We include the proof for completeness.

\begin{lemma}
  \label{l:eegproj}
  Each object of
  $\gProjf(A)$ is isomorphic to a finite direct sum of shifted $\hat
  L_x$, for $x \in W$. 
\end{lemma}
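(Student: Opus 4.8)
The plan is to reduce an arbitrary object $P$ of $\gProjf(A)$ to the desired shape by comparing it with its top $\ol P = P/PA^+ = P\otimes_A A^0$ and then invoking the isomorphism criterion of Lemma~\ref{l:isocrit}. First I would analyze $\ol P$. Since $P$ is finitely generated over the positively graded ring $A$, it is bounded below, and hence so is $\ol P$, which is moreover finitely generated over $A^0$. As $A^0$ is semisimple, every graded $A^0$-module is a direct sum of shifted simple modules $\{l\}L_x$; finite generation together with boundedness forces this decomposition to be finite, so $\ol P \cong \bigoplus_{j=1}^n \{l_j\}L_{x_j}$ for suitable $l_j \in \DZ$ and $x_j \in W$.

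Next I would set $Q := \bigoplus_{j=1}^n \{l_j\}\hat L_{x_j}$, which lies in $\gProjf(A)$ by the remark preceding the lemma. Because $\ol{\hat L_x} = \hat L_x \otimes_A A^0 = L_x \otimes_{A^0} A \otimes_A A^0 \cong L_x$, reduction yields a canonical isomorphism $\ol Q \cong \bigoplus_j \{l_j\}L_{x_j} \cong \ol P$. I would then lift this isomorphism: since $Q$ is projective, the composite $Q \sra \ol Q \sira \ol P$ lifts along the projection $P \sra \ol P$ to a morphism $f : Q \ra P$ whose reduction $\ol f : \ol Q \ra \ol P$ is the chosen isomorphism. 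Finally, as $P$ is projective it is flat, so Lemma~\ref{l:isocrit} applies to $f$: the reduction $\ol f$ is an isomorphism and the target $P$ is flat, whence $f$ itself is an isomorphism. This gives $P \cong Q = \bigoplus_j \{l_j\}\hat L_{x_j}$, as desired.

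The only delicate point, and thus the part deserving care rather than the bulk of the difficulty, is that the lifting step a priori controls only the reduced map $\ol f$; it is precisely Lemma~\ref{l:isocrit} that upgrades ``$\ol f$ is an isomorphism'' to ``$f$ is an isomorphism,'' using flatness of the projective module $P$. I would emphasize that positivity of the grading is essential here, since it is what makes $P$ (and $\ol P$) bounded below and thereby validates both the finiteness of the decomposition of $\ol P$ and the trivial observation underlying Lemma~\ref{l:isocrit}; without it the argument breaks down.
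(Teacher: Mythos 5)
Your proof is correct and takes essentially the same route as the paper: both compare $P$ with the induced module $\ol P \otimes_{A^0} A$ via a map whose reduction is an isomorphism, and both conclude with Lemma~\ref{l:isocrit} using the flatness of the projective module $P$. The only (cosmetic) difference is how the comparison map is built --- the paper splits $P \sra \ol P$ over the semisimple ring $A^0$ and applies the adjunction $(\pro_{A^0}^A, \res)$, whereas you first decompose $\ol P$ into shifted simples and then lift the resulting isomorphism along $P \sra \ol P$ using projectivity of $Q$ in $\gMod(A)$.
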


\begin{proof}
  Let $P$ be in $\gProjf(A)$. We view the canonical morphism $P \ra
  \ol{P}$ as a morphism of $A^0$-modules. Since $A^0$ is semisimple,
  there is a splitting $\sigma: \ol P \ra P$. Since $\pro_{A^0}^A$ is
  left adjoint to the restriction functor $\gMod(A) \ra \gMod(A^0)$
  coming from the inclusion $A^0 \subset A$, we get a morphism
  $\hat\sigma: \ol{P} \otimes_{A^0} A \ra P$ in $\gMof(A)$.
  Since $P$ is $A$-flat, we deduce from Lemma \ref{l:isocrit} that
  $\hat\sigma$ is an isomorphism.
  But $\ol P \otimes_{A^0} A$ has the required form.
\end{proof}

\section{Differential Graded Modules}
\label{sec:review-dg-modules}

We review the language of differential graded
(dg) modules over a dg algebra (see \cite{Keller-deriving-dg-cat,
  Keller-construction-of-triangle-equiv, BL}).

Let 
$\mathcal{A}=(A=\bigoplus_{i \in \DZ} A^i, d)$ 
be a differential graded $k$-algebra (= dg algebra). 
A dg (right) module over $\mathcal{A}$ will also be called an
$\mathcal{A}$-module or a dg module if there is no doubt about the dg algebra.  
We often write $M$ for a dg module $(M, d_M)$.
We consider the category $\dgMod(\mathcal{A})$ of dg modules, the
homotopy category $\dgHot(\mathcal{A})$ and the derived category
$\dgDer(\mathcal{A})$ of dg modules.  
(In
\cite{Keller-deriving-dg-cat}, these categories are denoted by
$\mathcal{C}(\mathcal{A})$, $\mathcal{H}(\mathcal{A})$ and
$\mathcal{D}(\mathcal{A})$ respectively.) We often omit $\mathcal{A}$
from the notation.
We denote the shift functor on all these categories (and on
$\gMod(A)$ and $\gMof(A)$) by $M \mapsto \{1\}M$, e.\,g.\
$(\{1\}M)^i=M^{i+1}$, $d_{\{1\}M}= -d_M$. 
We define $\{n\}=\{1\}^n$ for $n
\in \DZ$.

The homotopy category $\dgHot(\mathcal{A})$ with the shift
functor $\{1\}$ and the distinguished triangles isomorphic to standard
triangles (= mapping cones of morphisms) is a triangulated category.
Any short exact sequence $0 \ra K \ra M \ra
N \ra 0$ of $\mathcal{A}$-modules that is $A$-split (= it
splits in $\gMod(A)$) can be completed to a
distinguished triangle 
${K} \ra {M} \ra {N} \ra \{1\}{K}$ in
$\dgHot(\mathcal{A})$. 
The category $\dgDer(\mathcal{A})$ inherits the triangulation
from $\dgHot(\mathcal{A})$.
Since $\dgDer(\mathcal{A})$ has infinite direct sums, every idempotent
in $\dgDer(\mathcal{A})$ splits 
(see \cite[Prop.~3.2]{boekstedt-neeman-homotopy}).

A dg module ${P}$ is called \define{homotopically projective}
(\cite{Keller-construction-of-triangle-equiv}),
if it satisfies one of
the following equivalent conditions
(\cite[10.12.2.2]{BL}):
\begin{enumerate}
\item $\Hom_{\dgHot}({P}, ?) = \Hom_{\dgDer}({P},?)$, i.\,e.\ 
  for all dg modules $M$, the canonical map $\Hom_{\dgHot}({P}, M) \ra
  \Hom_{\dgDer}({P},M)$ is an isomorphism.
\item For each acyclic dg module ${M}$, we have $\Hom_{\dgHot}({P},
  {M})=0$.  
\end{enumerate}
In \cite[3.1]{Keller-deriving-dg-cat} such a module is said to have \stress{property (P)},
in \cite[10.12.2]{BL} the term \stress{$\mathcal{K}$-projective} is used.
Since evaluation at $1 \in A^0$ is an isomorphism
\begin{equation}
  \label{eq:Ahopro}
  \Hom_{\dgHot}(\mathcal{A}, {M}) \sira H^0(M),
\end{equation}
$\mathcal{A}$ and each direct summand
of $\mathcal{A}$ is homotopically projective.

Let $\dgHotproj(\mathcal{A})$ be the full subcategory of $\dgHot(\mathcal{A})$
consisting of homotopically projective dg modules (this is the category 
$\mathcal{H}_p$ in \cite[3.1]{Keller-deriving-dg-cat}).
It is a triangulated subcategory of $\dgHot(\mathcal{A})$
and closed under taking direct summands.
The quotient functor
$\dgHot(\mathcal{A}) \ra \dgDer(\mathcal{A})$
induces a triangulated
equivalence (\cite[3.1, 4.1]{Keller-deriving-dg-cat}) 
\begin{equation}
  \label{eq:dgHotp-equiv-dgDer}
  \dgHotproj(\mathcal{A}) \sira \dgDer(\mathcal{A}).
\end{equation}
The category $\dgPer(\mathcal{A})$ ($\dgPerDer(\mathcal{A})$ respectively) 
is defined to
be the smallest full triangulated subcategory of $\dgHot(\mathcal{A})$
(of $\dgDer(\mathcal{A})$ respectively)
that contains $\mathcal{A}$ and is closed under taking direct summands. 
The category $\dgPer(\mathcal{A})$ of perfect dg modules is a subcategory of $\dgHotproj(\mathcal{A})$.
The category $\dgPerDer(\mathcal{A})$ is called the perfect derived
category of $\mathcal{A}$.
Equivalence \eqref{eq:dgHotp-equiv-dgDer} restricts to a triangulated equivalence
\begin{equation*}
  \dgPer(\mathcal{A}) \sira \dgPerDer(\mathcal{A}).
\end{equation*}
In the following we prefer to work in the homotopy category
$\dgHot(\mathcal{A})$ and leave it to the reader to transfer our
results from $\dgPer(\mathcal{A})$ to 
$\dgPerDer(\mathcal{A})$.
Note that the objects of $\dgPer(\mathcal{A})$ are precisely the
homotopically projective objects of $\dgPerDer(\mathcal{A})$.

\begin{remark}
  \label{rem:conc-degree-zero}
  Let $\mathcal{B}$ be a dg algebra concentrated in degree zero,
  i.\,e.\ a dg algebra whose underlying graded algebra is concentrated
  in degree zero. 
  Then $\mathcal{B}$ is necessarily of the form
  $\mathcal{B}=(B=B^0,d=0)$ for some algebra $B$.

  In this case $\dgMod(\mathcal{B})$ is the category of complexes in
  $\Mod(B)$, $\dgHot(\mathcal{B})$ is the corresponding homotopy
  category and $\dgDer(\mathcal{B})$ is the derived category of the
  abelian category $\Mod(B)$. 
  The objects of the category $\dgPer(\mathcal{B})$ are the objects of
  $\dgHotproj(\mathcal{B})$ that are isomorphic to bounded complexes of
  finitely generated projective $B$-modules (see 
  for example \cite[Prop.~3.4]{boekstedt-neeman-homotopy}).
  \remarkend
\end{remark}

\section{Filtered DG Modules}
\label{sec:filtered-dg-modules}

In this chapter we introduce a certain category $\dgFilt$ of filtered
dg modules (for a suitable dg algebra). Later on we will see that this category is equivalent to
$\dgPer$.

Let $\mathcal{A}=(A=\bigoplus_{i \in \DZ} A^i, d)$ be a dg algebra. 
In the rest of this article we always assume that $\mathcal{A}$ satisfies the
following conditions:
\begin{enumerate}[label={(P\arabic*)}]
\item 
\label{enum:pg}
$A$ is positively graded, i.\,e.\ $A^i=0$ for $i < 0$;
\item 
\label{enum:ss}
$A^0$ is a semisimple ring;
\item 
\label{enum:sdga}
the differential of $\mathcal{A}$ vanishes on $A^0$, i.\,e.\ $d(A^0)=0$. 
\end{enumerate}

Then $A^0$ has only a finite number
of non-isomorphic simple (right) modules $(L_x)_{x \in W}$,
and $A^0$ is a dg subalgebra $\mathcal{A}^0$ of $\mathcal{A}$.

As in the graded setting,
the inclusion $\mathcal{A}^0 \hra \mathcal{A}$ and the projection
$\mathcal{A} \ra \mathcal{A}/\mathcal{A}^+=\mathcal{A}^0$ give rise to
extension of scalars functors
\begin{align*}
\dgMod(\mathcal{A}) \ra\dgMod(\mathcal{A}^0), \quad &M \mapsto
\ol{M}:=M/M\mathcal{A}^+=M\otimes_{\mathcal{A}} \mathcal{A}^0,\\
\dgMod(\mathcal{A}^0) \ra \dgMod(\mathcal{A}), \quad &M \mapsto \hat M := M \otimes_{\mathcal{A}^0} \mathcal{A}.
\end{align*}

We often view $A^0$-modules as dg $\mathcal{A}^0$-modules concentrated
in degree zero. In this manner, we obtain $\mathcal{A}$-modules
$\hat L_x=L_x \otimes_{\mathcal{A}^0} \mathcal{A}$. Since each $L_x$
is a direct summand of the $A^0$-module $A^0$, each $\hat L_x$ is a
direct summand of the dg module $A^0 \otimes_{\mathcal{A}^0} \mathcal{A}=
\mathcal{A}$.

\begin{examples}
  \label{exs:main}
  Let $R$ be a semisimple ring (and a $k$-algebra). Then 
  the dg algebra $(R=R^0,d=0)$ satisfies the conditions
  \ref{enum:pg}-\ref{enum:sdga}. Any dg algebra that is concentrated
  in degree zero and satisfies condition \ref{enum:ss} is of this
  form. 

  Let $A=\bigoplus_{i \in \DN} A^0$ be a positively
  graded algebra with $A^0$ a semisimple ring. Then the dg algebra
  $(A, d=0)$ satisfies the conditions \ref{enum:pg}-\ref{enum:sdga}.
  Any dg algebra with vanishing differential that satisfies conditions
  \ref{enum:pg}-\ref{enum:ss} is of this form.
  For example $A$ could be a polynomial algebra 
  $k[X_1,\dots, X_n]$ over a field $k$ with homogeneous generators
  $X_i$ of strictly positive degrees. In this case there is only one
  simple $k$-module $L=k$, and $\hat{L}=k[X_1,\dots, X_n]$.
  A more general example for such an $A$ would be a quiver algebra
  (over a field $k$) of a quiver with finitely many vertices and
  arrows of strictly positive degrees, or a quotient of such an
  algebra by a homogeneous ideal (which is assumed to be zero in degree zero). 
  Each vertex $x$ of the quiver gives rise to an idempotent $e_x$ in
  $A$ and to a dg module $\hat L_x=e_xA$.
    
  Examples of dg algebras satisfying conditions
  \ref{enum:pg}-\ref{enum:sdga} with non-vanishing differential arise
  for example in rational homotopy theory (see
  \cite{DGMS-real-homotopy} or \cite[\S 19]{Bott-Tu-diff-forms}). 
  \examplesend
\end{examples}

We consider the following full subcategories of $\dgHot(\mathcal{A})$:
\begin{itemize}
\item $\dgFilt(\mathcal{A})$: Its objects are $\mathcal{A}$-modules
  $M$ admitting a finite filtration 
  $0 = F_0(M) \subset F_1(M) \subset \dots \subset
  F_n(M) =M$ by dg submodules with subquotients
  \begin{equation*}
    F_{i}(M)/F_{i-1}(M) \cong \{l_i\}\hat L_{x_i}
    \quad\text{in $\dgMod(\mathcal{A})$}
  \end{equation*}
  for suitable $l_1 \geq l_2 \geq \dots \geq l_n$ and $x_i \in W$.
  We call such a filtration an \define{$\hat L$-filtration}.
\item $\dgPrae$: This is the smallest strict (= closed under
  isomorphisms) full triangulated subcategory of $\dgHot(\mathcal{A})$
  that contains all objects $(\hat L_x)_{x \in W}$.
\end{itemize}
These two subcategories correspond under the equivalence
\eqref{eq:dgHotp-equiv-dgDer} to the categories
$\dgFiltDer(\mathcal{A})$ and $\dgPraeDer(\mathcal{A})$ of the introduction.

\begin{remark}
  \label{rem:objects-dgfilt}
  The condition $l_1 \geq l_2 \geq \dots \geq l_n$ is essential for
  the definition of objects of $\dgFilt(\mathcal{A})$. 
  It means that $F_1(M) \cong \{l_1\}\hat L_{x_1}$ is generated in degree $-l_1$ (as a
  graded $A$-module), then $F_2(M)$ is an extension of $\{l_2\}\hat L_{x_2}$ (which is generated in
  degree $-l_2 \geq -l_1$) by $F_1(M)$ and so on.
  Since $\{l_2\}\hat L_{x_2}$ is projective as a graded $A$-module, we
  obtain $F_2(M)\cong \{l_1\}\hat L_{x_1} \oplus \{l_2\}\hat L_{x_2}$
  in $\gMod(A)$. The same reasoning yields by induction 
  \begin{equation}
    \label{eq:dgFilt-as-graded}
    F_i(M)\cong \{l_1\}\hat L_{x_1} \oplus \dots \oplus \{l_i\}\hat
    L_{x_i} \quad \text{in $\gMod(A)$};
  \end{equation}
  note that the differential of $F_i(M)$ does not necessarily respect
  such a direct sum decomposition.

  Let us draw some modules in $\dgFilt(\mathcal{A})$.
  We picture a module $\hat L_x\{l\}$ (with $x \in W$, $l \in \DZ$)
  as follows:
  \begin{equation}
    \label{eq:picture-hatL}
    \xymatrix@R=0pt@C=15pt{
      {\text{degree:}} & {-l-2} & {-l-1} & {-l} & {-l+1} & {-l+2} &
      {-l+3} & {\dots} \\ 
      {\hat L_{x}\{l\}:} & {} & {} & {\bullet} & {\circ} \ar[r] & 
      {\circ} \ar[r] & {\circ} \ar[r] & {\dots} 
      }
  \end{equation}
  A white ($\circ$) or black ($\bullet$) bead in a column labeled $i$
  represents $(L_x\{l\})^i$. If 
  there is no bead in column $i$, we have
  $(L_x\{l\})^i=0$. Since $L_x\{l\}$ is generated by its component in degree $-l$ (the
  black bead),
  there are only beads in degrees $\geq -l$.
  (The action of $A$ is not explicitly drawn in this picture: Elements
  of $A^j$ map elements of a bead to the bead that is $j$ steps to the right.)
  There is an arrow between two beads if the differential
  between the corresponding components is possibly $\not= 0$. Since
  the differential has degree $1$, all arrows go from a column to its
  right neighbour.
  Note that there is no arrow starting at the black bead since
  $\hat L_x$ is a direct summand of $\mathcal{A}$ and $d(A^0)=0$.
  
  Now we can draw pictures of more general objects. For example, let
  $M$ be an object of
  $\dgFilt(\mathcal{A})$ that admits an $\hat
  L$-filtration with four steps $0 = F_0 \subset F_1 \subset F_2 
  \subset F_3 \subset F_4=M$ and subquotients 
   \begin{equation*}
     F_1 \cong \{2\}\hat L_{x_1}, \qquad F_2/F_1 \cong \{1\}\hat L_{x_2}, \quad
     F_3/F_2 \cong \{1\}\hat L_{x_3}, \quad F_4/F_3 \cong \{-2\}\hat L_{x_4}
   \end{equation*}
  for some $x_i \in W$.
  As in \eqref{eq:dgFilt-as-graded} we 
  identify $M=F_4$ as a graded $A$-module with 
  $\{2\}\hat L_{x_1} \oplus \{1\}\hat L_{x_2} \oplus \{1\}\hat L_{x_3}
  \oplus \{-2\}\hat L_{x_4}$
  such that $F_i$ gets identified with the first $i$ summands.
  Here is our picture of $M$:
  \begin{equation}
    \label{eq:picture-dgfilt}
    \xymatrix@R=6pt@C=15pt{
      {\text{degree:}} & {-3} & {-2} & {-1} & {0} & {1} & {2} &
        {3} & {4} & {5} & {\dots} \\
      {\{-2\}\hat L_{x_4}:} & {} & {} & {} & {} & {} & {\bullet} \ar[rd] \ar[rdd] \ar[rddd] & {\circ} \ar[r] \ar[rd] \ar[rdd] \ar[rddd] &
      {\circ} \ar[r] \ar[rd] \ar[rdd] \ar[rddd] & {\circ} \ar[r] \ar[rd] \ar[rdd] \ar[rddd] &
      {\dots} \\ 
      {\{1\}\hat L_{x_3}:} & {} & {} & {\bullet} \ar[rd] \ar[rdd] &
      {\circ} \ar[r] \ar[rd] \ar[rdd] & {\circ}
      \ar[r] \ar[rd] \ar[rdd] & {\circ} \ar[r] \ar[rd] \ar[rdd] & {\circ} \ar[r]
      \ar[rd] \ar[rdd] & {\circ} \ar[r] \ar[rd] \ar[rdd] & {\circ} \ar[r] \ar[rd] \ar[rdd] &
      {\dots} \\ 
      {\{1\}\hat L_{x_2}:} & {} & {} & {\bullet} \ar[rd] &
      {\circ} \ar[r] \ar[rd] & {\circ}
      \ar[r] \ar[rd] & {\circ} \ar[r] \ar[rd] & {\circ} \ar[r]
      \ar[rd] & {\circ} \ar[r] \ar[rd] & {\circ} \ar[r] \ar[rd] &
      {\dots} \\ 
      {\{2\}\hat L_{x_1}:} & {} & {\bullet} & {\circ} \ar[r] & 
      {\circ} \ar[r] & {\circ} \ar[r] &
      {\circ} \ar[r] & {\circ} \ar[r] & {\circ} \ar[r] &
      {\circ} \ar[r] & {\dots} 
      }
  \end{equation}
  The direct sum of the first $i$ rows from below (without the arrows)
  represents the module $F_i$ as a
  graded $A$-module.
  The arrows represent the differential of $M$: Elements of a bead $b$
  are mapped to the direct sum of those beads that are endpoints of
  arrows starting at $b$.

  Since $M$ is generated by the black beads as an $A$-module, the dg
  submodule $M\mathcal{A}^+$ is represented by the white beads, 
  and the quotient module $\ol{M}=M/M\mathcal{A}^+$ is represented by
  the black beads (without arrows): The differential on $\ol{M}$
  vanishes.
  \remarkend
\end{remark}

\begin{remark}
  \label{rem:semisimple}
  Let $\mathcal{R}=(R=R^0, d=0)$ be a dg algebra concentrated in
  degree zero.
  Recall from Remark \ref{rem:conc-degree-zero} that each
  object of $\dgPer(\mathcal{R})$ is isomorphic to a bounded complex of
  finitely generated projective $R$-modules. 
  Let $\mathcal{M}=(M=\bigoplus_{j \in \DZ} M^j, d)$ be such a complex. 
  Assume that $M^j=0$ for $|j|\geq N$. Define subcomplexes $G_i(M)$ of $M$
  by
  \begin{equation*}
    G_i(M)^j =
    \begin{cases}
      M^j & \text{if $j \geq N-i$,}\\
      0 & \text{otherwise.}
    \end{cases}
  \end{equation*}
  This defines a finite filtration $0=G_0(M) \subset G_1(M) \subset
  \dots \subset G_{2N}(M)=M$ of $M$ with subquotients
  $G_i(M)/G_{i-1}(M) \cong \{p_i\}M^{N-i}$ with $p_i=i-N$ (here we
  consider $M^{N-i}$ as an $R$-module sitting in degree zero). Note that
  $p_1 \leq p_2 \leq \dots \leq p_{2N}$.

  Now assume that $R$ is a semisimple ring with simple modules $(L_x)_{x \in W}$;
  then we obtain from the above filtration $(G_i(M))_i$ a finite increasing filtration
  $(G_i'(M))_{i=0}^n$ with subquotients 
  $G_i'(M)/G_{i-1}'(M) \cong \{q_i\}L_{x_i}$ with 
  $q_1 \leq q_2 \leq \dots \leq p_{n}$ and $x_i \in W$.
  
  So we have opposite inequalities compared to the definition of the
  category $\dgFilt(\mathcal{R})$. The objects of
  $\dgFilt(\mathcal{R})$ are precisely the bounded complexes of finitely
  generated (projective) $R$-modules with differential zero (cf.\
  Picture \eqref{eq:picture-dgfilt})
  
  Nevertheless the inclusion $\dgFilt(\mathcal{R}) \subset
  \dgPer(\mathcal{R})$ is an equivalence of categories: This is well
  known since $R$ is a semisimple ring by assumption. We will prove this
  for arbitrary $\mathcal{A}$ satisfying \ref{enum:pg}-\ref{enum:sdga}
  in Theorems  \ref{t:filt-iso-prae} and \ref{t:t-cat-prae}.
  \remarkend
\end{remark}

\begin{lemma}
  \label{l:inclusions}
  We have inclusions
  \begin{equation*}
    \satzdgFilt \subset \satzdgPrae \subset \satzdgPer \subset \dgHotproj \subset
    \dgHot.
  \end{equation*}
  If $(M,d)$ is in $\satzdgFilt$, the underlying graded module $M$ is in $\gProjf(A)$.
\end{lemma}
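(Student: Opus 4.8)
The plan is to establish the two assertions of Lemma~\ref{l:inclusions} essentially independently. The chain of inclusions $\satzdgPer \subset \dgHotproj \subset \dgHot$ is already settled in the preceding section: $\dgHotproj$ is a full triangulated subcategory of $\dgHot$, and $\dgPer(\mathcal{A})$ was defined as the smallest full triangulated subcategory closed under direct summands containing $\mathcal{A}$, while it was already noted that $\dgPer(\mathcal{A}) \subset \dgHotproj(\mathcal{A})$ because $\mathcal{A}$ and its direct summands are homotopically projective (via the isomorphism \eqref{eq:Ahopro}) and $\dgHotproj$ is triangulated and summand-closed. So for that tail of the chain I would simply cite the relevant facts from Section~\ref{sec:review-dg-modules}. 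The interesting inclusions to verify are $\satzdgFilt \subset \satzdgPrae$ and $\satzdgPrae \subset \satzdgPer$, together with the final claim about the underlying graded module.

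For $\satzdgPrae \subset \satzdgPer$ I would argue that $\dgPrae(\mathcal{A})$, being the smallest strict full triangulated subcategory of $\dgHot$ containing the objects $\hat L_x$, is contained in any strict full triangulated subcategory closed under summands that contains those objects. The point is that each $\hat L_x$ lies in $\dgPer(\mathcal{A})$: it was observed after the definition of the $\hat L_x$ that each $\hat L_x$ is a direct summand of $\mathcal{A}$, and $\dgPer(\mathcal{A})$ contains $\mathcal{A}$ and is summand-closed. Hence $\dgPer(\mathcal{A})$ is a strict full triangulated subcategory containing all $\hat L_x$, so by minimality $\dgPrae(\mathcal{A}) \subset \dgPer(\mathcal{A})$.

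For $\satzdgFilt \subset \satzdgPrae$ I would take an object $M$ of $\dgFilt(\mathcal{A})$ with an $\hat L$-filtration $0 = F_0 \subset F_1 \subset \dots \subset F_n = M$ and induct on the length $n$. Each short exact sequence $0 \ra F_{i-1} \ra F_i \ra \{l_i\}\hat L_{x_i} \ra 0$ of $\mathcal{A}$-modules is $A$-split, since by \eqref{eq:dgFilt-as-graded} the subquotient $\{l_i\}\hat L_{x_i}$ is projective as a graded $A$-module, so the sequence splits in $\gMod(A)$. As recalled in Section~\ref{sec:review-dg-modules}, an $A$-split short exact sequence of $\mathcal{A}$-modules extends to a distinguished triangle $F_{i-1} \ra F_i \ra \{l_i\}\hat L_{x_i} \ra \{1\}F_{i-1}$ in $\dgHot(\mathcal{A})$. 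By the induction hypothesis $F_{i-1}$ lies in $\dgPrae(\mathcal{A})$, and $\{l_i\}\hat L_{x_i}$ does too since $\dgPrae$ is triangulated (hence shift-stable) and contains $\hat L_{x_i}$; as $\dgPrae$ is triangulated it is closed under extensions, so $F_i \in \dgPrae(\mathcal{A})$. At $i=n$ this gives $M \in \dgPrae(\mathcal{A})$.

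The final claim is immediate from Remark~\ref{rem:objects-dgfilt}: for $(M,d)$ in $\dgFilt(\mathcal{A})$, the identity \eqref{eq:dgFilt-as-graded} exhibits $M$, as a graded $A$-module, as a finite direct sum of shifted copies of the $\hat L_{x_i}$, and each shifted $\hat L_{x_i}$ is projective and finitely generated over $A$; hence $M \in \gProjf(A)$. I do not anticipate a genuine obstacle here --- the only point requiring a little care is the $A$-splitting that justifies passing from the filtration steps to distinguished triangles, and this is exactly what the graded-projectivity in \eqref{eq:dgFilt-as-graded} provides.
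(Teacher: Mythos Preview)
Your proposal is correct and follows essentially the same route as the paper: you use that each $\hat L_x$ is a direct summand of $\mathcal{A}$ to get $\dgPrae \subset \dgPer$, and you use the $A$-splitting of the successive short exact sequences in an $\hat L$-filtration (via graded-projectivity of the subquotients) to obtain distinguished triangles and hence $\dgFilt \subset \dgPrae$, with the final claim read off from \eqref{eq:dgFilt-as-graded}. The only cosmetic difference is that you spell out the induction on the filtration length explicitly, whereas the paper phrases the same argument more tersely.
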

\begin{remark}
  We prove later on that the inclusion $\dgFilt \subset \dgPrae$ is an
  equivalence (cf.\ Theorem \ref{t:filt-iso-prae}).
  Moreover, $\dgPrae$ is in fact closed under forming direct summands, which 
  implies $\dgPrae = \dgPer$ (cf.\ Theorem \ref{t:t-cat-prae}).
\remarkend
\end{remark}
\begin{proof}
As a right module over itself, $A^0$ is isomorphic to a finite direct
sum of simple modules $L_x$. Hence $\mathcal{A}$ is isomorphic to a finite direct
sum of modules $\hat L_x$. In particular, each $\hat L_x$ is in
$\dgPer$ and homotopically
projective (see \eqref{eq:Ahopro}).
This shows $\dgPrae \subset \dgPer$.

As a graded $A$-module, each $\hat L_x$ is projective. So any $\hat
L$-filtration of a dg module $(M,d)$ yields several $A$-split short 
exact sequences in $\dgMod$; in particular the graded module $M$ is
in $\gProjf(A)$ (as already seen in Remark \ref{rem:objects-dgfilt}).
Moreover these $A$-split exact sequences become distinguished
triangles in $\dgHot$ and show that every object of $\dgFilt$ lies in
$\dgPrae$. 

The remaining inclusions are obvious from the definitions.
\end{proof}

Let $N$ be a module over a ring. 
If $N$ has a composition series (of finite length) we denote its length by $\lambda(N)$;
otherwise we define $\lambda(N)=\infty$.
If $(M,d)$ is in $\dgFilt$, $\ol M$ is a finitely generated module
over the semisimple ring $A^0$, and its length $\lambda(\ol M)$ obviously
coincides with the length of any $\hat 
L$-filtration of $(M,d)$. 

If $M$ is a graded $A$-module, we 
define 
\begin{equation*}
  M_{\leq i} := \sum\nolimits_{j \leq i} M^j A
\end{equation*}
to
be the graded submodule of $M$ that is generated by the degree $\leq i$
parts. This defines an increasing filtration $\ldots \subset M_{\leq
  i}\subset M_{\leq i+1}\subset \dots$ of $M$.
If $M$ is the underlying graded module of a dg module in $\dgFilt$, 
the different entries in this filtration define a filtration that is coarser than any $\hat{L}$-filtration.
Define $M_{<i}:= M_{\leq i-1}$.

\begin{theorem}
  \label{t:filt-iso-prae}
  Assume that $\mathcal{A}$ is a dg algebra satisfying \ref{enum:pg}-\ref{enum:sdga}.
  Then every object of $\satzdgPrae$ is isomorphic to an object
  of $\satzdgFilt$. 
  In other words: The inclusion $\satzdgFilt \subset \satzdgPrae$ is an
  equivalence of categories.
\end{theorem}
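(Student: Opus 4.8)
The plan is to show that the closure $\overline{\dgFilt}$ of $\dgFilt$ under isomorphism in $\dgHot(\mathcal{A})$ is a full triangulated subcategory containing every $\hat{L}_x$. Since $\dgPrae$ is by definition the smallest such subcategory, this gives $\dgPrae\subseteq\overline{\dgFilt}$, and together with the inclusion $\dgFilt\subseteq\dgPrae$ of Lemma \ref{l:inclusions} it yields the asserted equivalence. Now $\overline{\dgFilt}$ visibly contains each $\hat{L}_x$ and is closed under the shift $\{1\}$, because shifting an $\hat{L}$-filtration replaces the exponents $l_1\geq\dots\geq l_n$ by $l_1+1\geq\dots\geq l_n+1$ and preserves the monotonicity. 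Everything therefore reduces to closure under mapping cones.

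So let $f\colon M\to N$ be a morphism in $\dgHot(\mathcal{A})$ with $M,N\in\dgFilt$; I want the mapping cone $C(f)$ to lie in $\overline{\dgFilt}$. Its underlying graded module is $N\oplus\{1\}M$, which by Lemma \ref{l:inclusions} and \eqref{eq:dgFilt-as-graded} lies in $\gProjf(A)$, so $C(f)$ already has the correct graded shape and the only issue is to produce a dg $\hat{L}$-filtration with \emph{decreasing} exponents. I would induct on $\lambda(\ol M)+\lambda(\ol N)$. If one of the two lengths vanishes the corresponding module is contractible and $C(f)$ is homotopy equivalent to the other, settling the base case. For the inductive step I distinguish whether the induced map $\ol f\colon\ol M\to\ol N$ is zero.

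If $\ol f\neq 0$, then since $\ol M,\ol N$ are semisimple and the $\hat{L}_x$ are generated in degree zero, choosing a dg representative of $f$ and decomposing via \eqref{eq:dgFilt-as-graded} exhibits a matrix entry $\{l\}\hat{L}_x\to\{l\}\hat{L}_x$ with nonzero reduction, which by Lemma \ref{l:zero-or-iso} is an isomorphism. The cone of an isomorphism is contractible, so I would cancel this contractible summand (Gaussian elimination) and replace $C(f)$, up to homotopy equivalence, by the cone of a morphism between the complementary summands; these again lie in $\dgFilt$ but have strictly smaller total length, and induction finishes this case. If $\ol f=0$, then all three entries $\ol{d_N},\ol f,\ol{d_M}$ of the cone differential reduce to zero, so the induced differential on $\ol{C(f)}$ vanishes and $C(f)$ is a module with graded module in $\gProjf(A)$ whose reduced differential is zero. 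For such a module I would use the canonical filtration $C(f)_{\leq i}$ introduced just before the theorem: vanishing of the reduced differential together with positivity \ref{enum:pg} forces each $C(f)_{\leq i}$ to be a dg submodule, and the successive subquotients are concentrated in a single generator degree. Peeling off, from the bottom degree upward, a simple $\hat{L}$-summand annihilated by the differential produces the largest exponent first, hence a filtration with decreasing exponents.

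The main obstacle is exactly this last peeling. Writing $W=C(f)^{-l_{\max}}$ for the bottom-degree part, vanishing of the reduced differential pins the bottom differential down to a map $\delta\colon W\to W\otimes_{A^0}A^1$, and I can split off a genuine summand $\{l_{\max}\}\hat{L}_x$ only when $\ker\delta\neq 0$. When $\delta$ is injective — the Koszul-type phenomenon in which a single $\hat{L}_x$ carries a differential making it acyclic, hence contractible — no honest $\hat{L}$-subquotient exists and one must instead cancel a contractible piece up to homotopy. The point I expect to be the heart of the argument is that $d^2=0$ together with positivity makes $\delta$ behave nilpotently, so that in every case one can either peel off a $d$-closed simple summand or split off a contractible $\hat{L}$; either operation strictly decreases $\lambda(\ol{C(f)})$, and finiteness of this length guarantees termination. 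Assembling the resulting subquotients in order of decreasing exponent then exhibits the reduced cone as an object of $\dgFilt$, closing the induction.
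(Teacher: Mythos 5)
Your global strategy (reduce everything to showing that the mapping cone of a dg morphism $f\colon M\ra N$ between objects of $\dgFilt$ is isomorphic in $\dgHot$ to an object of $\dgFilt$) is exactly the paper's, but both branches of your induction have genuine gaps. In the branch $\ol f\neq 0$, the Gaussian elimination step is not justified: the decompositions \eqref{eq:dgFilt-as-graded} of $M$ and $N$ are decompositions of \emph{graded} $A$-modules only, with respect to which $d_M$, $d_N$ and $f$ are merely triangular, not diagonal. If you cancel an invertible entry $\phi$ going from the copy of $\{l\}\hat L_x$ inside $M$ to the copy of $\{l\}\hat L_x$ inside $N$, the homotopy-equivalent object produced by Gaussian elimination is $N'\oplus\{1\}M'$ with the differential corrected by terms of the form $(\text{entry of }d)\comp\phi\inv\comp(\text{entry of }d)$; among these is the composite of the component of $d_N$ from $N'$ into the cancelled piece of $N$ with $\phi\inv$ and with the component of $d_M$ from the cancelled piece of $M$ into $M'$. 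This correction maps the $N'$-part into the $\{1\}M'$-part, and both factors are nonzero in general (the cancelled piece of $M$ need not be the first step of its filtration, nor the cancelled piece of $N$ the last step of its filtration). So the eliminated object is \emph{not} a cone of a morphism between the complementary summands, and $M'$, $N'$ (which are not dg submodules or quotients) need not lie in $\dgFilt$; your inductive hypothesis therefore does not apply. Deciding whether the eliminated object lies in $\dgFilt$ is essentially Proposition \ref{p:dgper-tfae}, which the paper can prove only \emph{after} the present theorem.

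In the branch $\ol f=0$ you correctly isolate the obstacle ($\delta$ injective on the bottom-degree part), but the principle you invoke to overcome it --- that $d^2=0$ together with positivity makes $\delta$ behave nilpotently, so that one can always peel off a $d$-closed simple summand or split off a contractible $\hat L$ --- is false. Take $A=k[x]/(x^2)$ with $x$ in degree $1$ and $d=0$ (this satisfies \ref{enum:pg}--\ref{enum:sdga}) and the dg module $X=(A,d_X=\ell_x)$, left multiplication by $x$ (legitimate by the observation in Example \ref{ex:A-truncated-poly}). Its underlying graded module is $\hat L$, its reduced differential vanishes, $\delta$ is injective, and $X$ is acyclic but not contractible; since $\lambda(\ol X)=1$ it contains no $d$-closed simple summand and no contractible $\hat L$-summand. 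Indeed $X$ is not homotopically projective, hence not in $\dgPer(\mathcal{A})$ at all (Lemma \ref{l:inclusions}). What rules this phenomenon out for your $C(f)$ is precisely the cone structure, which your argument never uses at this point: for instance, when the bottom-degree generators of $C(f)$ come from $\{1\}M$, one needs $\ol f=0$ and positivity to see that $f$ kills $F_1(M)$, so that $\{1\}F_1(M)$ is a dg submodule of $C(f)$ isomorphic to a shifted $\hat L_x$; nothing of this sort appears in your sketch. The paper's proof avoids both problems: it first reduces, via the octahedral axiom, to the case $\lambda(\ol X)=1$, and in that case, instead of cancelling a graded summand, it exhibits an acyclic dg \emph{submodule} $V\subset C(f)$, checks via \eqref{eq:filtschnitt} that $C(f)/V$ lies in $\dgFilt$, and concludes because $C(f)\ra C(f)/V$ is a quasi-isomorphism between homotopically projective modules, hence an isomorphism in $\dgHot$ --- this last step is exactly where homotopic projectivity, missing from your argument, is used.
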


\begin{proof}
  All generators $\hat{L}_x$ of $\dgPrae$ are in $\dgFilt$,
  and $\dgFilt$ is closed under the shift $\{1\}$. 
  So it is sufficient to prove:

  \textbf{Claim:} Let $f: X \ra Y$ be a morphism
  in $\dgMod$ of objects $X$, $Y$ of
  $\dgFilt$. Then the cone $C(f)$ is isomorphic in $\dgHot$ to an object of
  $\dgFilt$.

  We will prove this by induction on the length $\length(\ol{X})$ of
  $\ol X$. We may assume
  that $\length(\ol{X}) > 0$.

  Recall that the cone $C(f)$ is given by the graded (right)
  $A$-module $Y\oplus\{1\} X$ with differential
  $d_{C(f)}=\tzmat{d_Y}f0{-d_X}$. 

  \textbf{The case $\length(\ol{X}) = 1$:}
  Using the shift $\{1\}$ we may assume that $X=\hat{L}_x$ for some $x \in W$.
  Choose an $\hat L$-filtration $(F_i(Y))_{i=0}^n$ of $Y$ and abbreviate
  $F_i=F_i(Y)$. Assume that $F_{i}/F_{i-1} \cong \{l_i\}\hat{L}_{x_i}$,
  for $l_1 \geq l_2 \geq \dots \geq l_n$ and $x_i \in W$.

  If the image of $f$ is contained in $Y_{< 0}$, 
  let $s \in \{0, \dots, n\}$ 
  with $F_s= Y_{<0}$.
  Then 
  \begin{equation*}
    G_i = 
    \begin{cases}
      F_i\oplus 0 & \text{if $i \leq s$,}\\
      F_{i-1}\oplus \{1\}\hat{L}_x & \text{if $i>s$.}
    \end{cases}
  \end{equation*}
  defines an 
  $\hat L$-filtration $(G_i)_{i=0}^{n+1}$ 
  of the cone $C(f)=Y\oplus\{1\}\hat{L}_x$. Thus $C(f)$ is in $\dgFilt$.

  Now assume $\bild f \not \subset Y_{<0}$. 
  Let $t \in \{0, n\}$ be minimal with $\bild f \subset F_t$.
  Then $t \geq 1$ and $l_t= 0$, since $X$ is generated by its degree zero part.
  The composition
  \begin{equation*}
    X=\hat{L}_x \xra{f} F_t \sra F_t/F_{t-1} \cong \hat{L}_{x_t}
  \end{equation*}
  is non-zero and an isomorphism by Lemma \ref{l:zero-or-iso}.
  Hence $f$ induces an isomorphism $f:\hat{L}_x \sira \bild f$, and
  its mapping cone $V=(\bild f)\oplus\{1\}\hat{L}_x$ is an acyclic dg
  submodule of $C(f)$. 

  The inverse of the isomorphism $\bild f \sira F_t/F_{t-1}$ splits the short exact
  sequence $(F_{t-1}, F_{t}, F_t/F_{t-1})$.
  This implies that, for $1 \leq i \leq n$, 
  \begin{equation}
    \label{eq:filtschnitt}
    F_i \cap (F_{i-1}+\bild f) =
    \begin{cases}
      F_{i-1} & \text{if $i \not= t$,}\\
      F_i & \text{if $i=t$.}
    \end{cases}
  \end{equation}

  The filtration $(F_i+V)_i$ of $C(f)$ induces a filtration of 
  $C(f)/V$ with successive subquotients
  \begin{equation*}
    \frac{F_i+V}{F_{i-1}+V} \sila \frac{F_i}{F_i\cap (F_{i-1}+V)}
    \sila \frac{F_i}{F_i\cap (F_{i-1}+\bild f)}
  \end{equation*}
  Using \eqref{eq:filtschnitt}, we see that this filtration is an $\hat L$-filtration of
  $C(f)/V$, hence $C(f)/V$ is in $\dgFilt$.

  The short exact sequence $(V, C(f), C(f)/V)$ of
  $\mathcal{A}$-modules and the acyclicity of $V$ show that 
  $C(f) \ra C(f)/V$ is a quasi-isomorphism. 
  Since both $C(f)$ and $C(f)/V$ are homotopically projective, it is
  an isomorphism in $\dgHot$. Hence $C(f)$ is isomorphic to an object
  of $\dgFilt$.

  \textbf{Reduction to the case $\length(\ol{X}) = 1$:}
  This follows from \cite[1.3.10]{BBD} but let me include the proof
  for convenience.

  Let $U:=F_1(X)$ be the first step of an $\hat L$-filtration of $X$,
  $u: U \ra X$ the inclusion and $p:X\ra X/U$ the projection. 
  Then the short exact sequence $0\ra U \xra{u} X \xra{p} X/U \ra 0$ is
  $A$-split and defines a distinguished triangle $(U,X,X/U)$ in $\dgHot$.
  We apply the octahedral axiom to the maps $u$ and $f$ and get the
  dotted arrows in the following commutative diagram.
  \begin{equation*}
    \xymatrix@dr{
      & {\{1\}U} \ar[r]^{\{1\}u} 
      & {\{1\}X} \ar[r]^{\{1\}p} 
      & {\{1\}X/U} \\
      {X/U} \ar@(ur,ul)[ru] \ar@{..>}[r] 
      & {C(f\comp u)} \ar@{..>}[r] \ar[u] 
      & {C(f)} \ar@(dr,dl)[ru] \ar[u]\\
      X \ar[u]^p \ar[r]^f 
      & Y \ar@(dr,dl)[ru] \ar[u]\\
      U \ar[u]^u \ar@(dr,dl)[ru]^{f\comp u}
    }
  \end{equation*}
  The four paths with the bended arrows are distinguished
  triangles.

  By the length 1 case we may replace $C(f \comp u)$ by an isomorphic
  object $\tilde C(f \comp u)$ of $\dgFilt$. 
  So $C(f)$ is isomorphic to the cone of a morphism 
  $X/U \ra \tilde C(f \comp u)$ and hence, by induction, isomorphic to
  an object of $\dgFilt$.
\end{proof}

\section{t-Structure}
\label{sec:t-structure}

Using the equivalence $\dgFilt \subset \dgPrae$, we define a
bounded t-structure (see \cite{BBD}) on $\dgPrae$. As a corollary we obtain that
$\dgPrae$ coincides with $\dgPer$.   
Let $\mathcal{A}$ as before satisfy \ref{enum:pg}-\ref{enum:sdga}.

If $M$ is a graded $A^0$-module, we define its support $\supp M$ by
\begin{equation*}
  \supp M := \{i \in \DZ \mid M^i \not= 0\}
\end{equation*}
If $I \subset \DZ$ is a subset we define the full subcategory
\begin{equation}
  \label{eq:dgperI}
  \dgPer^{I} := \{{M} \in \dgPer \mid 
  \supp {H(\ol{M})} \subset I\}.
\end{equation}
By replacing $\dgPer$ by $\dgPrae$ or $\dgFilt$, we define $\dgPrae^I$
and $\dgFilt^I$.
We write $\dgPer^{\leq n}$, $\dgPer^{\geq n}$, $\dgPer^n$ instead of
$\dgPer^{(-\infty, n]}$, $\dgPer^{[n,\infty)}$, $\dgPer^{[n,n]}$ respectively, and
similarly for $\dgPrae$ and $\dgFilt$.

\begin{remark}
In \eqref{eq:dgperI} we could also write 
${M} \Lotimes_{\mathcal{A}} \mathcal{A}^0$
instead of 
$\ol M={M} \otimes_{\mathcal{A}} \mathcal{A}^0$, 
since each object of $\dgPer$ is homotopically projective, hence
homotopically flat (here $(? \Lotimes_{\mathcal{A}}
\mathcal{A}^0):\dgDer(\mathcal{A}) \ra \dgDer(\mathcal{A}^0)$ is the
derived functor of the extension of scalars functor).
In fact this is the correct definition if one works in $\dgDer$ instead
of $\dgHotproj$ (as we do in the introduction).
\remarkend
\end{remark}

\begin{remark}
  \label{rem:tstruktur-dgfilt}
  It is instructive to consider an object ${M}$ of $\dgFilt$ (and to
  have a picture in mind as picture \eqref{eq:picture-dgfilt} in Remark
  \ref{rem:objects-dgfilt}).
  Note that the differential of $\ol{M}$ vanishes: 
  This is explained at the end of Remark \ref{rem:objects-dgfilt}
  in an example, but the argument generalizes immediately to an
  arbitrary object of $\dgFilt$.
  This implies that $\ol{M}$ and $H(\ol{M})$ coincide and in
  particular have the same support.
  So ${M}$ lies in
  $\dgPrae^{\leq n}$ (in $\dgPrae^{\geq n}$) if and only if it is
  generated in degrees $\leq n$ (in degrees $\geq n$)
  as a graded $A$-module.
\remarkend
\end{remark}

\begin{theorem}[t-structure]
  \label{t:t-cat-prae}
  Let $\mathcal{A}$ be a dg algebra satisfying \ref{enum:pg}-\ref{enum:sdga}.
  Then $\satzdgPrae=\satzdgPer(\mathcal{A})$ and
  $(\satzdgPer^{\leq 0}, \satzdgPer^{\geq 0})$
  defines a bounded 
  (in particular non-degenerate) 
  t-structure on $\satzdgPer(\mathcal{A})$.
\end{theorem}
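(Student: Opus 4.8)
The plan is to reduce everything to the explicit category $\dgFilt$ via the equivalence $\dgFilt \subset \dgPrae$ of Theorem~\ref{t:filt-iso-prae}, to verify the t-structure axioms there, and only afterwards to deduce the identification $\dgPrae = \dgPer$. Recall from Remark~\ref{rem:tstruktur-dgfilt} that for an object $M$ of $\dgFilt$ the differential of $\ol M$ vanishes, so $H(\ol M) = \ol M$ and $M$ lies in $\dgPrae^{\leq 0}$ (resp.\ in $\dgPrae^{\geq 1}$) precisely when it is generated, as a graded $A$-module, in degrees $\leq 0$ (resp.\ in degrees $\geq 1$). Since the support condition defining the subcategories $\dgPrae^{I}$ is invariant under isomorphism in $\dgHot$ (the remark following \eqref{eq:dgperI}), Theorem~\ref{t:filt-iso-prae} lets me assume throughout that the objects in question already lie in $\dgFilt$.

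First I would check the two easy axioms. The inclusions $\dgPrae^{\leq 0} \subset \dgPrae^{\leq 1}$ and $\dgPrae^{\geq 1} \subset \dgPrae^{\geq 0}$ are immediate from $(-\infty,0] \subset (-\infty,1]$ and $[1,\infty) \subset [0,\infty)$. For the orthogonality $\Hom_{\dgHot}(X,Y)=0$ with $X \in \dgPrae^{\leq 0}$ and $Y \in \dgPrae^{\geq 1}$, I may take $X$, $Y$ in $\dgFilt$ with $X$ generated in degrees $\leq 0$ and $Y$ satisfying $Y^j = 0$ for $j \leq 0$ (being generated in degrees $\geq 1$). Any morphism of graded $A$-modules $X \to Y$ sends the generators of $X$, which sit in degrees $\leq 0$, into the vanishing components $Y^{\leq 0}$, and is therefore zero; a fortiori there are no nonzero chain maps and $\Hom_{\dgHot}(X,Y)=0$.

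For the truncation axiom I would use the submodule $M_{\leq 0} = \sum_{j \leq 0} M^j A$. The point is that for $M \in \dgFilt$ this graded submodule coincides with a step $F_s$ of any $\hat L$-filtration: because the shifts satisfy $l_1 \geq \dots \geq l_n$, the generators (the black beads of \eqref{eq:picture-hatL}) sit in increasing degrees $-l_1 \leq \dots \leq -l_n$, so the generators of degree $\leq 0$ form an initial segment. Hence $M_{\leq 0}$ is a dg submodule lying in $\dgFilt$ and generated in degrees $\leq 0$, while $M/M_{\leq 0}$ lies in $\dgFilt$ and is generated in degrees $\geq 1$. The short exact sequence $0 \to M_{\leq 0} \to M \to M/M_{\leq 0} \to 0$ is $A$-split, since $M/M_{\leq 0}$ is graded projective (Lemma~\ref{l:inclusions}), and so yields a distinguished triangle $M_{\leq 0} \to M \to M/M_{\leq 0} \to \{1\}M_{\leq 0}$ in $\dgHot$ with $M_{\leq 0} \in \dgPrae^{\leq 0}$ and $M/M_{\leq 0} \in \dgPrae^{\geq 1}$, which is the required truncation triangle. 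Boundedness is then clear: an object of $\dgFilt$ has only finitely many generators, so its generator degrees lie in some interval $[-N,N]$, placing it in $\dgPrae^{\geq -N} \cap \dgPrae^{\leq N}$; non-degeneracy follows from boundedness.

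It remains to upgrade the t-structure from $\dgPrae$ to $\dgPer$ by proving $\dgPrae = \dgPer$, and this is the step I expect to carry the real weight. We already have $\dgPrae \subset \dgPer$, so it suffices to show that $\dgPrae$ is closed under direct summands in $\dgHot$; since $\dgPrae$ is a strict full triangulated subcategory containing $\mathcal{A}$, the minimality of $\dgPer$ then forces $\dgPer \subset \dgPrae$. Closure under summands is exactly idempotent completeness of $\dgPrae$, and here I would invoke the general principle that a triangulated category admitting a bounded t-structure is idempotent complete (its heart is abelian, hence idempotent complete; see Le--Chen, or extend the t-structure to the idempotent completion in the manner of Balmer--Schlichting and split in the heart). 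Concretely, an idempotent on $M \in \dgPrae$ splits in the idempotent-complete category $\dgDer \cong \dgHotproj$; under the extended t-structure the cohomology objects of the summands are summands of those of $M$, so they live in the abelian heart $\heart \subset \dgPrae$, and a bounded Postnikov system rebuilds each summand from finitely many shifted heart objects inside the triangulated category $\dgPrae$. This passage, rather than the routine axiom checks above, is the main obstacle: everything else is forced by the rigid combinatorics of $\dgFilt$, whereas establishing idempotent completeness cleanly is where genuine input is needed.
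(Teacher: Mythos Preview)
Your proof is correct and follows essentially the same route as the paper: reduce to $\dgFilt$ via Theorem~\ref{t:filt-iso-prae}, verify the three t-structure axioms there (orthogonality by the vanishing of graded $A$-module maps, truncation via the dg submodule $M_{\leq 0}$ appearing in any $\hat L$-filtration, inclusions trivially), observe boundedness, and then conclude $\dgPrae=\dgPer$ by invoking the Le--Chen result that a triangulated category with a bounded t-structure is idempotent complete. Your additional sketch of how one might argue idempotent completeness via Postnikov towers in the idempotent completion is not needed once you cite Le--Chen, but it does no harm.
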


\begin{remark}
  In the special case that $\mathcal{A}$ is a polynomial ring
  $(\DR[X_1, \dots, X_n], d=0)$ with generators in strictly positive
  even degrees and differential zero, this t-structure
  coincides with the t-structure defined in \cite[11.4]{BL}.
\remarkend
\end{remark}

\begin{proof}
  We first prove that $(\dgPrae^{\leq 0}, \dgPrae^{\geq 0})$
  defines a bounded t-struc\-ture on $\dgPrae$ and have to check
  the three defining properties (\cite[1.3.1]{BBD}). 
\begin{enumerate}
\item $\Hom_{\dgHot}(X, Y)=0$ for $X \in \dgPrae^{\leq 0}$ and $Y \in
  \dgPrae^{\geq 1}$:
  By Theorem \ref{t:filt-iso-prae} we may assume that $X$, $Y$ are
  in $\dgFilt$. But then even any morphism of the underlying
  graded $A$-modules is zero.
\item $\dgPrae^{\leq 0} \subset
  \dgPrae^{\leq 1}$ and $\dgPrae^{\geq 1} \subset
  \dgPrae^{\geq 0}$: Obvious.
\item\label{en:triangle-t-prae} If $X$ is in $\dgPrae$ there is a
  distinguished triangle 
  $(M, X, N)$ with $M$ in
  $\dgPrae^{\leq 0}$ and $N$ in $\dgPrae^{\geq 1}$:
  We may assume that $X$ is in $\dgFilt$. The graded $A$-submodule
  $X_{\leq 0}$ of $X$ is an
  $\mathcal{A}$-submodule, since it appears in any
  $\hat L$-filtration of $X$. The short exact sequence 
  $X_{\leq 0} \ra X \ra X/X_{\leq 0}$ 
  is $A$-split and hence defines a distinguished triangle $(X_{\leq 0}, X,
  X/X_{\leq 0})$ in $\dgHot$. All terms of this triangle are in
  $\dgFilt$, $X_{\leq 0}$ is in $\dgPrae^{\leq 0}$ and 
  $X/X_{\leq 0}$ in $\dgPrae^{\geq 1}$.
\end{enumerate}

It is obvious that any object of $\dgFilt$ is contained in 
$\dgFilt^{[a,b]}$, for some integers $a \leq b$.
Hence our t-structure is bounded.

We claim that $\dgPrae \subset \dgHot(\mathcal{A})$
is closed under taking direct summands. This will imply that
$\dgPrae=\dgPer$. But our claim is an application of the main result
of  \cite{le-chen-karoubi-trcat-bdd-t-str}: A triangulated category
with a bounded t-structure is Karoubian (= idempotent-split = idempotent complete)
(cf. \cite{balmer-schlichting}).
\end{proof}

The t-structure $(\dgPer^{\leq 0}, \dgPer^{\geq 0})$ on $\dgPer$
yields truncation functors 
$\tau_{\leq n}$ and $\tau_{\geq n}$ (\cite[1.3.3]{BBD}).
For objects $M$ in the equivalent subcategory $\dgFilt$, we can assume
that these truncation functors are given by $M \mapsto M_{\leq n}$ 
and $M \mapsto M/M_{< n}$ (and similarly for morphisms). 
Note that these objects are again in $\dgFilt$. 
Hence the truncation functors are very explicit on $\dgFilt$.

\section{Heart}
\label{sec:heart}
We show that the heart of our t-structure on $\dgPer(\mathcal{A})$ is
naturally equivalent to a full abelian subcategory of
$\dgMod(\mathcal{A})$. 
We keep the assumption that $\mathcal{A}$ satisfies \ref{enum:pg}-\ref{enum:sdga}.

Let $\heart=\dgPer^{0}$ be the heart
of our t-structure 
on $\dgPer$.
Recall that $\dgFilt^{0}$ is the full subcategory of $\dgFilt$ consisting of
objects $M$ with $\supp \ol M \subset \{0\}$ 
(cf.\ Remark \ref{rem:tstruktur-dgfilt}). 
The underlying graded $A$-module of such an object is isomorphic to a finite
direct sum of some $\hat{L}_x$ (without shifts).

\begin{remark}
  The objects of $\dgFilt^0$ share some similarity with the so-called
  ``linear complexes'' (see e.\,g.\ \cite[Sect.~3]{MOS-quadratic} and
  references therein). Let me explain the relation informally: A
  linear complex is a certain complex of graded modules. If
  one takes some kind of total complex, one obtains an object that
  looks like an object of $\dgFilt^0$. The differential of a general object of
  $\dgFilt^0$ however can be more complicated.
  For example, the complex $P$ and the dg module $\mathcal{K}(A)$ in Chapter
  \ref{sec:koszul-duality} are examples of a linear complex and the
  associated object of $\dgFilt^0$.
  It might be interesting to consider common generalizations
  of both notions.
  \remarkend
\end{remark}

From the above discussion it is clear
that $\dgFilt^0$ is contained in $\heart$ and that each object of
$\heart$ is isomorphic to an object of $\dgFilt^0$. This shows

\begin{proposition}
  \label{p:dgfiltzero-heart}
  The inclusion $\satzdgFilt^0 \subset \heart$ is an equivalence of
  categories. In particular, $\satzdgFilt^0$ is an
  abelian category.
\end{proposition}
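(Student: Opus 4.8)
The plan is to deduce everything from the equivalence $\dgFilt \sira \dgPrae = \dgPer$ established in Theorems~\ref{t:filt-iso-prae} and~\ref{t:t-cat-prae}, together with the standard fact that the heart of a t-structure is an abelian category \cite[1.3.6]{BBD}. Since $\dgFilt^0$, $\dgFilt$, $\dgPer$ and $\heart$ are all full subcategories of $\dgHot$, the inclusion $\dgFilt^0 \subset \heart$ is automatically fully faithful once it is seen to be well defined. Thus I only have to verify two things: that every object of $\dgFilt^0$ really lies in $\heart$, and that the inclusion is essentially surjective.

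For the first point the crucial input is Remark~\ref{rem:tstruktur-dgfilt}: for an object $M$ of $\dgFilt$ the differential of $\ol M$ vanishes, so $\ol M$ and $H(\ol M)$ coincide and in particular have the same support. Hence, for $M$ in $\dgFilt$, the defining condition $\supp \ol M \subset \{0\}$ of $\dgFilt^0$ is equivalent to $\supp H(\ol M) \subset \{0\}$, i.e.\ to $M \in \dgPer^{\leq 0} \cap \dgPer^{\geq 0} = \dgPer^0 = \heart$. This both proves $\dgFilt^0 \subset \heart$ and records the membership criterion used below.

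For essential surjectivity, let $M$ be an object of $\heart \subset \dgPer = \dgPrae$. Theorem~\ref{t:filt-iso-prae} supplies an object $M'$ of $\dgFilt$ with $M' \cong M$ in $\dgHot$. Since $M \mapsto \ol M$ is a functor and cohomology sends isomorphisms in $\dgHot$ to isomorphisms of graded $A^0$-modules, we get $\supp H(\ol{M'}) = \supp H(\ol M) \subset \{0\}$, so $M'$ also lies in $\heart$; by the criterion just established, $M' \in \dgFilt^0$. As $M' \cong M$, the fully faithful inclusion $\dgFilt^0 \subset \heart$ is essentially surjective, hence an equivalence, and the abelian structure on $\dgFilt^0$ is transported from the abelian category $\heart$.

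The argument is almost entirely formal, so I expect no real obstacle. The single point that uses the special structure of $\dgFilt$, rather than soft properties of t-structures, is the vanishing of the differential on $\ol M$ for $M \in \dgFilt$; this is exactly what matches the support condition defining $\dgFilt^0$ against the cohomological-support condition defining the heart. Everything else reduces to invariance of these conditions under isomorphism in $\dgHot$ and the general fact that hearts are abelian.
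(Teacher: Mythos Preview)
Your proof is correct and follows exactly the approach the paper takes: the paper also derives the result from Remark~\ref{rem:tstruktur-dgfilt} (matching the support conditions) together with the equivalence $\dgFilt \subset \dgPrae = \dgPer$ of Theorems~\ref{t:filt-iso-prae} and~\ref{t:t-cat-prae}. You have simply made explicit the steps the paper compresses into the phrase ``from the above discussion it is clear.''
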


Let $\dgFilMod$ be the full subcategory of
$\dgMod$ with the same objects as $\dgFilt^0$.
We will show in Propositions \ref{p:dgfilmod-abelian-sub} and
\ref{p:dgfilmod-equi} that $\dgFilMod$ is a full abelian subcategory
of $\dgMod$ and that the obvious functor $\dgFilMod \ra \dgFilt^0$ is an
equivalence. Hence the abelian structure on $\dgFilt^0$ is the 
most obvious one.

\begin{proposition}
  \label{p:dgfilmod-abelian-sub}
  The category $\dgFilMod$ is abelian and the inclusion
  of $\dgFilMod$ in $\dgMod$ is exact.
  In short, $\dgFilMod$ is a full abelian subcategory of $\dgMod$. 
\end{proposition}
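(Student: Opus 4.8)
The plan is to verify the standard criterion that a full subcategory of an abelian category containing $0$ and closed under finite direct sums and under forming kernels and cokernels (computed in the ambient category) is abelian, with exact inclusion. Since $\dgMod(\mathcal{A})$ is abelian with kernels and cokernels computed degreewise, it suffices to show that for every morphism $f\colon M\ra N$ in $\dgMod(\mathcal{A})$ between objects $M,N$ of $\dgFilMod$ (i.e.\ of $\dgFilt^0$) the kernel and cokernel of $f$ formed in $\dgMod(\mathcal{A})$ again lie in $\dgFilt^0$; containment of $0$ and closure under finite direct sums are clear, as one may concatenate $\hat L$-filtrations.

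First I would push the computation of $\ker f$ and $\coker f$ down to the semisimple ring $A^0$. An object of $\dgFilt^0$ is generated in degree zero and its underlying graded module is $\hat P=P\otimes_{A^0}A$ for $P=\ol M=M^0$ (Remark \ref{rem:objects-dgfilt}). By the adjunction between $\pro_{A^0}^A$ and restriction used in the proof of Lemma \ref{l:eegproj}, the degree-zero $A$-linear map $f$ is determined by $\ol f\colon\ol M\ra\ol N$ and agrees with $\ol f\otimes_{A^0}\id_A$ as a morphism of graded $A$-modules. Since $A^0$ is semisimple, $(?\otimes_{A^0}A)$ is exact, so $\ker f\cong\widehat{\ker\ol f}$ and $\coker f\cong\widehat{\coker\ol f}$ as a dg submodule of $M$ and a dg quotient of $N$, with $\ker\ol f\subset\ol M$ and $\bild\ol f\subset\ol N$ the evident $A^0$-submodules. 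In particular the underlying graded modules of $\ker f$ and $\coker f$ are finite direct sums of unshifted $\hat L_x$.

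The decisive step is to upgrade these graded isomorphisms to genuine $\hat L$-filtrations, and here is the point where care is needed: it is \emph{not} true that every dg module with underlying graded module a sum of unshifted $\hat L_x$ lies in $\dgFilt^0$ (acyclic, hence contractible, dg modules of this shape exist), so a filtration must really be transported. An $\hat L$-filtration of $M$ corresponds to a chain $0=P_0\subset\dots\subset P_n=\ol M$ of $A^0$-submodules with $P_j/P_{j-1}\cong L_{x_j}$ simple and each $\widehat{P_j}$ a dg submodule. Intersecting this chain with $\ker\ol f$ and applying the exact functor $(?\otimes_{A^0}A)$ produces dg submodules $\ker f\cap\widehat{P_j}=\widehat{\ker\ol f\cap P_j}$ of $\ker f$ whose successive subquotients are $\widehat{(\ker\ol f\cap P_j)/(\ker\ol f\cap P_{j-1})}$; each inner quotient injects into the simple module $L_{x_j}$ and so equals $0$ or $L_{x_j}$, making each subquotient $0$ or $\hat L_{x_j}$. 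Discarding repetitions yields an $\hat L$-filtration of $\ker f$ with all shifts zero (so the monotonicity condition holds trivially), whence $\ker f\in\dgFilt^0$. Projecting a chain for $\ol N$ onto $\coker\ol f=\ol N/\bild\ol f$ handles $\coker f$ symmetrically. This gives the required closure, and the criterion then yields that $\dgFilMod$ is a full abelian subcategory of $\dgMod(\mathcal{A})$ with exact inclusion.
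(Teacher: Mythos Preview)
Your proof is correct and takes essentially the same route as the paper: induce the given $\hat L$-filtration on the kernel (resp.\ cokernel) and show that each graded piece is either $0$ or $\hat L_{x_j}$; the paper packages this via a single claim about dg submodules $\mathcal{U}\subset\mathcal{M}$ whose underlying graded module lies in the essential image of $\pro_{A^0}^A$, together with the short exact sequence $0\to\Gr_i(\mathcal{U})\to\Gr_i(\mathcal{M})\to\Gr_i(\mathcal{M}/\mathcal{U})\to 0$, whereas you work more explicitly through the equivalence with $\Mod(A^0)$ and treat kernel and cokernel separately. One step worth making fully explicit, in view of your own parenthetical caveat: the subquotient of $\ker f$ is $\hat L_{x_j}$ \emph{as a dg module} (not merely as a graded $A$-module) because it embeds as a dg submodule into $F_j/F_{j-1}\cong\hat L_{x_j}$, and any nonzero such embedding is an isomorphism of graded $A$-modules (Lemma~\ref{l:zero-or-iso}), hence of dg modules.
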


We need the following remark in the proof.
\begin{remark}\label{rem:essim}
  We view the extension of scalars functor 
  $\pro_{A^0}^A$ from \eqref{eq:prodhat} as a functor 
  $\pro_{A^0}^A: \Mod(A^0) \ra \gMod(A)$.
  Since $\pro_{A^0}^A$ is exact and fully faithful,
  its essential image $\mathcal{B}$ is closed under taking kernels,
  cokernels, images and pull-backs. In
  particular, if $U$ and $V$ are graded submodules of $M$ with $U$, $V$ and
  $M$ in $\mathcal{B}$, then $U \cap V$ and $U+V$ are in $\mathcal{B}$.
\remarkend
\end{remark}

\begin{proof}[Proof of Proposition \ref{p:dgfilmod-abelian-sub}]
  Let $f: \mathcal{M} \ra \mathcal{N}$ be a morphism in $\dgFilMod$. 
  Let $\mathcal{K}=(K,d)$ and $\mathcal{Q}$ be the kernel and cokernel of $f$
  in the abelian category $\dgMod$.
  It is sufficient to prove that $\mathcal{K}$ and $\mathcal{Q}$
  belong to $\dgFilMod$.

  Since the underlying graded $A$-module of an object of $\dgFilMod$
  is in the essential image $\mathcal{B}$ of
  $\pro_{A^0}^A$, the kernel $K$ lies in $\mathcal{B}$ by Remark
  \ref{rem:essim}. 
    
  Let $\mathcal{I}\subset \mathcal{N}$ be the image of $f$ in
  $\dgMod$. Since $\mathcal{M}/\mathcal{K} \sira \mathcal{I}$ and
  $\mathcal{Q} \sira \mathcal{N}/\mathcal{I}$, it is sufficient to
  prove:

  \textbf{Claim:} If $\mathcal{U}=(U,d_U) \subset \mathcal{M}$ is an
  $\mathcal{A}$-submodule of $\mathcal{M} \in \dgFilMod$ with
  $U$ in $\mathcal{B}$, then 
  $\mathcal{U}$ and the quotient $\mathcal{M}/\mathcal{U}$ in $\dgMod$
  are objects of $\dgFilMod$.

  \textbf{Proof of the claim:} Let $(F_i(\mathcal{M}))$ be an
  $\hat L$-filtration of $\mathcal{M}$ with subquotients
  $F_i(\mathcal{M})/F_{i-1}(\mathcal{M})\cong \hat{L}_{x_i}$, and  
  $(F_i(\mathcal{U}))$ and $(F_i(\mathcal{M}/\mathcal{U}))$ the induced filtrations of
  $\mathcal{U}$ and $\mathcal{M}/\mathcal{U}$. 
  The underlying graded modules of all steps of all these filtrations are
  in $\mathcal{B}$: For $F_i(\mathcal{M})$ this is obvious, and for
  $F_i(\mathcal{U})$ and $F_i(\mathcal{M}/\mathcal{U})$ this is a
  consequence of Remark \ref{rem:essim}.
  Consider the sequence of finitely filtered objects
  \begin{equation*}
    0 
    \ra (\mathcal{U}, F_i(\mathcal{U})) 
    \xra{\upsilon} (\mathcal{M}, F_i(\mathcal{M})) 
    \xra{\pi} (\mathcal{M}/\mathcal{U}, F_i(\mathcal{M}/\mathcal{U}))
    \ra 0.
  \end{equation*}
  Let $\Gr_i(\mathcal{M})=F_i(\mathcal{M})/F_{i-i}(\mathcal{M})$ be
  the $i$-th component of the associated graded object of the filtered
  object $(\mathcal{M}, (F_i(\mathcal{M})))$, and similarly for other
  filtered objects and morphisms.
  Since $\mathcal{U}$ and
  $\mathcal{M}/\mathcal{U}$ are equipped with the induced filtrations,
  we obtain short exact sequences 
  \begin{equation*}
    0 
    \ra \Gr_i(\mathcal{U})
    \xra{\Gr_i(\upsilon)}
    \Gr_i(\mathcal{M})
    \xra{\Gr_i(\pi)}
    \Gr_i(\mathcal{M}/\mathcal{U})
    \ra
    0
  \end{equation*}
  for all $i$ (see \cite[1.1.11]{HodgeII}; this follows easily from the nine lemma).
  All underlying graded modules are in
  $\mathcal{B}$ (Remark \ref{rem:essim}). 
  Since $\Gr_i(\mathcal{M}) \cong \hat{L}_{x_i}$ is simple in
  $\mathcal{B}$, 
  either $\Gr_i(\upsilon)$ is an isomorphism and
  $\Gr_i(\mathcal{M}/\mathcal{U})=0$, or
  $\Gr_i(\mathcal{U})=0$ and $\Gr_i(\pi)$ is an isomorphism.
  We deduce that $\mathcal{U}$ and $\mathcal{M}/\mathcal{U}$
  are in $\dgFilMod$.
\end{proof}

\begin{proposition}
  \label{p:dgfilmod-equi}
  The obvious functor $\dgFilMod \ra \heart$ is an equivalence of
  categories. Any object in $\heart$ has finite length, and  
  the simple objects in $\heart$ are (up to isomorphism) the
  $\{\hat{L}_x\}_{x \in W}$.
\end{proposition}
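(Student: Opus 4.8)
The plan is to factor the functor in question as $\dgFilMod \ra \dgFilt^0 \sira \heart$, the second arrow being the equivalence of Proposition \ref{p:dgfiltzero-heart}. It therefore suffices to prove that the first functor, which sends a dg module to itself and a morphism to its homotopy class, is an equivalence. Since $\dgFilMod$ and $\dgFilt^0$ have the same objects, essential surjectivity is immediate, and everything reduces to full faithfulness: for $\mathcal{M}, \mathcal{N} \in \dgFilMod$ one must show that the canonical map $\Hom_{\dgMod}(\mathcal{M}, \mathcal{N}) \ra \Hom_{\dgHot}(\mathcal{M}, \mathcal{N})$ is bijective.

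Surjectivity holds by the definition of the homotopy category, so the point is injectivity, i.e.\ that a null-homotopic morphism $f \colon \mathcal{M} \ra \mathcal{N}$ already vanishes. I would write $f = d_N h + h d_M$ for a degree $-1$ graded $A$-linear homotopy $h$ and then exploit the degree restrictions: by \eqref{eq:dgFilt-as-graded} the underlying graded module $M$ of an object of $\dgFilt^0$ is a finite direct sum of unshifted $\hat L_x$, hence is generated in degree zero, and likewise $N^i = 0$ for $i < 0$. Then $h(M^0) \subset N^{-1} = 0$, so by $A$-linearity $h(M^0 A) = h(M^0)A = 0$; since $M = M^0 A$ this forces $h = 0$ and thus $f = 0$. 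This injectivity is the one genuinely delicate point, though the argument itself is short; the rest is formal.

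For the remaining assertions I would transport the statements along the equivalence and argue inside the abelian category $\dgFilMod$ of Proposition \ref{p:dgfilmod-abelian-sub}. Each $\hat L_x$ is simple there: a subobject is an $\mathcal{A}$-submodule whose underlying graded module lies in the essential image $\mathcal{B}$ of $\pro_{A^0}^A$, and under $\mathcal{B} \simeq \Mod(A^0)$ the object $\hat L_x$ corresponds to the simple $A^0$-module $L_x$, so its only such submodules are $0$ and $\hat L_x$. For an arbitrary $\mathcal{M}$ an $\hat L$-filtration $(F_i(\mathcal{M}))$ has all steps in $\dgFilMod$ (their underlying graded modules lie in $\mathcal{B}$ by \eqref{eq:dgFilt-as-graded}) with simple subquotients $F_i/F_{i-1} \cong \hat L_{x_i}$, hence is a composition series of length $\lambda(\ol M)$; so every object has finite length. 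Finally any simple object occurs as such a subquotient and is therefore isomorphic to some $\hat L_x$, while Lemma \ref{l:zero-or-iso} (which gives $\Hom_{\gMod(A)}(\hat L_x, \hat L_y) = 0$ for $x \neq y$) shows the $\hat L_x$ are pairwise non-isomorphic simples. Passing back through the equivalence yields the description of the simple objects of $\heart$ and the finite-length property. The main obstacle is the injectivity in the full faithfulness step; everything else is bookkeeping with the filtrations already produced in Theorem \ref{t:filt-iso-prae} and Proposition \ref{p:dgfilmod-abelian-sub}.
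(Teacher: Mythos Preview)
Your proposal is correct and follows essentially the same route as the paper: factor through $\dgFilt^0$ via Proposition~\ref{p:dgfiltzero-heart}, kill homotopies by the degree constraint (both modules generated in degree zero forces any degree $-1$ map to vanish), and then read off finite length and the classification of simples from an $\hat L$-filtration together with Lemma~\ref{l:zero-or-iso}. The only cosmetic difference is that you argue simplicity of $\hat L_x$ via the equivalence $\mathcal{B}\simeq \Mod(A^0)$, whereas the paper instead notes that any nonzero subobject contains some $\hat L_y$ and invokes Lemma~\ref{l:zero-or-iso}; the paper explicitly mentions Remark~\ref{rem:essim} as an alternative, which is exactly your line.
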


\begin{proof}
  Let ${M}$, ${N}$ be in $\dgFilMod$. 
  Since both ${M}$ and ${N}$ are
  generated in degree zero, any homotopy $M \ra\{-1\}N$ is zero. So 
  the canonical map 
  \begin{equation*}
    \Hom_{\dgMod}({M},{N}) \ra \Hom_{\dgHot}({M},{N})
  \end{equation*}
  is an isomorphism, and the obvious functor $\dgFilMod \ra \dgFilt^0$
  is an equivalence. In combination with Proposition
  \ref{p:dgfiltzero-heart}, this shows the first statement.

  We prove the remaining statements for $\dgFilMod$.
  If ${M}\not= 0$ is in $\dgFilMod$, the first step
  of any $\hat{L}$-filtration yields a monomorphism $\hat{L}_y \hra
  {M}$. 
  If ${M}$ is simple, this must be an
  isomorphism. 
  Any non-zero subobject of $\hat{L}_x$ has a subobject isomorphic to
  some $\hat{L}_y$. So Lemma \ref{l:zero-or-iso} (or Remark
  \ref{rem:essim}) shows that  
  the $\{\hat{L}_x\}_{x \in W}$ are (up to isomorphism) the simple
  objects of $\dgFilMod$ and pairwise non-isomorphic. 
  Each object of $\dgFilMod$ has finite length, since it has an
  $\hat{L}$-filtration.
\end{proof}

\begin{remark}
  We could establish the existence of our t-structure on
  $\dgPrae$ and hence (cf. the proof of Theorem \ref{t:t-cat-prae}) on
  $\dgPer$ also  
  by starting from its potential heart, using \cite[1.3.13]{BBD}, as
  follows:
  \begin{enumerate}
  \item Define $\mathcal{C}$ to be the full subcategory of $\dgHot$
    whose objects are those isomorphic to an object of $\dgFilt^0$.
  \item Show that $\mathcal{C}$ is an admissible abelian subcategory
    of $\dgHot$ and closed under extensions.
    (Proposition \ref{p:dgfilmod-abelian-sub} equips $\dgFilt^0$ and
    hence $\mathcal{C}$ with the structure of an abelian category. 
    Short exact sequences in $\dgFilt^0$ are $A$-split and can be
    completed into distinguished triangles. Hence $\dgFilt^0$ and
    $\mathcal{C}$ are abelian admissible (using the octahedral axiom
    one can also conclude directly from the proof of Proposition
    \ref{p:dgfilmod-abelian-sub} that any morphism in $\mathcal{C}$ is
    $\mathcal{C}$-admissible). It is easy to see that any
    extension between objects of $\dgFilt^0$ is isomorphic to an
    object of $\dgFilt^0$. Hence $\mathcal{C}$ is stable by
    extensions.)
  \item The smallest strict full triangulated
    subcategory of $\dgHot$ containing
    all the $\{n\}\mathcal{C}$, for $n \in \DZ$, is $\dgPrae$ (easy).
  \end{enumerate}
  Then one can deduce
  Theorem \ref{t:filt-iso-prae}.
\remarkend
\end{remark}

\begin{proposition}
  \label{p:injective-in-dgheart}
  An object ${M}$ in the heart $\heart$ is injective if and only if
  $H^1({M})=0$.
\end{proposition}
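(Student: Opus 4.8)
The plan is to test injectivity of $M$ in the abelian category $\heart$ against the simple objects. By Proposition \ref{p:dgfilmod-equi} every object of $\heart$ has finite length and the simple objects are exactly the $\hat{L}_x$ for $x \in W$. An object $M$ is injective precisely when $\Ext^1(X,M)=0$ for all $X \in \heart$, and since Yoneda $\Ext^1$ sits in long exact sequences in the first variable, a standard d\'evissage along a composition series reduces this to the simple objects. Thus the first step is the reduction
\[
  M \text{ injective} \iff \Ext^1(\hat{L}_x, M) = 0 \text{ for all } x \in W,
\]
and it remains to identify $\Ext^1(\hat{L}_x, M)$ with a piece of $H^1(M)$.

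For the identification I would invoke the standard description of $\Ext^1$ in the heart of a t-structure: a short exact sequence $0 \ra M \ra E \ra \hat{L}_x \ra 0$ in $\heart$ completes to a distinguished triangle $M \ra E \ra \hat{L}_x \ra \{1\}M$, giving a natural isomorphism $\Ext^1_\heart(\hat{L}_x, M) \cong \Hom_{\dgPer}(\hat{L}_x, \{1\}M)$ (a general property of hearts, cf.\ \cite{BBD}). Since $\dgPer$ is a full subcategory of $\dgHot$, this equals $\Hom_{\dgHot}(\hat{L}_x, \{1\}M)$.

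Now I compute this Hom-group using \eqref{eq:Ahopro}. Each $\hat{L}_x$ is a direct summand of $\mathcal{A}$ cut out by a primitive idempotent $e_x \in A^0$ with $\hat{L}_x \cong e_x\mathcal{A}$, and $d(e_x)=0$. Restricting the isomorphism \eqref{eq:Ahopro} to this summand (equivalently, using the extension–restriction adjunction along $\mathcal{A}^0 \hra \mathcal{A}$ together with the semisimplicity of $A^0$) gives, naturally in $N$,
\[
  \Hom_{\dgHot}(\hat{L}_x, N) \cong \Hom_{A^0}(L_x, H^0(N)).
\]
Taking $N = \{1\}M$ and using $H^0(\{1\}M) = H^1(M)$ then yields $\Ext^1(\hat{L}_x, M) \cong \Hom_{A^0}(L_x, H^1(M))$.

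Finally I would assemble the two directions. The $A^0$-module $H^1(M)$ is zero if and only if $\Hom_{A^0}(L_x, H^1(M)) = 0$ for every $x \in W$, because $A^0$ is semisimple and the $L_x$ exhaust its simple modules. Combined with the previous steps, this says exactly that $H^1(M)=0$ if and only if $\Ext^1(\hat{L}_x, M)=0$ for all $x$, which by the d\'evissage is equivalent to $M$ being injective, proving both implications at once. I expect the only point demanding care to be the natural identification $\Ext^1_\heart(\hat{L}_x, M) \cong \Hom_{\dgHot}(\hat{L}_x, \{1\}M)$ and the subsequent evaluation of this group via \eqref{eq:Ahopro}; the rest is formal. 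It is worth noting that the argument never analyses the differential of $M$ directly—the differential enters only through the invariant $H^1(M)$.
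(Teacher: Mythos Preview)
Your proof is correct and follows essentially the same approach as the paper: reduce to the simple objects $\hat L_x$ via Proposition~\ref{p:dgfilmod-equi}, identify $\Ext^1_{\heart}(\hat L_x,M)$ with $\Hom_{\dgHot}(\hat L_x,\{1\}M)$ via \cite[3.1.17.(ii)]{BBD}, and evaluate using \eqref{eq:Ahopro}. The only cosmetic difference is that the paper bundles the $\hat L_x$ back into $\mathcal{A}$ and reads off $H^1(M)$ directly, whereas you keep the simples separate and use semisimplicity of $A^0$ at the end; these are equivalent.
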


\begin{proof}
  $M$ is injective if and only if $\Ext^1_{\heart}(N, {M})$ vanishes for all
  $N \in \heart$. By
  Proposition \ref{p:dgfilmod-equi} it is enough to check this for $N
  \in \{\hat L_x\}_{x \in W}$. Since $\mathcal{A}$ is isomorphic to
  a direct sum of the $\hat L_x$ with positive multiplicities, $M$ is
  injective if and only if 
  \begin{equation*}
    \Ext^1_{\heart}(\mathcal{A}, {M})=\Hom_{\dgHot}(\mathcal{A},
    \{1\}{M})= H^1({M})
  \end{equation*}
  vanishes (here the first equality follows from \cite[3.1.17.(ii)]{BBD}, the second
  one from \eqref{eq:Ahopro}).
\end{proof}

\begin{example}
  \label{ex:A-injective1}
  Assume that $H^1({\mathcal{A}})=0$ (since $d(A^0)=0$ this is
  equivalent to $Z^1(\mathcal{A})=0$). 
  Then $\mathcal{A}$ is an injective object of $\heart$, and any simple object $\hat L_x$ is
  injective (and projective). Hence any object of $\heart$ is a direct
  sum of simple objects.
\exampleend
\end{example}

If $R$ is an arbitrary ring, we denote by $\Mofover{R}$ the category
of finitely generated left $R$-modules.

\begin{proposition}
  \label{p:heart-module-category}
  Let $I$ be an object of the $\heart$. Assume that
  \begin{enumerate}
  \item $H^1(I)=0$ (equivalently: $I$ is injective in $\heart$),
  \item any simple object of the heart $\heart$ is contained in $I$,
    i.\,e.\ for all $x \in W$, there is a monomorphism $\hat L_x \ra I$ in
    $\heart$. (This is true, for example, if there is an inclusion
    $\mathcal{A} \subset I$ of dg modules.)
  \end{enumerate}
  Then the functor
  $\Hom_{\heart}(?,I): \heart \ra (\Modover{\End_{\heart}(I)})^\opp$
  induces an equivalence 
  \begin{equation*}
    \Hom_{\heart}(?,I): \heart \sira (\Mofover{\End_{\heart}(I)})^\opp.
  \end{equation*}
  between the heart $\heart$ and the opposite category of the category of
  finitely generated left $\End_{\heart}(I)$-modules.
\end{proposition}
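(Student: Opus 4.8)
The plan is to show that $D:=\Hom_{\heart}(?,I)$, viewed as a contravariant functor $\heart\ra\Modover{E}$ where $E=\End_{\heart}(I)$ acts on $\Hom_{\heart}(M,I)$ by post-composition (making it a left $E$-module), is an exact, fully faithful functor whose essential image is precisely the finitely generated left $E$-modules; this is exactly the asserted equivalence $\heart\sira(\Mofover{E})^\opp$. First I would record the two elementary inputs. Since $I$ is injective in $\heart$ (hypothesis (a) and Proposition \ref{p:injective-in-dgheart}), the contravariant functor $D$ is exact. Since by hypothesis (b) each simple $\hat L_x$ embeds into $I$ and every object of $\heart$ has finite length (Proposition \ref{p:dgfilmod-equi}), the socle of any $M$ embeds into a finite power $I^n$; injectivity of $I^n$ then extends this to a map $M\ra I^n$ which is a monomorphism, its kernel otherwise containing a simple subobject lying in $\soc M$. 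Thus every object embeds into some $I^n$. Applying $\Hom_{\heart}(?,I)$ to such an embedding shows both that $D$ is faithful and that $DM$ is a quotient of $D(I^n)=E^n$, hence a finitely generated left $E$-module.

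Next I would prove full faithfulness. The crucial computation is on the additive hull of $I$: post-composition identifies $\Hom_{\heart}(I^a,I^b)$ with the left $E$-module homomorphisms $E^b\ra E^a$, so $D$ is fully faithful on finite direct sums of copies of $I$ and sends them to the free modules. To pass to arbitrary $M$, $N$ I would use injective copresentations $0\ra N\ra I^{m_0}\xra{\beta}I^{m_1}$, available from the embedding statement above applied to $N$ and then to $\Cok(N\hra I^{m_0})$. Applying the left-exact functor $\Hom_{\heart}(M,?)$ exhibits $\Hom_{\heart}(M,N)$ as the kernel of a map $(DM)^{m_0}\ra(DM)^{m_1}$, while applying the exact contravariant $D$ to the copresentation yields a free presentation $E^{m_1}\ra E^{m_0}\ra DN\ra 0$, so that $\Hom_{E}(DN,DM)$ is the kernel of the \emph{same} map $(DM)^{m_0}\ra(DM)^{m_1}$ (here the additive-hull computation identifies the connecting map). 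These kernels agree, and the identification is induced by $D$ itself; hence $D$ is fully faithful.

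For the essential image I would first observe that it consists exactly of the finitely presented left $E$-modules: $DM$ is finitely presented by the copresentation just used, and conversely any finitely presented $V=\Cok(\rho\colon E^{m_1}\ra E^{m_0})$ equals $D(\Kern\beta)$, where $\beta\colon I^{m_0}\ra I^{m_1}$ is the morphism with $D\beta=\rho$ supplied by full faithfulness on the additive hull of $I$. The remaining point — and the step I expect to be the main obstacle — is to match \emph{finitely presented} with the \emph{finitely generated} of the statement. I would resolve this by showing that $E$ is left Artinian. Fixing a composition series $0=I_0\subset I_1\subset\dots\subset I_\ell=I$ of $I$ in $\heart$ (with $I_j/I_{j-1}\cong\hat L_{x_j}$), the subsets $\{f\in E\mid f(I_{\ell-j})=0\}$ form an increasing chain of left ideals filtering ${}_EE$, whose successive subquotients embed as left $E$-modules into $\Hom_{\heart}(\hat L_{x}, I)$. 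Each such $\Hom_{\heart}(\hat L_x,I)$ is finite-dimensional over the division ring $\End_{A^0}(L_x)$ (acting on the right, by Schur), and since the left $E$-action commutes with this right action, its $E$-submodules are right subspaces, forcing finite length. Hence ${}_EE$ has finite length and $E$ is left Artinian, so finitely generated, finitely presented, and finite-length left $E$-modules all coincide.

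Combining these, $D$ is an exact, fully faithful functor with essential image all of $\Mofover{E}$, which is the desired equivalence $\heart\sira(\Mofover{E})^\opp$; note that this also uses the semisimplicity of $\End_{\gMod(A)}(I)\cong\prod_x\Mat_{n_x}(\End_{A^0}(L_x))$, inside which $E$ sits as the degree-zero cocycles of $\complexEnd(I)$, to control the endomorphism ring. Finally, the remaining assertions — that objects have finite length and the simples are the $\hat L_x$ — transport across the duality from Proposition \ref{p:dgfilmod-equi}. I regard the Artinianness of $E$ (equivalently, the passage from finitely presented to finitely generated) as the genuinely delicate part, the rest being the standard formalism of an injective cogenerator in a length category.
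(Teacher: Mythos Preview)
Your argument is a self-contained unpacking of what the paper dispatches in one line as ``a standard result'': the paper simply notes that $I$ is an injective cogenerator of the length category $\heart$, hence a projective generator of $\heart^\opp$, and invokes the Morita-type theorem that $\Hom_{\heart^\opp}(I,?)$ is then an equivalence onto $\Mof(\End_{\heart^\opp}(I))=\Mofover{\End_\heart(I)}$. Your route---exactness from injectivity, embeddings into powers of $I$, full faithfulness via injective copresentations, essential image $=$ finitely presented left $E$-modules, then left Artinianness of $E$---is precisely how one proves that standard result, so the two approaches agree in substance; yours is just explicit where the paper is terse.

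There is, however, a genuine gap in your Artinianness step. You correctly filter ${}_EE$ so that the subquotients embed into $\Hom_\heart(\hat L_x,I)$, and you correctly note this is finite-dimensional as a right module over the division ring $D_x:=\End_\heart(\hat L_x)$. But the inference ``since the left $E$-action commutes with this right action, its $E$-submodules are right subspaces'' does not follow from commutation alone: in an arbitrary bimodule, one-sided submodules need not be stable under the other side. What actually makes your conclusion true is the injectivity of $I$: if $0\neq f\in\Hom_\heart(\hat L_x,I)$ then $f$ is a monomorphism (as $\hat L_x$ is simple), and for any $g\in\Hom_\heart(\hat L_x,I)$ injectivity of $I$ furnishes $e\in E$ with $e\circ f=g$. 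Hence $Ef=\Hom_\heart(\hat L_x,I)$, so this module is in fact \emph{simple} as a left $E$-module, which immediately gives ${}_EE$ finite length. (Incidentally this also rescues your $D_x$-stability claim: for $w\neq 0$ and $d\in D_x$, injectivity supplies $e\in E$ with $e\circ w=w\circ d$, so $wd\in Ew$. But commutation by itself is not the reason.)

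Two smaller points. The sentence about ``the semisimplicity of $\End_{\gMod(A)}(I)$'' and $E$ sitting inside it as ``degree-zero cocycles of $\complexEnd(I)$'' is neither correct ($E=\End_{\dgMod}(I)$, not a cocycle group) nor used anywhere in your argument; drop it. And your final sentence about ``the remaining assertions'' transporting from Proposition~\ref{p:dgfilmod-equi} is confused: those facts (finite length, the $\hat L_x$ are the simples) are \emph{inputs} you have already used, not conclusions of this proposition.
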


\begin{proof}
  Every simple object of the artinian category $\heart$ is
  contained in the injective object $I$.
  Hence $I$ is a projective generator of the opposite
  category $\heart^\opp$, and a standard result yields an equivalence
  \begin{equation*}
    \Hom_{\heart^\opp}(I,?): \heart^\opp \sira \Mof(\End_{\heart^\opp}(I))=\Mofover{\End_{\heart}(I)}.
  \end{equation*}
  Now pass to the opposite categories.
\end{proof}

\begin{example}
  \label{ex:A-injective2}
  (Example~\ref{ex:A-injective1} continued.)
  Assume that $H^1({\mathcal{A}})=0$.
  We always have $\End_{\heart}(\mathcal{A})=H^0(\mathcal{A})=A^0$
  as rings (cf.\ \eqref{eq:Ahopro}).
  Hence we obtain an equivalence
  \begin{equation*}
    \Hom_{\heart}(?,\mathcal{A}): \heart \sira (\Mofover{A^0})^\opp,
  \end{equation*}
  i.\,e.\ the semisimple ring $A^0$ governs $\heart$. This example
  shows that in general the bounded derived category of the heart and
  $\dgPer(\mathcal{A})$ are not equivalent.
  \exampleend
\end{example}
\begin{example}
  \label{ex:A-truncated-poly}
  Assume that $k$ is a field and let $A=k[x]/x^{n+1}$ with $x$
  of degree $1$ and  $n \geq 2$. Let $\mathcal{A}=(A, d=0)$ and
  $M=A\oplus A$ as a graded $A$-module. Viewing elements of $M$ 
  as column vectors, we define a differential on $M$ by 
  $d_M= \tzmat 0x00$.
  This turns $M$ into a dg $\mathcal{A}$-module.
  (Note that, given any dg algebra $\mathcal{B}=(B, d=0)$ with vanishing
  differential, the differential of any (right) dg $\mathcal{B}$-module is a
  $B$-linear map (with square zero).)
  Here is our picture of $M$ for $n=2$ (note that the first summand in
  $M=A\oplus A$ corresponds to the lower row):
  \begin{equation*}
    \xymatrix@R=8pt@C=15pt{
      {\text{degree:}} & {-1} & {0} & {1} & {2} &
      {3} & {\dots} \\
      & {} & {k} \ar[rd]^{x} &
      {kx} \ar[rd]^{x} & {kx^2}
       & {} \\
      & {} & {k} &
      {kx} & {kx^2}
       & {} 
      }
  \end{equation*}
  We have $H(M)=k\oplus kx^n$, in particular $H^1(M)=0$, and $M$ is
  injective. Since $\mathcal{A} \subset M$, $a \mapsto \svek a0$ is a submodule, we obtain an equivalence
  \begin{equation*}
    \Hom_{\heart}(?,M): \heart \sira (\Mofover{\End_{\heart}(M)})^\opp.
  \end{equation*}
  Obivously $\End_{\heart}(M)=\big\{
  \tzmat {\lambda}{\mu}{0}{\lambda} \mid \lambda, \mu \in k\big\}\subset
  \Mat(2,k)$, which is isomorphic to $k[T]/T^2$ (for all $n \geq 2$).
\exampleend
\end{example}

\begin{remark}
\label{rem:socle-filt}
In example~\ref{ex:A-truncated-poly} the
endomorphism ring of the object $M$ consists of upper triangular
matrices. The reason for this is that morphisms respect the socle
filtration (in the example the socle filtration is $0 \subset A \oplus 0 \subset M$):

Since the heart $\heart$ is an artinian
category, any object $M$ has a largest semisimple subobject (an object
is semisimple if it is isomorphic to the direct sum of simple objects). 
This object is
called the socle of $M$ and denoted $\soc M$.
The socle filtration of $M$ is the unique increasing filtration
$(\soc_i M)_{i \geq 0}$ of $M$ such that $\soc_{i+1} M / \soc_i M = \soc(M / \soc_i
M)$ and $\soc_0 M = 0$. Morphisms respect the socle filtration. 

The socle is easy to describe for
objects $M \in \dgFilMod$: Let $Z^0(M)$ be the $0$-cycles in $M$, i.\,e.\ the kernel of
$d_M|_{M^0}:M^0 \ra M^1$. Obviously, multiplication 
$Z^0(M) \otimes_{\mathcal{A}^0} \mathcal{A} \ra M$,
$z \otimes a \mapsto za$, defines a monomorphism of dg modules. We
claim that its image $Z^0(M)A \subset M$ is the socle of $M$:
\begin{equation*}
  Z^0(M)A =\soc M.
\end{equation*}
Since $Z^0(M)A$ is semisimple, we have $Z^0(M)A\subset \soc M$. On the
other hand, if $\hat L_x \subset \soc M$, then $\hat L_x^0 \subset Z^0(M)$
(since $d(\hat L_x^0)=0$), and hence $\hat L_x=\hat L_x^0 A \subset Z^0(M)A$. This
implies $\soc M \subset Z^0(M)A$.

Since $M/\soc M$ is in $\dgFilMod$ again, this yields an inductive
description of the socle filtration of $M$.

We also see that the subquotients of the socle filtration of $M \in
\dgFilMod$ are projective as graded $A$-modules. This shows that $M$
has a direct sum decomposition
\begin{equation*}
  M = M_1 \oplus M_2 \oplus \dots \oplus M_m 
\end{equation*}
as a graded $A$-module 
such that 
\begin{equation*}
  \soc_i M = M_1 \oplus \dots \oplus M_i
\end{equation*}
as dg modules (for $1 \leq i \leq m$).

Let $N$ be another object of $\dgFilMod$ and choose a similar
decomposition $N =N_1 \oplus \dots \oplus N_n$ with $\soc_j N= N_1
\oplus \dots \oplus N_j$.
Since morphisms of dg $\mathcal{A}$-modules are in particular morphisms of graded
$A$-modules we have an inclusion 
\begin{align*}
  \Hom_{\heart}(M,N)=\Hom_{\dgMod(\mathcal{A})}(M,N) & \subset
  \Hom_{\gMod(A)}(M,N)\\ 
  & =\bigoplus_{i,j}\Hom_{\gMod(A)}(M_i,N_j).
\end{align*}
If we view elements of $\Hom_{\gMod(A)}(M,N)$ as matrices, any element
of $\Hom_{\heart}(M,N)$ is upper triangular since it respects the
socle filtration.
\remarkend
\end{remark}

\section{Homotopically Minimal DG Modules}
\label{sec:homot-minim-dg}

We prove that the objects of $\dgFilt(\mathcal{A})$ are precisely the
so called 
homotopically minimal
objects in $\dgPer(\mathcal{A})$ and give alternative
characterizations of these objects. We continue to assume that $\mathcal{A}$ satisfies
\ref{enum:pg}-\ref{enum:sdga}. 

We say that a dg module $M$ is \define{homotopically minimal} 
(cf.\ \cite{AFH}, and
\cite{avramov-martsinkovsky}
or \cite[App.\ B]{krause-stable-derived} for complexes)
if any endomorphism $f:M \ra M$
in $\dgMod(\mathcal{A})$ that is a homotopy equivalence (i.\,e.\ it
becomes an isomorphism in $\dgHot(\mathcal{A})$) is an isomorphism.

\begin{remark}
In \cite[11.4]{BL} (where $\mathcal{A}$ is a polynomial ring with generators in
positive even degrees) the term ``minimal $\mathcal{K}$-projective'' is
defined and it is proved that the minimal $\mathcal{K}$-projective
modules are homotopically minimal in our sense. The equivalent statements (in
particular \ref{enum:dgper-lengthminimal}) of the next proposition
show that the homotopically minimal dg modules in $\dgPer$ are ``minimal
$\mathcal{K}$-projective''.
\remarkend
\end{remark}

\begin{proposition}
  \label{p:dgper-tfae}
  Let $\mathcal{M}=(M,d)$ be in $\satzdgPer(\mathcal{A})$. Then the
  following statements are equivalent.
  \begin{enumerate}
  \item\label{enum:dgper-dgfilt} $\mathcal{M}$ is in $\satzdgFilt$;
  \item\label{enum:dgper-minimal} $\mathcal{M}$ is homotopically minimal;
  \item\label{enum:dgper-leq} for all $i$, $M_{\leq i}$ is an $\mathcal{A}$-submodule of
    $\mathcal{M}$, that is $d(M_{\leq i})\subset M_{\leq i}$;
  \item\label{enum:dgper-dplus} $d(M) \subset MA^+$;
  \item\label{enum:dgper-dzero} $\ol{\mathcal{M}}= \mathcal{M} \otimes_{\mathcal{A}}
    \mathcal{A}^0= \mathcal{M}/\mathcal{M}\mathcal{A}^+$ has differential
    $d=0$;
  \item\label{enum:dgper-length}
    $\length(\ol{{M}})=\length(H(\ol{\mathcal{M}}))$;
  \item\label{enum:dgper-lengthminimal} 
    $\lambda(\ol{M})$
    is finite and minimal in the
    isomorphism class of $\mathcal{M}$ in $\dgHot$.
  \end{enumerate}
\end{proposition}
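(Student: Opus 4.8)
The plan is to treat the seven conditions in three groups, starting with the purely graded-algebraic equivalences \ref{enum:dgper-leq}$\Leftrightarrow$\ref{enum:dgper-dplus}$\Leftrightarrow$\ref{enum:dgper-dzero}$\Leftrightarrow$\ref{enum:dgper-length}. The starting observation is that $M\mathcal{A}^+$ is always a dg submodule: for $m \in M$ and $a \in A^+$ one has $d(ma)=(dm)a \pm m\,d(a) \in M A^+$, since $d$ has degree $1$ and hence $d(A^+)\subset A^+$. Thus $\ol{\mathcal{M}}=\mathcal{M}/M\mathcal{A}^+$ carries an induced differential, and \ref{enum:dgper-dplus}$\Leftrightarrow$\ref{enum:dgper-dzero} is just the statement that this differential vanishes. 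For \ref{enum:dgper-leq}$\Leftrightarrow$\ref{enum:dgper-dplus} I would use the bookkeeping identity $(M_{\leq i})^{i+1}=\sum_{l \leq i} M^l A^{i+1-l} \subset M A^+$ (every exponent $i+1-l$ is positive): if $d(M)\subset M A^+$ then $m \in M^j$ with $j\leq i$ gives $d(m)\in (M_{\leq j})^{j+1}\subset M_{\leq i}$, whence $d(M_{\leq i})\subset M_{\leq i}$; conversely $d(M^i)\subset (M_{\leq i})^{i+1}\subset M A^+$. Finally \ref{enum:dgper-dzero}$\Leftrightarrow$\ref{enum:dgper-length} follows from semisimplicity of $A^0$: for a complex of $A^0$-modules one always has $\lambda(H(\ol{\mathcal{M}}))\leq \lambda(\ol M)$, with equality if and only if the differential of $\ol{\mathcal{M}}$ is zero (finiteness of these lengths is recorded below).

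Next I would connect these to membership in $\dgFilt$, condition \ref{enum:dgper-dgfilt}. The implication \ref{enum:dgper-dgfilt}$\Rightarrow$\ref{enum:dgper-dzero} is Remark \ref{rem:tstruktur-dgfilt}, and \ref{enum:dgper-dgfilt}$\Rightarrow$\ref{enum:dgper-leq} is the observation that the $M_{\leq i}$ coarsen any $\hat L$-filtration. The substantial implication is \ref{enum:dgper-dzero}$\Rightarrow$\ref{enum:dgper-dgfilt}. Here I would first record that $\mathcal{M}\in\dgPer$ is isomorphic in $\dgHot$ to an object of $\dgFilt$ (Theorems \ref{t:filt-iso-prae} and \ref{t:t-cat-prae}), so $\lambda(H(\ol{\mathcal{M}}))$ is finite; under \ref{enum:dgper-dzero} this equals $\lambda(\ol M)$, so $\ol M$ has finite length. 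A graded Nakayama argument (applicable since the underlying module of a perfect object is bounded below) lifts a homogeneous basis of $\ol M$ to a surjection $\bigoplus_i\{l_i\}\hat L_{x_i}\sra M$ which is an isomorphism modulo $\mathcal{A}^+$; by Lemma \ref{l:isocrit} it is an isomorphism, so $M\in\gProjf(A)$ (Lemma \ref{l:eegproj}). Using \ref{enum:dgper-leq}, the chain $\cdots\subset M_{\leq i}\subset M_{\leq i+1}\subset\cdots$ is a filtration by dg submodules whose successive subquotients are generated in a single degree; refining each subquotient into individual $\{l\}\hat L_x$ produces an $\hat L$-filtration, the inequality $l_1\geq l_2\geq\cdots$ being automatic because the generation degree increases along the filtration (so the shift $-i$ decreases).

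To bring in homotopical minimality \ref{enum:dgper-minimal} I would prove \ref{enum:dgper-dgfilt}$\Rightarrow$\ref{enum:dgper-minimal} and, contrapositively, $\neg$\ref{enum:dgper-dzero}$\Rightarrow\neg$\ref{enum:dgper-minimal}. For the first, let $f\colon\mathcal{M}\ra\mathcal{M}$ be a homotopy equivalence with $\mathcal{M}\in\dgFilt$; applying $(?\otimes_{\mathcal{A}}\mathcal{A}^0)$ yields a homotopy equivalence $\ol f$ of $\ol{\mathcal{M}}$, and since the differential of $\ol{\mathcal{M}}$ is zero, $\ol f=H(\ol f)$ is an isomorphism, so $f$ is an isomorphism by Lemma \ref{l:isocrit} (here $M\in\gProjf(A)$ is flat). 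For the converse, if the differential of $\ol{\mathcal{M}}$ is nonzero then $\mathcal{M}$ splits off a nonzero contractible dg summand $\mathcal{C}$ as the complement of a minimal model (cf.\ \cite{AFH}); the idempotent endomorphism of $\mathcal{M}$ that is the identity on the minimal part and zero on $\mathcal{C}$ is homotopic to $\id_{\mathcal{M}}$, hence a homotopy equivalence, but it is not an isomorphism, so $\mathcal{M}$ is not homotopically minimal.

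For the last equivalence \ref{enum:dgper-length}$\Leftrightarrow$\ref{enum:dgper-lengthminimal} I would exploit that $\mathcal{M}$ is homotopically projective, so $\ol{\mathcal{M}}$ computes $\mathcal{M}\Lotimes_{\mathcal{A}}\mathcal{A}^0$ and $H(\ol{\mathcal{M}})$ is an invariant of the isomorphism class of $\mathcal{M}$ in $\dgHot$. For every representative $\mathcal{M}'$ one has $\lambda(\ol{M'})\geq \lambda(H(\ol{\mathcal{M}'}))=\lambda(H(\ol{\mathcal{M}}))$ over the semisimple ring $A^0$, and this bound is attained by any $\dgFilt$-representative; hence the minimum of $\lambda(\ol{M'})$ over the class is $\lambda(H(\ol{\mathcal{M}}))$, and \ref{enum:dgper-length} says exactly that $\mathcal{M}$ attains it, which is \ref{enum:dgper-lengthminimal}. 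The main obstacle I anticipate lies in the two non-formal steps: the construction of the $\hat L$-filtration in \ref{enum:dgper-dzero}$\Rightarrow$\ref{enum:dgper-dgfilt}, where one must secure finiteness and projectivity of $M$ and identify the subquotients of the $M_{\leq i}$-filtration as genuine shifted $\hat L_x$ in $\dgMod$, and the minimal-model decomposition underlying \ref{enum:dgper-minimal}$\Rightarrow$\ref{enum:dgper-dzero}; everything else is degree bookkeeping or an application of Lemma \ref{l:isocrit}.
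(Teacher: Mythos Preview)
Your formal equivalences \ref{enum:dgper-leq}$\Leftrightarrow$\ref{enum:dgper-dplus}$\Leftrightarrow$\ref{enum:dgper-dzero}$\Leftrightarrow$\ref{enum:dgper-length}, the implications \ref{enum:dgper-dgfilt}$\Rightarrow$\ref{enum:dgper-minimal}, \ref{enum:dgper-dgfilt}$\Rightarrow$\ref{enum:dgper-leq}, and your \ref{enum:dgper-length}$\Leftrightarrow$\ref{enum:dgper-lengthminimal} all agree with the paper. The genuine gap is your direct argument for \ref{enum:dgper-dzero}$\Rightarrow$\ref{enum:dgper-dgfilt}. You invoke Lemma~\ref{l:isocrit} on the map $\bigoplus_i \{l_i\}\hat L_{x_i}\to M$, but that lemma requires the \emph{target} to be $A$-flat, and at this stage nothing is known about the underlying graded module $M$ beyond $\mathcal{M}\in\dgPer$; flatness is precisely what you are trying to conclude. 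The parenthetical claim that ``the underlying module of a perfect object is bounded below'' is also false as stated: any nonzero contractible dg module is isomorphic to $0$ in $\dgHot$ and hence lies in $\dgPer$, yet need not be bounded below. So neither the Nakayama surjectivity step nor the isomorphism step is justified on the information available.

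The paper never attempts \ref{enum:dgper-dzero}$\Rightarrow$\ref{enum:dgper-dgfilt} directly. It closes the cycle via \ref{enum:dgper-minimal}$\Rightarrow$\ref{enum:dgper-dgfilt} and \ref{enum:dgper-lengthminimal}$\Rightarrow$\ref{enum:dgper-dgfilt}, in both cases by comparing $\mathcal{M}$ with a representative $\mathcal{F}\in\dgFilt$ of its homotopy class (Theorems~\ref{t:filt-iso-prae} and~\ref{t:t-cat-prae}). For \ref{enum:dgper-minimal}$\Rightarrow$\ref{enum:dgper-dgfilt} this is clean and needs no flatness: since $\mathcal{F}$ is homotopically minimal by the already-proven \ref{enum:dgper-dgfilt}$\Rightarrow$\ref{enum:dgper-minimal} and $\mathcal{M}$ is by hypothesis, the mutually inverse homotopy equivalences between them are honest isomorphisms in $\dgMod$. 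For \ref{enum:dgper-lengthminimal}$\Rightarrow$\ref{enum:dgper-dgfilt} one first deduces that the differential of $\ol{\mathcal{M}}$ vanishes, hence $\ol f$ is an isomorphism, and then applies Lemma~\ref{l:isocrit} with target $\mathcal{F}$, whose underlying module lies in $\gProjf(A)$ and is therefore flat. Either route gives $\mathcal{M}\cong\mathcal{F}$ in $\dgMod$, whence $\mathcal{M}\in\dgFilt$. Your minimal-model splitting for $\neg$\ref{enum:dgper-dzero}$\Rightarrow\neg$\ref{enum:dgper-minimal} is then unnecessary (and rests on an unpublished reference): once the loop is closed the paper's way, \ref{enum:dgper-minimal}$\Rightarrow$\ref{enum:dgper-dzero} comes for free.
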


\begin{proof}
  \begin{itemize}
  \item \ref{enum:dgper-dgfilt} $\Rightarrow$ \ref{enum:dgper-minimal}:
    Let ${\mathcal{M}}$ be in $\dgFilt$ and $f:{\mathcal{M}} \ra
    {\mathcal{M}}$ a morphism in $\dgMod(\mathcal{A})$ that 
    becomes an isomorphism in $\dgHot(\mathcal{A})$. Then 
    $\ol f: \ol{{\mathcal{M}}} \ra \ol{{\mathcal{M}}}$ in $\dgMod(\mathcal{A}^0)$ becomes an
    isomorphism in $\dgHot(\mathcal{A}^0)$. Since the differential of
    $\ol {{\mathcal{M}}}$ vanishes, $\ol f$ is already an
    isomorphism. Since $M$ is in $\gProjf(A)$ 
    (Lemma \ref{l:inclusions}) 
    and in particularly
    $A$-flat, Lemma \ref{l:isocrit} shows that $f$ is an isomorphism.
  \item \ref{enum:dgper-minimal} $\Rightarrow$ \ref{enum:dgper-dgfilt}:
    Let ${\mathcal{M}}$ be homotopically minimal.
    By Theorems \ref{t:filt-iso-prae}
    and \ref{t:t-cat-prae} there is an object ${\mathcal{F}}$ in $\dgFilt$
    isomorphic to ${\mathcal{M}}$ in $\dgHot$. We know already that
    ${\mathcal{F}}$ is homotopically minimal, hence ${\mathcal{M}}$ and ${\mathcal{F}}$
    are already isomorphic in $\dgMod$.
  \item \ref{enum:dgper-dgfilt} $\Rightarrow$ \ref{enum:dgper-leq}:
    Each $M_{\leq i}$ appears in any $\hat L$-filtration.
  \item \ref{enum:dgper-leq} $\Rightarrow$
    \ref{enum:dgper-dplus}:
    Let $m$ be in $M^i$. Then $d(m) \in M_{\leq i}$ by assumption.
    Having degree $i+1$ it must lie in $M_{\leq i}A^+$.
  \item \ref{enum:dgper-dplus} $\Rightarrow$
    \ref{enum:dgper-leq}:
    Since $d(M^j A^l)\subset d(M^j) A^l + M^j A^{l+1}$ by the Leibniz
    rule, it is
    sufficient to prove that $d(M^j) \subset M_{\leq j}$.  
    But $d(M^j) \subset M^{j+1}\cap MA^+=M^jA^1+M^{j-1}A^2+\dots
    \subset M_{\leq j}$.
  \item \ref{enum:dgper-dplus} $\Leftrightarrow$
    \ref{enum:dgper-dzero} $\Rightarrow$ \ref{enum:dgper-length}: Obvious.
  \item 
    \ref{enum:dgper-length}
    $\Rightarrow$ 
    \ref{enum:dgper-dzero}:
    It is obvious that $\lambda(H(\ol{\mathcal{N}}))$ is finite for any $\mathcal{N}$ in
    $\dgPer(\mathcal{A})$ since this is true for $\mathcal{A}$ itself.
    Let $Z \subset \ol{\mathcal{M}}$ be the kernel of the differential
      of $\ol{\mathcal{M}}$ and $B$ its image. Then
      $H(\ol{\mathcal{M}})=Z/B$ and
      \begin{equation*}
        \lambda(H(\ol{\mathcal{M}}) \leq \lambda(Z) \leq \lambda(\ol{M})
      \end{equation*}
      with equalities if and only if $B=0$ and $Z=\ol{\mathcal{M}}$.
  \item \ref{enum:dgper-length} $\Rightarrow$
    \ref{enum:dgper-lengthminimal}:
    As already remarked, $\lambda(H(\ol{\mathcal{N}}))$ is finite for any $\mathcal{N}$ in
    $\dgPer(\mathcal{A})$.
    Now let $\mathcal{N}$ be isomorphic to $\mathcal{M}$ in $\dgHot$. Then
    \begin{equation*}
      \length (\ol{{M}})=\length (H(\ol{{\mathcal{M}}})) =\length
      (H(\ol{{\mathcal{N}}})) \leq \length(\ol{{N}}).
    \end{equation*}
  \item \ref{enum:dgper-lengthminimal} $\Rightarrow$ \ref{enum:dgper-dgfilt}:
    By Theorems \ref{t:filt-iso-prae} and \ref{t:t-cat-prae} there is
    an object $\mathcal{F}=(F,d)$ in $\dgFilt$ and a morphism
    $f:\mathcal{M} \ra \mathcal{F}$ in $\dgMod(\mathcal{A})$ that
    becomes an isomorphism in $\dgHot(\mathcal{A})$.
    Then $\ol f: \ol{\mathcal{M}} \ra \ol{\mathcal{F}}$ in $\dgMod(\mathcal{A}^0)$ becomes
    an isomorphism in $\dgHot(\mathcal{A}^0)$. 
    We have already proved \ref{enum:dgper-dgfilt} $\Rightarrow$
    \ref{enum:dgper-length}, hence
    \begin{equation*}
      \length (\ol{{F}})=\length (H(\ol{\mathcal{F}})) =\length
      (H(\ol{\mathcal{M}})) \leq \length(\ol{{M}}).
    \end{equation*}
    Since $\lambda(\ol{M})$ is minimal in the isomorphism class of
    $\mathcal{M}$ we have equality. Hence the differential of
    $\ol{\mathcal{M}}$ vanishes thanks to \ref{enum:dgper-dzero} 
    $\Leftrightarrow$ \ref{enum:dgper-length}. 
    The same is true for $\ol{\mathcal{F}}$.
    Thus $\ol{f}$ is an isomorphism in $\dgMod(\mathcal{A}^0)$. Since
    the underlying graded module of $\mathcal{F}$ is in $\gProjf(A)$ 
    (Lemma \ref{l:inclusions}) and in
    particular $A$-flat, Lemma \ref{l:isocrit} shows that $f$ is an isomorphism.
  \end{itemize}
\end{proof}

\begin{corollary}
  \label{c:dgfilt-closed-summands}
  The category $\satzdgFilt$ is closed under forming direct summands in
  $\dgMod(\mathcal{A})$. 
\end{corollary}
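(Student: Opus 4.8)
The plan is to deduce this directly from the list of equivalent conditions in Proposition \ref{p:dgper-tfae}, whose whole point is to characterize the objects of $\dgFilt$ by an intrinsic property of a dg module in $\dgPer$ that manifestly passes to direct summands. So suppose $\mathcal{M}=(M,d)$ is in $\dgFilt$ and $\mathcal{M}=\mathcal{N}\oplus\mathcal{N}'$ is a direct sum decomposition in $\dgMod(\mathcal{A})$; I want to show $\mathcal{N}\in\dgFilt$ (and symmetrically $\mathcal{N}'\in\dgFilt$).

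First I would check that $\mathcal{N}$ lies in $\dgPer$, so that Proposition \ref{p:dgper-tfae} is applicable to it. The idempotent in $\dgMod(\mathcal{A})$ projecting $\mathcal{M}$ onto $\mathcal{N}$ remains an idempotent in $\dgHot(\mathcal{A})$, exhibiting $\mathcal{N}$ as a direct summand of $\mathcal{M}$ already in $\dgHot(\mathcal{A})$. Since $\mathcal{M}\in\dgFilt\subset\dgPer$ (Lemma \ref{l:inclusions}) and $\dgPer$ is by definition the smallest full triangulated subcategory of $\dgHot(\mathcal{A})$ containing $\mathcal{A}$ and closed under taking direct summands, this gives $\mathcal{N}\in\dgPer$.

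Now I would use the equivalence \ref{enum:dgper-dgfilt} $\Leftrightarrow$ \ref{enum:dgper-dzero} of Proposition \ref{p:dgper-tfae}. Because $\mathcal{M}\in\dgFilt$, condition \ref{enum:dgper-dzero} says that the differential of $\ol{\mathcal{M}}$ vanishes. The extension of scalars functor $?\otimes_{\mathcal{A}}\mathcal{A}^0$ is additive, so it turns the decomposition into $\ol{\mathcal{M}}=\ol{\mathcal{N}}\oplus\ol{\mathcal{N}'}$ in $\dgMod(\mathcal{A}^0)$ with diagonally acting differential; hence the differential of the summand $\ol{\mathcal{N}}$ is again zero. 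Applying the reverse implication \ref{enum:dgper-dzero} $\Rightarrow$ \ref{enum:dgper-dgfilt} to $\mathcal{N}\in\dgPer$ yields $\mathcal{N}\in\dgFilt$, and the identical argument applies to $\mathcal{N}'$.

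Since essentially all the real content has been placed in Proposition \ref{p:dgper-tfae}, I do not expect a genuine obstacle here; the only point deserving a moment's care is the distinction between a direct-summand decomposition in $\dgMod(\mathcal{A})$ and one in $\dgHot(\mathcal{A})$, which is harmless because the former induces the latter. As a variant that avoids passing to $\dgHot(\mathcal{A})$ for the differential computation, one could argue with condition \ref{enum:dgper-dplus} directly: a dg-module decomposition is in particular a decomposition $M=N\oplus N'$ of the underlying graded $A$-modules, under which $N\cap MA^+=NA^+$, so $d(N)\subset d(M)\cap N\subset MA^+\cap N=NA^+$, i.e. $\mathcal{N}$ satisfies \ref{enum:dgper-dplus}.
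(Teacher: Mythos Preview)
Your proof is correct and follows essentially the same approach as the paper: first observe that the summands lie in $\dgPer$, then apply one of the equivalent characterizations of Proposition~\ref{p:dgper-tfae} that visibly passes to direct summands. The only cosmetic difference is which condition is invoked: the paper uses \ref{enum:dgper-dgfilt} $\Leftrightarrow$ \ref{enum:dgper-leq}, whereas you use \ref{enum:dgper-dgfilt} $\Leftrightarrow$ \ref{enum:dgper-dzero} (with \ref{enum:dgper-dplus} as a variant); all three conditions are equally well-suited here.
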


\begin{proof}
  Let $F$ be in $\dgFilt$ and $F \cong X \oplus Y$ a decomposition
  in $\dgMod(\mathcal{A})$. Then $X$ and $Y$ are in
  $\dgPer(\mathcal{A})$, and in $\dgFilt$ by
  \ref{enum:dgper-dgfilt} $\Leftrightarrow$ \ref{enum:dgper-leq}.
\end{proof}

\section{Indecomposables}
\label{sec:indecomposables}

We prove that $\dgPer(\mathcal{A})$ is a Krull-Remak-Schmidt category,
i.\,e.\ it is an additive category and every object decomposes into a finite direct sum of objects
having local endomorphism rings.
This definition is taken from
\cite[App.~A]{Chen-Ye-Zhang-algebras-of-derived-dimension-zero-KS-category}
(cf.~\cite{auslander-I-II,Ringel-Tame-algebras-integral-forms,Krause-krull-schmidt-categories}); 
it is shown there that such a decomposition is essentially unique.
The dg algebra $\mathcal{A}$ is as before.

\begin{proposition}
  \label{p:bijection-isoclasses}
  The inclusion $\satzdgFilt \subset \satzdgPer$ induces a bijection
  between 
  \begin{enumerate}
  \item\label{enum:objdgfilt} objects of $\satzdgFilt$ up to isomorphism in $\dgMod$ and
  \item\label{enum:objdgper} objects of $\satzdgPer$ up to isomorphism in $\dgHot$.
  \end{enumerate}
\end{proposition}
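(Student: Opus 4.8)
The plan is to exhibit a bijection between isomorphism classes of objects in $\dgFilt$ (taken in $\dgMod$) and isomorphism classes of objects in $\dgPer$ (taken in $\dgHot$). The inclusion $\dgFilt \subset \dgPer$ composed with the quotient $\dgMod \to \dgHot$ gives a well-defined map from \ref{enum:objdgfilt} to \ref{enum:objdgper}: a $\dgMod$-isomorphism is in particular a $\dgHot$-isomorphism, so the map on isomorphism classes is well-defined.

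First I would prove surjectivity. Given any object $\mathcal{N}$ of $\dgPer$, Theorems \ref{t:filt-iso-prae} and \ref{t:t-cat-prae} (which give $\dgPrae = \dgPer$ and $\dgFilt \subset \dgPrae$ an equivalence) provide an object $\mathcal{F}$ of $\dgFilt$ that is isomorphic to $\mathcal{N}$ in $\dgHot$. Hence the isomorphism class of $\mathcal{N}$ in $\dgHot$ lies in the image, and the map is surjective.

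The heart of the matter is injectivity. Suppose $\mathcal{F}$ and $\mathcal{G}$ are objects of $\dgFilt$ that become isomorphic in $\dgHot(\mathcal{A})$; I must show they are already isomorphic in $\dgMod(\mathcal{A})$. The key input is Proposition \ref{p:dgper-tfae}: every object of $\dgFilt$ is homotopically minimal (equivalence of \ref{enum:dgper-dgfilt} and \ref{enum:dgper-minimal}). Concretely, let $g: \mathcal{F} \ra \mathcal{G}$ be a morphism in $\dgMod(\mathcal{A})$ representing the $\dgHot$-isomorphism, with homotopy inverse $h: \mathcal{G} \ra \mathcal{F}$. Then $h \comp g: \mathcal{F} \ra \mathcal{F}$ is a homotopy equivalence, and since $\mathcal{F}$ is homotopically minimal it is an honest isomorphism in $\dgMod$; likewise $g \comp h$ is an isomorphism. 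From the fact that $h \comp g$ and $g \comp h$ are both invertible in $\dgMod(\mathcal{A})$ it follows by a standard argument that $g$ itself is an isomorphism in $\dgMod(\mathcal{A})$. Thus $\mathcal{F} \cong \mathcal{G}$ in $\dgMod$, and the two isomorphism classes coincide, proving injectivity.

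The main obstacle is the injectivity step, and it rests entirely on having homotopical minimality available for objects of $\dgFilt$. Once Proposition \ref{p:dgper-tfae} is in hand, the passage from "both composites are isomorphisms" to "$g$ is an isomorphism" is the only remaining subtlety: one checks that $g$ has a two-sided inverse by combining the inverses of $h \comp g$ and $g \comp h$ (for instance, $(h \comp g)\inv \comp h$ is a left inverse and $h \comp (g \comp h)\inv$ a right inverse, whence $g$ is invertible). I expect no further difficulty, since well-definedness and surjectivity are immediate from the earlier theorems.
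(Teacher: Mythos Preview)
Your proposal is correct and follows the same approach as the paper: surjectivity via the equivalence $\dgFilt \subset \dgPer$ from Theorems~\ref{t:filt-iso-prae} and~\ref{t:t-cat-prae}, and injectivity via homotopical minimality from Proposition~\ref{p:dgper-tfae}. The paper states the injectivity step in one sentence, while you have helpfully unpacked the passage from ``both composites are isomorphisms'' to ``$g$ is an isomorphism''; this is exactly the intended argument.
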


\begin{proof}
  The map from the set in \ref{enum:objdgfilt} to the set in
  \ref{enum:objdgper} is surjective since $\dgFilt \subset \dgPer$ is
  an equivalence (see Theorems \ref{t:filt-iso-prae} and
  \ref{t:t-cat-prae}).
  If $M$ and $N$ are in $\dgFilt$ and isomorphic in $\dgHot$ then they
  are already isomorphic in $\dgMod$, since $M$ and $N$ are homotopically minimal
  (Proposition \ref{p:dgper-tfae}). This proves injectivity.
\end{proof}

\begin{corollary}
  \label{c:indecomposability-for-dgfilt}
  An object $M$ of $\satzdgFilt$ is indecomposable in $\dgMod$ 
  if and only if it is indecomposable in $\dgHot$ (in $\satzdgFilt$, in
  $\satzdgPer$, respectively).
\end{corollary}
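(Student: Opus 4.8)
The plan is to reduce the fourfold equivalence to the single nontrivial statement that, for $M$ in $\dgFilt$, indecomposability in $\dgMod(\mathcal{A})$ is equivalent to indecomposability in $\dgHot(\mathcal{A})$. First I would observe that $\dgFilt$ is a full subcategory of $\dgMod(\mathcal{A})$ closed under direct summands (Corollary \ref{c:dgfilt-closed-summands}), so for $M$ in $\dgFilt$ a nontrivial decomposition $M \cong X \oplus Y$ exists in $\dgMod(\mathcal{A})$ if and only if it exists in $\dgFilt$; likewise $\dgPer$ is closed under direct summands in $\dgHot(\mathcal{A})$ by its very definition, so a decomposition of $M$ in $\dgHot(\mathcal{A})$ is the same as one in $\dgPer$. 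This identifies indecomposability in $\dgMod$ with indecomposability in $\dgFilt$, and indecomposability in $\dgHot$ with indecomposability in $\dgPer$, leaving only the comparison between $\dgMod$ and $\dgHot$.

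Next I would record the rigidity I need: for $X$ in $\dgFilt$ one has $X \cong 0$ in $\dgMod(\mathcal{A})$ if and only if $X \cong 0$ in $\dgHot(\mathcal{A})$. This is immediate from Proposition \ref{p:bijection-isoclasses}, since the bijection there is induced by inclusion and sends the $\dgMod$-isomorphism class of $0$ to the $\dgHot$-isomorphism class of $0$, so injectivity forces any $X$ in $\dgFilt$ that is contractible to already vanish in $\dgMod$ (the converse is trivial). With this the backward implication is easy: if $M \cong X \oplus Y$ nontrivially in $\dgMod(\mathcal{A})$, then $X$, $Y$ lie in $\dgFilt$ by Corollary \ref{c:dgfilt-closed-summands}, the very same identity is a decomposition in $\dgHot(\mathcal{A})$, and neither summand becomes zero in $\dgHot$ by rigidity, so $M$ decomposes in $\dgHot(\mathcal{A})$.

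For the forward implication, suppose $M \cong X \oplus Y$ nontrivially in $\dgHot(\mathcal{A})$. Each summand is a direct summand of $M \in \dgPer$, hence lies in $\dgPer$, and since $\dgFilt \subset \dgPer$ is an equivalence (Theorems \ref{t:filt-iso-prae} and \ref{t:t-cat-prae}) I may replace $X$, $Y$ by homotopy-isomorphic objects $X'$, $Y'$ in $\dgFilt$, giving $M \cong X' \oplus Y'$ in $\dgHot(\mathcal{A})$. The key technical point is that $X' \oplus Y'$ again lies in $\dgFilt$. Rather than merging the two $\hat L$-filtrations, where the decreasing-shift condition $l_1 \geq \dots \geq l_n$ is awkward to preserve, I would invoke characterization \ref{enum:dgper-dplus} of Proposition \ref{p:dgper-tfae}: both $X'$ and $Y'$ satisfy $d \subset (\,\cdot\,)A^+$, and since $X' \oplus Y' \in \dgPer$ with $d(X' \oplus Y') = d(X') \oplus d(Y') \subset X'A^+ \oplus Y'A^+ = (X' \oplus Y')A^+$, the implication \ref{enum:dgper-dplus} $\Rightarrow$ \ref{enum:dgper-dgfilt} yields $X' \oplus Y' \in \dgFilt$. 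Now $M$ and $X' \oplus Y'$ both lie in $\dgFilt$ and are isomorphic in $\dgHot(\mathcal{A})$, so by homotopical minimality they are isomorphic in $\dgMod(\mathcal{A})$ (Proposition \ref{p:dgper-tfae}, or directly the injectivity in Proposition \ref{p:bijection-isoclasses}); rigidity guarantees $X'$, $Y' \neq 0$ in $\dgMod(\mathcal{A})$, so $M$ decomposes in $\dgMod(\mathcal{A})$.

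The only genuine obstacle is the closure of $\dgFilt$ under finite direct sums used in the forward direction, and I expect the clean route to be exactly through condition \ref{enum:dgper-dplus} as above, avoiding any reordering of filtration steps. Everything else is a bookkeeping assembly of the established equivalence $\dgFilt \simeq \dgPer$, the two closure-under-summands statements, and homotopical minimality.
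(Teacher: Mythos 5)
Your proposal is correct and takes essentially the same route as the paper's proof: both directions run on Proposition~\ref{p:bijection-isoclasses}, Corollary~\ref{c:dgfilt-closed-summands}, and the equivalence $\satzdgFilt \subset \satzdgPer$, replacing homotopy summands by isomorphic objects of $\satzdgFilt$ and then invoking homotopical minimality to descend isomorphisms from $\dgHot$ to $\dgMod$. Your explicit check that $X' \oplus Y'$ again lies in $\satzdgFilt$ via condition~\ref{enum:dgper-dplus} of Proposition~\ref{p:dgper-tfae} neatly fills in the closure-under-finite-direct-sums step that the paper's proof uses tacitly.
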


\begin{proof}
  Assume that $M$ is indecomposable in $\dgMod$ and let $M \cong
  A\oplus B$ be a direct sum decomposition in $\dgHot$ (in $\dgFilt$,
  in $\dgPer$).
  Then $A$, $B \in \dgPer$, and Proposition 
  \ref{p:bijection-isoclasses} shows that we can assume that $A$,
  $B$ are in $\dgFilt$; but then $M$ and $A\oplus B$ are objects of
  $\dgFilt$ that become isomorphic in $\dgHot$. Hence they are already
  isomorphic in $\dgMod$. This implies that $A=0$ or $B=0$, so $M$ is indecomposable
  in $\dgHot$ (in $\dgFilt$,
  in $\dgPer$).

  Now assume that $M$ is indecomposable in $\dgHot$ (in $\dgFilt$, in $\dgPer$). 
  Let $M\cong A\oplus B$ be a
  direct sum decomposition in $\dgMod$. 
  Corollary \ref{c:dgfilt-closed-summands} shows that $A$ and $B$
  are in $\dgFilt$. Since $M$ is indecomposable in $\dgHot$ (in
  $\dgFilt$, $\dgPer$) we must
  have $A\cong 0$ or $B \cong 0$ in $\dgHot$, and by Proposition
  \ref{p:bijection-isoclasses} we obtain $A=0$ or $B=0$ in
  $\dgMod$. So $M$ is indecomposable in $\dgMod$.
\end{proof}

We need the following easy lemma in order to obtain a Fitting
decomposition for objects of $\dgFilt$.
The idea is to get a decomposition of the middle term of a short exact
sequence from decompositions of the border terms.
\begin{lemma}
  \label{l:fitting-ses}
  Let $X: 0 \ra A \ra B \ra C \ra 0$ be an exact sequence in an abelian
  category and $f=(a,b,c)$ an endomorphism of $X$.
  Assume that
  $\Bild a  = \Bild a^2$ and
  $\Kern c  = \Kern c^2$.
  Then the sequences 
  \begin{align}
    \label{eq:kernfolge}
    0 \ra \Kern a^2 \ra & \Kern b^2 \ra \Kern c^2 \ra 0 
    \\
    \label{eq:bildfolge}
    0 \ra \Bild a^2 \ra & \Bild b^2 \ra \Bild c^2 \ra 0 
  \end{align}
  are exact.
  If in addition
  \begin{align*}
    \Kern a  = \Kern a^2, & \quad A = \Kern a \oplus \Bild a,\\
    \Bild c  = \Bild c^2, & \quad C = \Kern c \oplus \Bild c,
  \end{align*}
  then
  \begin{equation*}
    \Kern b^2  = \Kern b^3, \quad \Bild b^2  = \Bild b^3, \quad B = \Kern b^2 \oplus \Bild b^2.
  \end{equation*}
\end{lemma}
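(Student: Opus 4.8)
The plan is to run everything by diagram chasing in the ambient abelian category, which is legitimate by the usual metatheorem (Freyd--Mitchell, or Mac Lane's calculus of members). Write $\iota\colon A \ra B$ and $\pi\colon B \ra C$ for the structure maps of $X$, so that $b\iota=\iota a$ and $\pi b = c\pi$. Since $f$ is an endomorphism of $X$, so is each power $f^n=(a^n,b^n,c^n)$. I would first record the elementary observation that $\Bild a=\Bild a^2$ forces $\Bild a^n=\Bild a$ for all $n\geq 1$ (induction: $\Bild a^{n+1}=a\Bild a^n=a\Bild a=\Bild a^2=\Bild a$), and dually $\Kern c=\Kern c^2$ forces $\Kern c^n=\Kern c$; under the extra hypotheses of the second part one obtains likewise $\Kern a^n=\Kern a$ and $\Bild c^n=\Bild c$ for all $n$.

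For the first part I would treat \eqref{eq:kernfolge} and \eqref{eq:bildfolge} as the kernel- and image-rows of the endomorphism $f^2$ of $X$. In the kernel row left-exactness is automatic, so the only content is surjectivity of $\Kern b^2 \ra \Kern c^2$; in the image row injectivity on the left and surjectivity on the right are automatic, so the only content is exactness in the middle. Both reduce to a single chase. For the kernel row: given $\gamma\in\Kern c^2=\Kern c$, lift it to $\beta\in B$ with $\pi\beta=\gamma$; from $c\gamma=0$ we get $\pi(b\beta)=0$, hence $b\beta=\iota\alpha$ for some $\alpha\in A$ and $b^2\beta=\iota(a\alpha)$. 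Since $a\alpha\in\Bild a=\Bild a^2$ we may write $a\alpha=a^2\alpha'$ and correct $\beta$ to $\tilde\beta=\beta-\iota\alpha'$, which still lifts $\gamma$ and satisfies $b^2\tilde\beta=\iota(a\alpha-a^2\alpha')=0$, so $\tilde\beta\in\Kern b^2$. For the image row one checks $\Bild b^2\cap\Bild\iota=\iota(\Bild a^2)$, where $\supseteq$ is the identity $\iota\Bild a^2=b^2\Bild\iota$ and $\subseteq$ runs the same chase on an element $x=b^2\beta=\iota\alpha$. The hypotheses $\Kern c=\Kern c^2$ and $\Bild a=\Bild a^2$ are precisely what both chases consume.

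For the second part I would prove the two stabilization equalities and then invoke the usual Fitting argument. The equality $\Kern b^2=\Kern b^3$ is a short chase: if $b^3\beta=0$ then $\pi\beta\in\Kern c^3=\Kern c$, so $b\beta=\iota\alpha$ with $a^2\alpha=0$ by injectivity of $\iota$, hence $\alpha\in\Kern a^2=\Kern a$ and $b^2\beta=\iota(a\alpha)=0$. For $\Bild b^2=\Bild b^3$ I would use \eqref{eq:bildfolge}, which now reads $0\ra\Bild a\ra\Bild b^2\ra\Bild c\ra 0$: the endomorphism $b$ carries this sequence into itself, acting on the two outer terms as the restrictions $a|_{\Bild a}$ and $c|_{\Bild c}$, and these are \emph{automorphisms} --- surjective because $\Bild a=\Bild a^2$, injective because $A=\Kern a\oplus\Bild a$ forces $\Kern a\cap\Bild a=0$ (and similarly for $c$). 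By the five lemma $b|_{\Bild b^2}$ is then an automorphism of $\Bild b^2$, whence $\Bild b^3=b(\Bild b^2)=\Bild b^2$; this is the step that genuinely uses the direct-sum hypotheses. Finally, from $\Kern b^2=\Kern b^3$ one gets $\Kern b^m=\Kern b^2$ and from $\Bild b^2=\Bild b^3$ one gets $\Bild b^m=\Bild b^2$ for all $m\geq 2$ by the same induction as above; then $\Kern b^2\cap\Bild b^2=0$ (an $x=b^2y$ killed by $b^2$ has $y\in\Kern b^4=\Kern b^2$, so $x=0$) together with $B=\Kern b^2+\Bild b^2$ (for $x\in B$ write $b^2x=b^4z$ using $\Bild b^2=\Bild b^4$, whence $x-b^2z\in\Kern b^2$) give $B=\Kern b^2\oplus\Bild b^2$.

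The only real obstacle is the first part: once the two short exact rows \eqref{eq:kernfolge} and \eqref{eq:bildfolge} are in place, the second part is pure bookkeeping (stabilization of kernels and images, then the Fitting splitting). The delicate point is the single surjectivity/middle-exactness chase above, i.e.\ seeing that the relevant obstruction dies precisely because of $\Bild a=\Bild a^2$ and $\Kern c=\Kern c^2$; everything else is formal. A secondary, purely expository point is to make explicit that all these chases are valid in an arbitrary abelian category.
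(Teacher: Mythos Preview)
Your argument is correct. It reaches the same conclusions as the paper but by a genuinely different route.

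For the first part, the paper treats $X$ as a three-term complex and extracts the exactness of the two rows from the long exact cohomology sequence of $0\to\Kern f^2\to X\to\Bild f^2\to 0$; the one remaining obstruction (equivalently $H^2(\Kern f^2)\cong H^1(\Bild f^2)$) is killed by comparing with the analogous sequence for $f$ via a morphism $\sigma:\Sigma_2\to\Sigma_1$ and reading off a commutative square of connecting maps. You instead do a direct element chase (legitimized by Freyd--Mitchell), and your single lift-and-correct step is exactly where the hypotheses $\Bild a=\Bild a^2$ and $\Kern c=\Kern c^2$ enter. Both arguments isolate the same obstruction; the paper's is more conceptual, yours more hands-on.

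For the second part the order is reversed. The paper first obtains $B=\Kern b^2\oplus\Bild b^2$ by applying the five lemma to the obvious map from (\ref{eq:kernfolge})$\oplus$(\ref{eq:bildfolge}) into $X$, and only afterwards deduces stabilization by re-running the first part for $f^2$ (getting $\Kern b^2=\Kern b^4$, hence $\Kern b^2=\Kern b^3$, and dually). You instead prove stabilization first (kernels by a short chase, images by the five lemma applied to $b$ acting on the image row, using that $a|_{\Bild a}$ and $c|_{\Bild c}$ are automorphisms) and then recover the splitting by the classical Fitting argument $\Kern b^2\cap\Bild b^2=0$ and $B=\Kern b^2+\Bild b^2$. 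Your approach is a bit more elementary and makes the role of the direct-sum hypotheses on $A$ and $C$ very transparent; the paper's approach is more uniform in that it never leaves the language of exact sequences and the five lemma.
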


\begin{proof}
  We view $X$ as a complex concentrated in degrees $0, 1, 2$, with $A$
  sitting in degree $0$.
  Consider the short exact sequences $\Sigma_1: 0\ra \Kern f \ra X \ra \Bild f \ra 0$ and 
  $\Sigma_2: 0 \ra \Kern f^2 \ra X \ra \Bild f^2 \ra 0$ of
  complexes and note that there is a morphism  $\sigma: (f, f, i): \Sigma_2 \ra
  \Sigma_1$, where $i$ is the inclusion $\Bild f^2 \ra \Bild f$. 

  The long exact cohomology sequence obtained from $\Sigma_2$ shows
  the exactness of \eqref{eq:kernfolge} and \eqref{eq:bildfolge} at all positions
  except at $\Kern c^2$ and $\Bild b^2$.
  The non-trivial part of the morphism between the long exact
  sequences induced by $\sigma$ is the commutative square
  \begin{equation*}
    \xymatrix{
      {H^1(\Bild f^2)} \ar[r]^\sim \ar[d]^{H^1(i)} & {H^2(\Kern f^2)} \ar[d]^{H^2(f)}\\
      {H^1(\Bild f)} \ar[r]^\sim & {H^2(\Kern f).} 
    }
  \end{equation*}
  Our assumption $\Kern c = \Kern c^2$ shows that the morphism
  $H^2(f)$ is zero. From $\Bild a=\Bild a^2$ we see
  that $H^1(i)$ is a monomorphism. 
  This implies that the upper corners in our diagram are zero. 
  Hence \eqref{eq:kernfolge} and \eqref{eq:bildfolge} are also exact
  at $\Kern c^2$ and $\Bild b^2$.

  Assume that the additional assumptions hold. Then 
  the five-lemma shows that the obvious morphism from the direct sum
  of the two exact sequences \eqref{eq:kernfolge} and
  \eqref{eq:bildfolge} to the exact sequence $X$ is an
  isomorphism. Thus $B =\Kern b^2 \oplus \Bild b^2$. 

  Since $\Bild a^2=\Bild a^4$ and $\Kern c^2 = \Kern c^4$ we obtain
  from the first part an exact sequence of the form
  \eqref{eq:kernfolge} with squares replaced by fourth powers, and a
  morphism from 
  \eqref{eq:kernfolge} to this sequence. The five lemma again shows
  that $\Kern b^2=\Kern b^4$, hence $\Kern b^2=\Kern b^3$.
  Similarly one proves $\Bild b^2 = \Bild b^3$.
\end{proof}

\begin{proposition}[Fitting decomposition]
  \label{p:fitting-dgFilt}
  Let $M$ be in $\satzdgFilt(\mathcal{A})$ and $f: M \ra M$ an endomorphism in
  $\dgMod$.
  Then  $\Kern f^n=\Kern f^{n+1}$, $\Bild f^n= \Bild f^{n+1}$ and
  $M = \Kern f^n \oplus \Bild f^n$ for $n \gg 0$.
\end{proposition}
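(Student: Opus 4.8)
The plan is to induct on the number $|\supp\ol M|$ of degrees in which $M$ is generated as a graded $A$-module, using the canonical generation-degree filtration $(M_{\leq i})_i$ together with Lemma~\ref{l:fitting-ses}. The feature that makes this filtration the right tool is that \emph{every} endomorphism respects it: since $f$ is in particular a degree-zero morphism of graded $A$-modules, we have $f(M^j)\subseteq M^j$, whence $f(M_{\leq i})=\sum_{j\leq i}f(M^j)A\subseteq M_{\leq i}$. So, unlike an arbitrary $\hat L$-filtration, the filtration $(M_{\leq i})_i$ is automatically $f$-stable; by Proposition~\ref{p:dgper-tfae} each $M_{\leq i}$ is a dg submodule, and both $M_{\leq i}$ and $M/M_{\leq i}$ lie again in $\dgFilt$.

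For the base case I would treat the situation where $M$ is generated in a single degree (with $M=0$ trivial). After a shift $M$ is then an object of $\dgFilMod$, which by Propositions~\ref{p:dgfilmod-abelian-sub} and \ref{p:dgfilmod-equi} is a full abelian subcategory of $\dgMod$, with exact inclusion, in which every object has finite length. Thus $f$ is an endomorphism of a finite-length object of an abelian category, and the classical Fitting lemma provides the desired decomposition; since the inclusion $\dgFilMod\subset\dgMod$ is exact, the kernels and images formed there coincide with those formed in $\dgMod$, which is what the statement asks for.

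For the inductive step let $i=\min\supp\ol M$, so that $0\neq M_{\leq i}\neq M$. The sequence $0\to M_{\leq i}\to M\to M/M_{\leq i}\to 0$ is $A$-split and $f$-stable, its two ends lie in $\dgFilt$ and have strictly fewer generation degrees, so by induction the endomorphisms induced by $f$ on $M_{\leq i}$ and on $M/M_{\leq i}$ admit Fitting decompositions. To apply Lemma~\ref{l:fitting-ses}, whose hypotheses are phrased for first and second powers, I would first pass to a power: $f$ has the asserted property if and only if $f^N$ does for some (equivalently every) $N\geq 1$, because the chains $\Kern f^n$ and $\Bild f^n$ are monotone. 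Choosing $N$ so large that the endomorphisms $a$ and $c$ induced by $f^N$ on $M_{\leq i}$ and on $M/M_{\leq i}$ already satisfy $\Kern a=\Kern a^2$, $\Bild a=\Bild a^2$, $M_{\leq i}=\Kern a\oplus\Bild a$, and likewise for $c$, the triple $(a,b,c)$ with $b=f^N$ meets all hypotheses of Lemma~\ref{l:fitting-ses}. The lemma then yields $\Kern b^2=\Kern b^3$, $\Bild b^2=\Bild b^3$ and $M=\Kern b^2\oplus\Bild b^2$, i.e.\ a Fitting decomposition for $b=f^N$, and hence for $f$.

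I expect the main obstacle to be conceptual rather than computational: $M$ has infinite length over $\mathcal{A}$, so the classical Fitting lemma is not directly available, and a chosen $\hat L$-filtration need not be preserved by $f$. The two observations that unlock the argument are that the coarser filtration $(M_{\leq i})_i$ is $f$-stable for free and that the single-generation-degree pieces live in the finite-length abelian heart $\dgFilMod$; the only genuinely technical point is the bookkeeping of replacing $f$ by $f^N$ so as to match the literal second-power hypotheses of Lemma~\ref{l:fitting-ses}.
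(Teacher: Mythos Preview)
Your proposal is correct and follows essentially the same approach as the paper: induction along the generation-degree filtration $(M_{\leq i})_i$, with the single-degree case handled by the classical Fitting lemma in the finite-length heart, and the inductive step by Lemma~\ref{l:fitting-ses}. You are in fact more explicit than the paper on two points it leaves implicit, namely that $f$ automatically preserves each $M_{\leq i}$ and that one must pass to a suitable power $f^N$ before the literal first/second-power hypotheses of Lemma~\ref{l:fitting-ses} apply.
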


\begin{proof}
  Each object $M$ of $\dgFilt$ lies in $\dgFilt^I$ for some bounded interval
  $I=[a,b]$ where $a \leq b$ are integers. We prove the statement by
  induction over $|I|:=b-a$.
  For $|I|=0$ we can use the classical Fitting lemma since the heart
  of our t-structure is artinian.

  Assume that $|I|>0$. Choose $i \in I$ such that 
  $0 \subsetneq M_{\leq i} \subsetneq M$. Then we have a short exact
  sequence 
  \begin{equation*}
    0 \ra M_{\leq i} \ra M \ra M/M_{\leq i} \ra 0
  \end{equation*}
  in $\dgMod$,
  and $f$ induces an endomorphism
  of this sequence. 
  By induction we have Fitting decompositions for $M_{\leq i}$ and
  $M/M_{\leq i}$, and Lemma \ref{l:fitting-ses} shows that these
  decompositions yield a Fitting decomposition of $M$.
\end{proof}

\begin{proposition}
  \label{p:krs-category}
  \begin{enumerate}
  \item 
    \label{enum:krs-endos} 
    Let $M$ be an indecomposable object of $\satzdgFilt$ (cf. Corollary
    \ref{c:indecomposability-for-dgfilt}). Then any
    element of $\End_{\dgMod}(M)$ is either an isomorphism or
    nilpotent, and the nilpotent endomorphisms form an ideal in
    $\End_{\dgMod}(M)$.
    The same statement holds for $\End_{\dgHot}(N)$ if $N$ is an
    indecomposable object of $\satzdgPer$.
    In particular both endomorphism rings are local rings.
  \item 
    \label{enum:krs-decomp} 
    Each object of $\satzdgFilt$ decomposes into a finite direct sum of
    indecomposables of $\satzdgFilt$. Same for $\satzdgPer$. 
  \item 
    \label{enum:krs-krs} 
    $\satzdgFilt$ and $\satzdgPer$ are Krull-Remak-Schmidt categories.
  \end{enumerate}
\end{proposition}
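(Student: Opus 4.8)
The plan is to read off all three parts from the Fitting decomposition of Proposition~\ref{p:fitting-dgFilt} together with the equivalence $\dgFilt\simeq\dgPer$ provided by Theorems~\ref{t:filt-iso-prae} and \ref{t:t-cat-prae}. I would first settle the endomorphism-ring statement \ref{enum:krs-endos} in the module category. Let $M$ be indecomposable in $\dgFilt$ and $f\in\End_{\dgMod}(M)$. By Proposition~\ref{p:fitting-dgFilt} there is an $n$ with $M=\Kern f^n\oplus\Bild f^n$, and indecomposability forces one summand to vanish. If $\Bild f^n=0$ then $f^n=0$, so $f$ is nilpotent. If instead $\Kern f^n=0$, then $f^n$ is a monomorphism and, since $M=\Bild f^n$, also an epimorphism, hence an isomorphism in the abelian category $\dgMod$; then $f^{n-1}(f^n)^{-1}$ is a two-sided inverse of $f$, so $f$ is an isomorphism. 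Thus every endomorphism of $M$ is an isomorphism or nilpotent.

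Next I would check that the nilpotent (equivalently, non-invertible) endomorphisms form a two-sided ideal, which makes $\End_{\dgMod}(M)$ local. For closure under multiplication: if $f$ is nilpotent and $gf$ were a unit, then $f$ would be a monomorphism; but a nilpotent monomorphism vanishes, so $gf=0$, which is not a unit since $M\neq 0$---a contradiction. The argument for $fg$ uses epimorphisms instead. For closure under addition: if $f,g$ are nilpotent but $s=f+g$ is a unit, then $fs^{-1}$ and $gs^{-1}$ are non-units by the multiplicative closure just shown, yet $gs^{-1}=\id-fs^{-1}$ is invertible with inverse $\sum_i (fs^{-1})^i$ (a finite sum, as $fs^{-1}$ is nilpotent)---again a contradiction. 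Hence the non-units form a proper two-sided ideal and $\End_{\dgMod}(M)$ is local.

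For the homotopy version of \ref{enum:krs-endos}, let $N$ be indecomposable in $\dgPer$ and choose, via Theorems~\ref{t:filt-iso-prae} and \ref{t:t-cat-prae}, an object $M\in\dgFilt$ with $M\cong N$ in $\dgHot$; then $M$ is indecomposable in $\dgFilt$ by Corollary~\ref{c:indecomposability-for-dgfilt}, and $\End_{\dgHot}(N)\cong\End_{\dgHot}(M)$. Since morphisms in $\dgHot$ are homotopy classes of module morphisms, the canonical ring homomorphism $\End_{\dgMod}(M)\to\End_{\dgHot}(M)$ is surjective, exhibiting $\End_{\dgHot}(M)$ as a nonzero quotient of the local ring $\End_{\dgMod}(M)$, hence local; the isomorphism/nilpotent dichotomy and the ideal of nilpotents transport along this surjection. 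This proves \ref{enum:krs-endos}.

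Finally, for \ref{enum:krs-decomp} I would induct on $\length(\ol M)$: if $M\in\dgFilt$ is decomposable, write $M\cong X\oplus Y$ nontrivially in $\dgMod$; both $X,Y$ lie in $\dgFilt$ by Corollary~\ref{c:dgfilt-closed-summands}, and since $\length(\ol{(-)})$ is additive and strictly positive on nonzero objects, both have strictly smaller length, so the induction yields finite decompositions into indecomposables. The corresponding statement for $\dgPer$ follows by transporting along $\dgFilt\simeq\dgPer$, using Corollary~\ref{c:indecomposability-for-dgfilt} to keep the summands indecomposable. Part \ref{enum:krs-krs} is then immediate: both $\dgFilt$ and $\dgPer$ are additive, and \ref{enum:krs-endos} together with \ref{enum:krs-decomp} provides decompositions into indecomposables with local endomorphism rings, which is exactly the definition of a Krull--Remak--Schmidt category. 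I expect the main subtlety to be not the abelian Fitting argument (which Proposition~\ref{p:fitting-dgFilt} already supplies) but the faithful transfer of the local property from $\End_{\dgMod}$ to $\End_{\dgHot}$ through the homotopy quotient.
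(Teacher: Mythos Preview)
Your proof is correct and follows essentially the same route as the paper: the Fitting decomposition of Proposition~\ref{p:fitting-dgFilt} for the $\dgMod$-statement in \ref{enum:krs-endos}, the length count $\lambda(\ol{\,\cdot\,})$ together with Corollary~\ref{c:dgfilt-closed-summands} for \ref{enum:krs-decomp}, and the combination of both for \ref{enum:krs-krs}. You spell out in full the standard Fitting/local-ring argument and make explicit the surjection $\End_{\dgMod}(M)\twoheadrightarrow\End_{\dgHot}(M)$ to transfer locality, whereas the paper only gestures at this via Proposition~\ref{p:bijection-isoclasses} and Corollary~\ref{c:indecomposability-for-dgfilt}; your version is a faithful and slightly more detailed unpacking of the same proof.
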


\begin{proof}
  The first statement of \ref{enum:krs-endos} is a standard consequence of the Fitting
  decomposition. The second statement then follows from Proposition
  \ref{p:bijection-isoclasses}
  and Corollary \ref{c:indecomposability-for-dgfilt}.

  Let us prove \ref{enum:krs-decomp}.
  If $X$ is an object of $\dgFilt$ with a direct sum decomposition $X
  \cong Y\oplus Z$ in $\dgMod$, then $Y$ and $Z$
  are in $\dgFilt$ by Corollary \ref{c:dgfilt-closed-summands}, and
  $\lambda(\ol X)= \lambda(\ol Y) + \lambda(\ol Z)$.
  Moreover $\lambda(\ol X)$ is finite, and $\lambda(\ol X)=0$ if and
  only if $X=0$.

  This shows that any object $X$ of $\dgFilt$ has a finite direct sum decomposition
  $X \cong M_1 \oplus \dots \oplus M_n$ in $\dgMod$ where all $M_i$
  are in $\dgFilt$ and indecomposable in $\dgMod$.
  All $M_i$ are also indecomposable in $\dgFilt$ (Corollary
  \ref{c:indecomposability-for-dgfilt}).

  The analog statement for $\dgPer$ is true thanks to the equivalence
  $\dgFilt \subset \dgPer$ (Theorems \ref{t:filt-iso-prae} and
  \ref{t:t-cat-prae}).

  Part \ref{enum:krs-krs} is a consequence of 
  \ref{enum:krs-endos} and \ref{enum:krs-decomp}.
\end{proof}

\section{Koszul-Duality}
\label{sec:koszul-duality}
We study the case that the dg algebra $\mathcal{A}$ is a Koszul ring
with differential zero.
Under some finiteness conditions we show that the heart of the 
t-structure on $\dgPer(\mathcal{A})$ is governed by the dual Koszul
ring.
This is a shadow of the usual Koszul equivalence (see
Remark~\ref{rem:koszul-shadow} below).
We assume that $k=\DZ$ in this chapter.

Let $A$ be a Koszul ring (see \cite{BGS}). It gives rise to a dg
algebra $\mathcal{A}=(A, d=0)$ with differential zero.
This dg algebra satisfies the conditions \ref{enum:pg}-\ref{enum:sdga}.
Since $A$ is Koszul there is a
resolution $P \sra A^0$,
\begin{equation*}
  \dots \ra P_{-2} \xra{d_{-2}} P_{-1} \xra{d_{-1}} P_0 \sra A^0,
\end{equation*}
of the graded (right) $A$-module $A^0=A/A^+$ such that each $P_{-i}$ is a
projective object in $\gMod(A)$ and 
generated in
degree $i$, $P_{-i}=P_{-i}^i A$.
Such a resolution is unique up to unique isomorphism.
Note that multiplication defines an
isomorphism
\begin{equation}
  \label{eq:mult-iso}
  P_{-i}^i \otimes_{A^0} A \sira P_{-i}
\end{equation}
(it is surjective, splits by the projectivity of $P_{-i}$,
and any splitting is surjective, since it is an 
isomorphism in degree $i$).

Since the differential of $\mathcal{A}$ vanishes, we can view each
$P_{-i}$ as a dg $\mathcal{A}$-module with differential zero. The maps
$d_{-i}$ are then morphisms of dg modules.

On the graded $A$-module
\begin{equation*}
  K:= K(A):=\bigoplus_{n \in \DN} \{n\}P_{-n} = P_0 \oplus \{1\}P_{-1} \oplus \dots 
\end{equation*}
we define a differential $d_K$ by
\begin{multline}
  \label{eq:diffkoszul}
  d_K(p_0,p_{-1},p_{-2},\dots) \\ 
  := ((-1)^{-1}d_{-1}(p_{-1}),
  (-1)^{-2}d_{-2}(p_{-2}), (-1)^{-3}d_{-3}(p_{-3}),\dots).
\end{multline}
This defines an object $\mathcal{K}(A)=(K,d_K)$ of
$\dgMod(\mathcal{A})$. 
It can be seen as an iterated mapping cone of the morphisms
$(-1)^{-i}d_{-i}$ of dg modules (if there are only finitely many summands). 
(We chose the signs from the proof
of Theorem \ref{t:endo-koszul}.
If we omit all the signs $(-1)^{-i}$ in \eqref{eq:diffkoszul}, we obtain an isomorphic dg module.) 

We say that $P$ is of finite length if $P_{-j}=0$
for $j \gg 0$, and that $P$ has finitely generated components
if each $P_{-i}$ is a finitely generated $A$-module.
These two conditions mean precisely that $K$ is a finitely generated
$A$-module.

\begin{proposition}
  \label{p:koszul-dg-mod}
  Let $A$ be a Koszul ring, $\mathcal{A}=(A, d=0)$, and
  $P \sra A^0$ a resolution as above. Assume that $P$ is of finite
  length with finitely generated components.
  Then $\mathcal{K}(A)$ is in $\satzdgFilt^0$
  and an injective object of the heart $\heart$.
\end{proposition}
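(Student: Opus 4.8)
The plan is to treat the two assertions separately: first that $\mathcal{K}(A)$ lies in $\dgFilt^0$, and then that it is injective in $\heart$, the latter through the criterion of Proposition~\ref{p:injective-in-dgheart} that an object of $\heart$ is injective precisely when its $H^1$ vanishes.

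For the first assertion I would exhibit an explicit $\hat L$-filtration. The summand $\{n\}P_{-n}$ is a finitely generated (finite generation of the components) projective graded $A$-module, and since $P_{-n}=P_{-n}^nA$ is generated in degree $n$, the shift $\{n\}P_{-n}$ is generated in degree $0$; by Lemma~\ref{l:eegproj} it is therefore a finite direct sum of \emph{unshifted} copies of various $\hat L_x$, carrying the zero differential. Reading off \eqref{eq:diffkoszul}, the differential $d_K$ sends the summand $\{n\}P_{-n}$ into $\{n-1\}P_{-(n-1)}$, so the submodules $F_j:=\bigoplus_{n=0}^{j}\{n\}P_{-n}$ are dg submodules forming a finite (finite length of $P$) increasing filtration with subquotients $F_j/F_{j-1}\cong(\{j\}P_{-j},0)$. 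Each such subquotient is a direct sum of unshifted $\hat L_x$ with zero differential, hence splits as dg modules into its simple summands; refining accordingly yields an $\hat L$-filtration all of whose steps are unshifted, so that the monotonicity $l_1\geq\dots\geq l_n$ holds trivially with all $l_i=0$. Thus $\mathcal{K}(A)\in\dgFilt$, and since every subquotient is unshifted we get $\supp\ol{K}\subset\{0\}$, i.e.\ $\mathcal{K}(A)\in\dgFilt^0=\heart$ (Proposition~\ref{p:dgfiltzero-heart}).

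For injectivity I would compute $H(\mathcal{K}(A))$ outright. The key observation is that the total differential $d_K$ preserves the internal $A$-degree $m$: an element of $P_{-n}^{m}$ sits in cohomological degree $j=m-n$ of $\mathcal{K}(A)$, and $d_{-n}$ is degree-preserving for the grading of $P$, so $(K,d_K)$ splits as $\bigoplus_{m\geq 0}K_{(m)}$, where $K_{(m)}$ is the complex $P_{-m}^m\to\dots\to P_0^m$ placed in cohomological degrees $0,\dots,m$ with differentials the $d_{-n}$. This $K_{(m)}$ is exactly the degree-$m$ strand of the resolution $P\sra A^0$, re-indexed cohomologically, so its exactness gives $H^j(K_{(m)})=(A^0)^m$ for $j=m$ and $0$ otherwise. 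Since $A^0$ is concentrated in degree $0$, only $m=0$ contributes, whence $H(\mathcal{K}(A))=A^0$ sits entirely in cohomological degree $0$. In particular $H^1(\mathcal{K}(A))=0$, so $\mathcal{K}(A)$ is injective by Proposition~\ref{p:injective-in-dgheart}.

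The main obstacle is the degree bookkeeping in the last paragraph: one must carefully reconcile the three gradings — the homological index $n$ of the resolution, the internal $A$-grading $m$, and the cohomological dg-degree $j=m-n$ — and check that $d_K$ respects $m$ and realizes $K_{(m)}$ as the degree-$m$ part of $P\sra A^0$. Once this identification is in place, the vanishing of $H^1$ is immediate from exactness of the Koszul resolution, and everything else is routine, using Lemma~\ref{l:eegproj}, the filtration above, and, if one prefers, the equivalence \ref{enum:dgper-dplus}$\Leftrightarrow$\ref{enum:dgper-dgfilt} of Proposition~\ref{p:dgper-tfae} as an alternative route to $\mathcal{K}(A)\in\dgFilt$.
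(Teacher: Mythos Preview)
Your proof is correct and follows essentially the same route as the paper: the same filtration $F_j=\bigoplus_{n\le j}\{n\}P_{-n}$ is used to place $\mathcal{K}(A)$ in $\dgFilt^0$ (the paper cites \eqref{eq:mult-iso} where you cite Lemma~\ref{l:eegproj}, which amounts to the same thing), and injectivity is deduced from $H(\mathcal{K}(A))=A^0$ via Proposition~\ref{p:injective-in-dgheart}. The only difference is expository: the paper asserts $H(\mathcal{K}(A))=A^0$ in one line as an immediate consequence of $P\sra A^0$ being a resolution, whereas you unpack this by decomposing $K$ along the internal $A$-degree $m$ and identifying each $K_{(m)}$ with the degree-$m$ strand of the resolution---a helpful expansion of what the paper leaves implicit.
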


\begin{proof}
  Since $P$ has finite length, the filtration $0 \subset P_0 \subset
  P_0 \oplus \{1\}P_{-1} \subset \dots \subset K$ by
  $\mathcal{A}$-submodules stabilizes after finitely many steps. 
  Since all subquotients $\{i\}P_{-i}$ have differential zero and are
  finitely generated by their 
  degree zero part, they have an $\hat L$-filtration (use
  \eqref{eq:mult-iso}).
  Hence $\mathcal{K}(A)$ is in $\dgFilt^0 \subset \heart$.

  Since $P \sra A^0$ is a resolution, the cohomology
  $H(\mathcal{K}(A))$ is $A^0$, sitting in degree zero. Hence
  $\mathcal{K}(A)$ is an injective object of $\heart$
  by Proposition \ref{p:injective-in-dgheart}. 
\end{proof}

For a positively graded ring $A$ form the graded ring 
$E(A):=\Ext^\bullet_{\Mod(A)}(A^0,A^0)$ 
of self-extensions of the right
$A$-module $A^0=A/A^+$ (we consider self-extensions in $\Mod(A)$, not
in $\gMod(A)$). This ring is called the Koszul dual ring of $A$ (this
definition is slightly different from the definition in \cite{BGS}:
they consider self-extensions of the left $A$-module $A^0$).

\begin{theorem}
  \label{t:endo-koszul}
  Let $A$ be a Koszul ring, $\mathcal{A}=(A, d=0)$, and
  $P \sra A^0$ a resolution as above. Assume that $P$ is of finite
  length with finitely generated components.
  Then:
  \begin{enumerate}
  \item\label{en:kk-equi}  
    The functor $\DK:=\Hom_{\heart}(?,\mathcal{K}(A)):
    \heart \ra
    (\Modover{\End_{\heart}(\mathcal{K}(A))})^\opp$
    induces an equivalence 
    \begin{equation*}
      \DK:\heart \sira (\Mofover{\End_{\heart}(\mathcal{K}(A))})^\opp
    \end{equation*}
    between $\heart$ and the opposite category of the category of
    finitely generated (left) $\End_{\heart}(\mathcal{K}(A))$-modules.
  \item\label{en:endo-kdg} 
    $\End_{\heart}(\mathcal{K}(A))$ is
    isomorphic to $E(A)=\Ext^\bullet_{\Mod(A)}(A^0,A^0)$.
  \end{enumerate}
\end{theorem}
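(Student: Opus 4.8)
The plan is to deduce \ref{en:kk-equi} directly from Proposition~\ref{p:heart-module-category} and to prove \ref{en:endo-kdg} by first rewriting $\End_{\heart}(\mathcal{K}(A))$ as the self-endomorphisms of $A^0$ in the derived category, and then using Koszulity to collapse the cohomological and internal gradings into the single grading of $\dgMod(\mathcal{A})$.

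For \ref{en:kk-equi} I would apply Proposition~\ref{p:heart-module-category} with $I=\mathcal{K}(A)$. Proposition~\ref{p:koszul-dg-mod} already gives that $\mathcal{K}(A)$ lies in $\dgFilt^0\subset\heart$ and is injective (equivalently $H^1(\mathcal{K}(A))=0$), so hypothesis (a) holds. For hypothesis (b), I would observe that by \eqref{eq:diffkoszul} the differential $d_K$ vanishes on the bottom summand $P_0$ of $K=\bigoplus_n\{n\}P_{-n}$: the $P_0$-component never occurs in the output of $d_K$. Since $P_0$ is projective, generated in degree zero, and $\ol{P_0}=A^0$, the case $n=0$ of \eqref{eq:mult-iso} gives $P_0\cong A$, so $(P_0,0)$ realizes $\mathcal{A}$ as a dg submodule of $\mathcal{K}(A)$; this is exactly the sufficient condition recorded in Proposition~\ref{p:heart-module-category}(b). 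Proposition~\ref{p:heart-module-category} then yields the equivalence, the finiteness of $P$ and finite generation of its components ensuring that the target is the category of finitely generated modules.

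For \ref{en:endo-kdg} the first step is to note that $\heart$ is a full subcategory of $\dgPer\subset\dgHot$, so $\End_{\heart}(\mathcal{K}(A))=\End_{\dgHot}(\mathcal{K}(A))$. By Lemma~\ref{l:inclusions}, $\mathcal{K}(A)$ is homotopically projective, and the augmentation $\mathcal{K}(A)\ra A^0$ is a quasi-isomorphism because $H(\mathcal{K}(A))=A^0$ is concentrated in degree zero; hence $\mathcal{K}(A)$ is a homotopically projective resolution of the dg module $A^0=A/A^+$ and $\End_{\dgHot}(\mathcal{K}(A))=\End_{\dgDer(\mathcal{A})}(A^0)$. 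I would then compute this endomorphism ring explicitly as homotopy classes of degree-zero $A$-linear endomorphisms of $K$. Using \eqref{eq:mult-iso}, each such endomorphism is determined by its restriction to the degree-zero generators $\bigoplus_n P_{-n}^n$, and $d_K$ induces on these restrictions precisely the differential of the complex $\Hom_{\gMod(A)}(P_\bullet,A^0)$. The decisive input is Koszulity: since $P_{-n}$ is generated in internal degree $n$, a degree-zero graded map out of $P_{-n}$ can only reach the internal shift $A^0\{-n\}$, so only the diagonal groups $\Ext^n_{\gMod(A)}(A^0,A^0\{-n\})$ contribute. This is the grading collapse at work—the triangulated shift and the grading shift coincide in $\dgMod(\mathcal{A})$, so the degree-zero endomorphisms see every $\Ext$-degree at once—and the finiteness hypotheses let me pass from graded to ungraded self-extensions, giving $\bigoplus_n\Ext^n_{\gMod(A)}(A^0,A^0\{-n\})=\bigoplus_n\Ext^n_{\Mod(A)}(A^0,A^0)=E(A)$.

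The hard part will be upgrading this from an isomorphism of abelian groups to an isomorphism of rings: I would have to check that composition of dg endomorphisms of $\mathcal{K}(A)$ corresponds to the Yoneda product on $E(A)$, and in particular that the signs agree—this is exactly why the signs $(-1)^{-i}$ were built into \eqref{eq:diffkoszul}. The careful bookkeeping of the two collapsing gradings, the sign comparison in the product, and the justification of the graded-to-ungraded comparison of $\Ext$ from the finite-length, finitely-generated resolution are where the genuine work lies; everything else is formal.
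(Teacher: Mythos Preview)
Your argument for \ref{en:kk-equi} is identical to the paper's: invoke Proposition~\ref{p:koszul-dg-mod} for injectivity and observe that $P_0=\mathcal{A}$ is a dg submodule, then apply Proposition~\ref{p:heart-module-category}.

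For \ref{en:endo-kdg} your strategy is sound and close to the paper's, but the packaging differs in a way that matters for the ring structure you flag as ``the hard part.'' The paper avoids your detour through $\End_{\dgDer}(A^0)$ and $\Hom(\mathcal{K}(A),A^0)$ entirely: it works inside the ambient ring $\End_{\gMod(A)}(K)=\bigoplus_i\mathcal{R}_{-i}^i$ (where $\mathcal{R}_n=\complexHom_{\gMod(A)}(P,\{n\}P)$), shows that $\End_{\heart}(\mathcal{K}(A))=\End_{\dgMod}(\mathcal{K}(A))$ is the subring cut out by commutation with $d_K$, and independently shows that $E(A)=H(\complexEnd(v(P)))$ lands in the \emph{same} subspace because Koszulity forces $\mathcal{R}_{-i}^j=0$ for $j<i$, so $E^i(A)=\Ker(\mathcal{R}_{-i}^i\to\mathcal{R}_{-i}^{i+1})$. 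Both ring structures are then visibly the one inherited from $\End_{\gMod(A)}(K)$, so no separate Yoneda-versus-composition comparison is needed and the sign bookkeeping in \eqref{eq:diffkoszul} is absorbed automatically. Your route gets the same abelian-group identification but, by passing through $\Hom(\mathcal{K}(A),A^0)$, loses the ring structure and must recover it; the paper's route is shorter here.

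One expository slip: your sentence ``$d_K$ induces on these restrictions precisely the differential of the complex $\Hom_{\gMod(A)}(P_\bullet,A^0)$'' conflates endomorphisms of $K$ with maps to $A^0$. If you mean the cocycle condition for a dg endomorphism, the relevant differential is that of $\complexEnd(P)$, not of $\Hom(P,A^0)$; if instead you are silently replacing the target $\mathcal{K}(A)$ by $A^0$ via the quasi-isomorphism, say so explicitly, since that is exactly where the ring structure evaporates.
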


\begin{example}
  \label{ex:koszul-and-bgs}
  Let $A=\DC[X]$ be a polynomial ring with $X$ homogeneous of degree
  one. This is a Koszul ring with Koszul dual ring the ring of dual numbers
  $B:=E(A)=\DC[\varepsilon]/(\varepsilon^2)$.
  The resolution $P \sra A^0$ is given by $\dots \ra 0 \ra \{-1\}A
  \xra{X} A \sra \DC$, and $\mathcal{K}(A)$ is $A \oplus A$
  as a graded $A$-module with differential $\tzmat 0{-X}00$. 
  Theorem~\ref{t:endo-koszul} says that
  $\End_{\heart}(\mathcal{K}(A))$ is isomorphic to $B$ (which is of
  course easy to check, cf.\ Example \ref{ex:A-truncated-poly}), and
  that there is an equivalence 
  $\heart \sira (\Mofover{B})^\opp$.

  More generally, we could take $A=SV$ the symmetric algebra of some
  finite dimensional vector space $V$ (with all elements of $V$ in
  degree one). Then $E(A)=\bigwedge V^*$ is
  the exterior algebra on the dual space of $V$, and we obtain an
  equivalence
  $\heart \sira (\Mofover{\bigwedge V^*})^\opp$.
  \exampleend  
\end{example}

\begin{proof}
  Part \ref{en:kk-equi} follows from 
  Proposition~\ref{p:koszul-dg-mod}, the fact that 
  $\mathcal{A}=P_0$ is an $\mathcal{A}$-submodule of
  $\mathcal{K}(A)$ and Proposition~\ref{p:heart-module-category}.

For the proof of \ref{en:endo-kdg} we need the following general
construction. 
Let $S=(S, d^S)=(\dots \ra S_i \xra{d_i} S_{i+1} \ra \dots)$ 
and $T=(T, d^T)$
be complexes in some abelian category. Then
we define a complex of abelian groups $\complexHom(S,T)$ as follows:
Its $i$-th component is
\begin{equation*}
  \complexHom^i(S, T)
  =\prod_{s+t=i}\Hom(S_{-s}, T_t),
\end{equation*}
and its differential is given by 
\begin{equation*}
  df = d^T \comp f - (-1)^i f \comp d^S
\end{equation*}
for $f$ homogeneous of degree $i$. 
Note that $\complexEnd(S):=\complexHom(S,S)$ with the obvious composition becomes a dg
algebra.

  We now prove part \ref{en:endo-kdg}:
  Let $v: \gMod(A) \ra \Mod(A)$ be the 
  ``forgetting the grading''
  functor. 
  We denote the induced functor from the category of complexes in
  $\gMod(A)$ to that of complexes in $\Mod(A)$ by the same letter.
  
  If $M$ and $N$ are in $\gMod(A)$, the map
  \begin{align}
    \label{eq:hom-grading}
    \bigoplus_{n \in \DZ} \Hom_{\gMod(A)}(M, \{n\}N) & \ra \Hom_{\Mod(A)}(v(M), v(N))\\
    f=(f_n) & \mapsto \sum f_n \notag
  \end{align}
  is always injective. It is bijective if $M$ is finitely generated as
  an $A$-module. 

  Since $v(P) \sra v(A_0)$ is a projective resolution in $\Mod(A)$,
  the cohomology of $\mathcal{R}:=\complexEnd(v(P))$ is $E(A)$.
  The $i$-th component of $\mathcal{R}$ is  
  \begin{align*}
    \mathcal{R}^i
    & =
    \prod_{s \in \DZ}\Hom_{\Mod(A)}(v(P_{-s}), v(P_{-s+i})) \\
    & =
    \bigoplus_{s \in \DZ}\Hom_{\Mod(A)}(v(P_{-s}), v(P_{-s+i})) & &
    \text{$v(P)$ has finite length,}\\
    & \sila
    \bigoplus_{s, n \in \DZ} \Hom_{\gMod(A)}(P_{-s}, \{n\} P_{-s+i}) & &
    \text{$P_{-s}$ finitely gen., \eqref{eq:hom-grading},}\\
    & =
    \bigoplus_{n \in \DZ} \prod_{s \in \DZ}\Hom_{\gMod(A)}(P_{-s}, \{n\} P_{-s+i}) & &
    \text{$P$ has finite length,}\\
    & =
    \bigoplus_{n \in \DZ} \complexHom^i_{\gMod(A)}(P, \{n\} P). & &
  \end{align*}
  So if we define $\mathcal{R}_n = \complexHom_{\gMod(A)}(P,
  \{n\}P)$ for $n \in \DZ$,
  we get an isomorphism of complexes
  \begin{equation*}
    \mathcal{R} \sila \bigoplus_{n \in \DZ} \mathcal{R}_n.
  \end{equation*}
  Note that $H^i(\mathcal{R}_n) = \Ext^i_{\gMod(A)}(A_0, \{n\} A_0)$ vanishes if $i \not= -n$ 
  (Proof (cf.~\cite[Prop.~2.1.3]{BGS}): Even the complex $\complexHom_{\gMod(A)}(P, \{n\}A_0)$ vanishes in
  all degrees $\not= -n$). 
  Hence we obtain
  \begin{equation*}
    E^i(A) =H^i(\mathcal{R})\sila \bigoplus_{n\in \DZ}
    H^i(\mathcal{R}_n) = H^i(\mathcal{R}_{-i}).
  \end{equation*}
  In order to compute $H^i(\mathcal{R}_{-i})$, we observe that
  \begin{align*}
    \mathcal{R}_{-i}^j
    & = \prod_{s \in \DZ}\Hom_{\gMod(A)}(P_{-s}, \{{-i}\}P_{-s+j}) \\
    & = \bigoplus_{s \in \DZ}\Hom_{\gMod(A)}(P_{-s}, \{{-i}\}P_{-s+j}) \\
  \end{align*}
  vanishes if $j < i$, since $P_{-s}=P_{-s}^sA$ is generated in degree
  $s$ and
  $(\{{-i}\}P_{-s+j})^s=P^{s-i}_{-s+j}$ vanishes for $s-i< s-j$.
  This implies 
  \begin{equation*}
    E^i(A)
    =H^i(\mathcal{R}_{-i})
    =\Kern
    (\mathcal{R}_{-i}^i \ra \mathcal{R}_{-i}^{i+1}),
  \end{equation*}
  so $E(A)=\bigoplus_{i \in \DZ} E^i(A)$ is a subset of
  $\bigoplus_{i \in \DZ} \mathcal{R}_{-i}^i$. 
  On the other hand we have
  \begin{align*}
    \End_{\heart}(\mathcal{K}(A)) 
    & = \End_{\dgMod(\mathcal{A})}(\mathcal{K}(A)) \\
    & \subset \End_{\gMod(A)}(K(A)) \\
    & = \bigoplus_{i,s \in \DZ}
    \Hom_{\gMod(A)}(\{s\}P_{-s}, \{{s-i}\}P_{-s+i}) \\
    & =\bigoplus_{i \in \DZ}\mathcal{R}^i_{-i}.
  \end{align*}
  In order to identify these two subsets of $\bigoplus_{i \in
    \DZ}\mathcal{R}^i_{-i}$, let
  $f=(f^i)_{i \in \DZ}=(f^i_s)_{i, s \in \DZ}$ be an element of
  $\bigoplus_{i \in \DZ} \mathcal{R}_{-i}^i$ with 
  \begin{equation*}
    f^i_s \in 
    \Hom_{\gMod(A)}(\{s\}P_{-s}, \{{s-i}\}P_{-s+i}).
  \end{equation*}
  Then it is easy to check that
  $f \in E(A)$ if and only if
  \begin{equation*}
    d_{-s+i} \comp f_s^i - (-1)^i f_{s-1}^i \comp d_{-s} = 0 \quad \text{for all $i$, $s \in \DZ$,}
  \end{equation*}
  if and only if $f \in \End_{\heart}(\mathcal{K}(A))$.
\end{proof}

\begin{remark}
  Since the graded $A$-module $K(A)$ is a (finite) direct sum,
  its endomorphism ring $\End_{\gMod(A)}(K(A))$ consists of matrices and can be equipped with
  a ``diagonal'' $\DZ$-grading such that the piece 
  \begin{equation*}
    \Hom_{\gMod(A)}(\{s\}P_{-s},\{t\}P_{-t})
  \end{equation*}
  has degree $s-t$. 
  These matrices are not upper triangular in general.
  The particular form of $\mathcal{K}(A)$ implies that $\End_{\heart}(\mathcal{K}(A)) \subset
  \End_{\gMod(A)}(K(A))$ 
  is a graded subalgebra.
  In fact it consists of upper triangular matrices, i.\,e.\ it is
  positively (= non-negatively) graded:
  This can be directly deduced from the proof of 
  Theorem~\ref{t:endo-koszul}, since the extension algebra $E(A)$ is
  positively graded. A more general argument rests upon 
  Remark \ref{rem:socle-filt}:
  The explicit description of the socle filtration of
  an object of $\dgFilMod$ there and the
  particular form of $\mathcal{K}(A)$ show that 
  \begin{equation*}
    \soc_{i+1} \mathcal{K}(A) = P_0 \oplus \dots \oplus \{i\}P_{-i}.
  \end{equation*}
  Now endomorphisms of $\mathcal{K}(A)$ in $\heart$ respect the socle
  filtration and are therefore upper triangular.
  \remarkend
\end{remark}

\begin{remark}
  \label{rem:koszul-shadow}
  A. Beilinson, V. Ginzburg, and W. Soergel prove an equivalence of triangulated
  categories
  \begin{equation*}
    K:D^b(\gmod(B)) \sira D^b(\gmod(A))
  \end{equation*}
  for $B$ a Koszul ring satisfying some finiteness conditions and
  $A=E(B)$ its Koszul dual ring (see
  \cite[Thm.~1.2.6]{BGS}; we have adapted their statement to our setting, in
  particular we use our definition of the Koszul dual ring using right
  modules). The finiteness conditions are: $B$ is a
  finitely generated $B^0$-module from the right and from the left,
  $A=E(B)$ is right Noetherian,
  and (this condition seems to be missing in \cite{BGS}) 
  the right $B$-module $\leftidx{^*}{B}{}:=\Hom_{\Mod(B^0)}(B,B^0)$ is
  finitely generated); here $D^b(\gmod(B))$ is the 
  bounded derived category of the category of finitely generated
  graded left $B$-modules, and similarly for $D^b(\gmod(A))$.
 
  The standard t-structure on $D^b(\gmod(B))$ with heart $\gmod(B)$
  corresponds (under this equivalence) to some non-standard
  t-structure on $D^b(\gmod(A))$. Hence $\gmod(B)$ is equivalent to the
  heart of this non-standard t-structure.

  Now assume that the Koszul ring $A=E(B)$ satisfies the finiteness
  conditions of Theorem~\ref{t:endo-koszul} and consider
  $\mathcal{A}=(A,d=0)$.
  (For example, one could take the exterior algebra
  $B=\bigwedge V$ of a finite dimensional vector space $V$, and
  $A=SV^*$ the symmetric algebra on the dual space of $V$.) There is a
  natural forgetful functor 
  $D^b(\gmod(B)) \ra D^b(\Mof(B))$ (induced by $\gmod(B)
  \ra \Mof(B)$), and there is a
  functor $D^b(\gmod(A)) \ra \dgPerDer(\mathcal{A})$. From the above
  equivalence between the hearts it is reasonable to expect (as a
  shadow of Koszul duality) an
  equivalence 
  $\Mof(B) \cong \heart$,
  where $\heart$ is the heart of our t-structure on
  $\dgPerDer(\mathcal{A})$. 
  Given our assumptions on $B$ and $A$, this is in fact true:
  Theorem~\ref{t:endo-koszul} yields an equivalence $\heart \sira 
  (\Mofover{B})^\opp$ (note that $E(A)=E(E(B))=B$), and the functor
  $\Hom_{\Mof(B)}(?, \leftidx{^*}{B}{}): \Mof(B) \ra (\Mofover{B})^\opp$ is
  an equivalence, since $\leftidx{^*}{B}{}$
  is an
  injective generator of $\Mof(B)$ and
  $\End_{\Mof(B)}(\leftidx{^*}{B}{})=B$ (use
  $K(\leftidx{^*}{B}{})=A^0$ and some arguments from the proof of Theorem~\ref{t:endo-koszul}). 
  \remarkend
\end{remark}

\bibliographystyle{alpha}
\def\cprime{$'$} \def\cprime{$'$} \def\cprime{$'$} \def\cprime{$'$}

\end{document}